\begin{document}

\markboth{Alrachid and Leli\`evre}{Convergence of an ABF method: Variance reduction by Helmholtz projection}

%%%%%%%%%%%%%%%%%%% Publisher's Area please ignore %%%%%%%%%%%%%%%%%%%%%%%
%
\catchline{}{}{}{}{}
%
%%%%%%%%%%%%%%%%%%%%%%%%%%%%%%%%%%%%%%%%%%%%%%%%%%%%%%%%%%%%%%%%%%%%%%%%%%

\title{Long-time convergence of an adaptive biasing force method:\\
  Variance reduction by Helmholtz projection}

\author{HOUSSAM ALRACHID}

\address{\'Ecole des Ponts ParisTech, Universit\'e Paris Est, 6-8 Avenue Blaise Pascal, Cit\'e Descartes \\
Marne-la-Vall\'ee,  F-77455 ,
France\\
alrachih@cermics.enpc.fr}
\address{Universit\'{e} Libanaise, Ecole Doctorale des Sciences et de Technologie, Campus Universitaire de Rafic Hariri\\
 Hadath, Lebanon\\
 houssam.alrachid@hotmail.com}

\author{TONY LELIEVRE}

\address{\'Ecole des Ponts ParisTech, Universit\'e Paris Est, 6-8 Avenue Blaise Pascal, Cit\'e Descartes \\
Marne-la-Vall\'ee,  F-77455 ,
France\\
lelievre@cermics.enpc.fr}
\maketitle

\begin{abstract}
In this paper, we propose an improvement of the adaptive biasing force (ABF) method, by projecting the estimated {\em mean force} onto a gradient. The associated stochastic process satisfies a non linear stochastic differential equation. Using entropy techniques, we prove exponential convergence to the stationary state of this stochastic process. We finally show on some numerical examples that the variance of the approximated {\em mean force} is reduced using this technique, which makes the algorithm more efficient than the standard ABF method.
\end{abstract}

\keywords{Adaptive biasing force; Helmholtz projection; Free energy; Variance reduction.}

\section{Introduction}	

\subsection{{\em The model}}
  Let us consider the {\em Boltzmann-Gibbs} measure :
\begin{equation}\label{gib}
\mu(dx)=Z_{\mu}^{-1}e^{-\beta V(x)}dx,
\end{equation}
where $x\in \mathcal{D}^N$ denotes the position of $N$ particles in $\mathcal{D}$. The space $\mathcal{D}$ is called the configuration space. One should think of $\mathcal{D}$ as a subset of $\mathbb{R}^n$, or the $n$-dimensional torus $\mathbb{T}^n$ (where $\mathbb{T}=\mathbb{R}/\mathbb{Z}$ denotes the one dimensional torus). The potential energy function $V:\mathcal{D} \longrightarrow \mathbb{R}$ associates to the positions of the particles $x\in \mathcal{D}$ its energy $V(x)$. In addition, $Z_{\mu}=\displaystyle \int_{\mathcal{D}} e^{-\beta V(x)}dx$ (assumed to be finite) is the normalization constant and $\beta=1/(k_{B}T)$ is proportional to the inverse of the temperature $T$, $k_{B}$ being the Boltzmann constant.

 The probability measure $\mu$ is the equilibrium measure sampled by the particles in the canonical statistical ensemble. A  typical dynamics that can be used to sample this measure is the {\em Overdamped Langevin Dynamics}:
\begin{equation}\label{over}
dX_t=-\nabla V(X_t)dt+\sqrt{\frac{2}{\beta}}dW_t,
\end{equation}
where $X_t\in \mathcal{D}^N$ and $W_t$ is a $Nn$-dimensional standard Brownian motion. Under loose assumptions on $V$, the dynamics $(X_t)_{t\geq 0}$ is ergodic with respect to the measure $\mu$, which means: for any smooth test function $\varphi$,
\begin{equation} \label{ergo}
\displaystyle\lim _{T\rightarrow +\infty}\frac{1}{T}\int_0^T\varphi(X_t)dt=\int\varphi d\mu,
\end{equation}
 i.e. trajectory averages converge to canonical averages.
\subsection{{\em Metastability, reaction coordinate and free energy}}\label{meta}

In many cases of interest, there exists regions of the configuration space where the dynamics \eqref{over} remains trapped for a long time, and jumps only occasionally to another region, where it again remains trapped for a long time. This typically occurs when there exist high probability regions separated by very low probability areas. The regions where the process $(X_t)_{t \ge 0} $ remains trapped for very long times, are called metastable. 

Because of the metastability, trajectorial averages \eqref{ergo} converge very slowly to their ergodic limit. Many methods have been proposed to overcome this difficulty, and we concentrate here on the Adaptive Biasing Force (denoted ABF) method (see [\refcite{dav:01,hen:04}]). In order to introduce the ABF method, we need another ingredient: a {\em reaction coordinate} (also known as an order parameter), $\xi=(\xi_1,...,\xi_m):\mathcal{D}\longrightarrow \mathbb{R}^m$, $\xi(x)=z$, where $m< nN$. Typically, in \eqref{over}, the time-scale for the dynamics on $\xi(X_t)$ is larger than the time-scale for the dynamics on $X_t$ due to the metastable states, so that $\xi$ can be understood as a function such that $\xi(X_t)$ is in some sense a slow variable compared to $X_t$. We can say that $\xi$ describes the metastable states of the dynamics associated to the potential $V$. 
For a given configuration $x$, $\xi(x)$ represents some macroscopic information. For example, it could represent angles or bond lengths in a protein, positions of defects in a material, etc ... 
 In any case, it is meant to be a function with values in a small dimensional space (i.e. $m\leq 4$), since otherwise, it is difficult to approximate accurately the associated free energy which is a scalar function defined on the range of $\xi$ (see equation \eqref{free0} below). The choice of a "good" reaction coordinate is a highly debatable subject in the literature. One aim of the mathematical analysis conducted here or in previous papers (see for example [\refcite{tony:08}]) is to quantify the efficiency of numerical algorithms once a reaction coordinate has been chosen.
 
The image of the measure $\mu$ by $\xi$ is defined by:
\begin{equation}\label{imxi}
 \xi*\mu:= \displaystyle\exp(-\beta A(z))dz , 
\end{equation} 
 where $A$ is the so-called free energy associated with the reaction coordinate $\xi$. By the co-formula  (see [\refcite{tony:08}], Appendix A), the following formula for the free energy can then be obtained: up to an additive constant,
\begin{equation}\label{free0}
A(z)=-\beta^{-1}\,{\rm ln}(Z_{\Sigma_{z}}),
\end{equation}
where $Z_{\Sigma_{z}}=\displaystyle\int_{\Sigma_{z}}e^{-\beta V(x)}\delta_{\xi(x)-z}(dx)$, the submanifold $\Sigma_{z}$ is defined by
 $$\Sigma_{z}=\{x=(x_1,...,x_n)\in\mathcal{D}\,|\,\xi(x)=z\},$$
  and $\delta_{\xi(x)-z}(dx)$ represents a measure with support $\Sigma_{z}$, such that $\delta_{\xi(x)-z}(dx)dz=dx$ (for further details on delta measures, we refer to [\refcite{tony:10}], Section~3.2.1). We assume henceforth that $\xi$ and $V$ are such that $Z_{\Sigma_{z}}<\infty$, for all $z\in \mathbb{R}^m$.

The idea of free energy biasing methods, such as the adaptive biasing force method (see [\refcite{dav:01,hen:04}]) or the Wang Landau algorithm (see [\refcite{wan:01}]), is that, if $\xi$ is well chosen, the dynamics associated to $V-A\circ\xi$ is less metastable than the dynamics associated to $V$. Indeed, from the definition of the free energy $\eqref{imxi}$, for any compact subspace $\mathcal{M}\subset \mathbb{R}^m$, the image of $\tilde{Z}^{-1}\mathrm{e}^{(-\beta(V-A\circ \xi)(x))}1_{\xi(x)\in\mathcal{M}}$ by $\xi$ is the uniform law $\displaystyle\frac{1_{\mathcal{M}}}{|\mathcal{M}|}$, where $\tilde{Z}=\displaystyle\int_{Z_{\Sigma_{z}}}\mathrm{e}^{(-\beta(V-A\circ \xi)(x))}1_{\xi(x)\in\mathcal{M}}$ and $|\mathcal{M}|$ denotes the Lebesgue measure on $\mathcal{M}$. The uniform law is typically easier to sample than the original measure $\xi*\mu$. If the function $\xi$ is well chosen (i.e. if the dynamics in the direction orthogonal to $\xi$ is not too metastable), the free energy can be used as a biasing potential to accelerate the sampling of the dynamics (see [\refcite{tony:08}]). The difficulty is of course that the free energy $A$ is unknown and difficult to approximate using the original dynamics \eqref{over} because of metastability. Actually, in many practical cases, it is the quantity of interest that one would like to approximate by molecular dynamics simulations (see [\refcite{Chi:07,tony:10}]). The principle of adaptive biasing methods is thus to approximate $A$ (or its gradient) on the fly in order to bias the dynamics and to reduce the metastable features of the original dynamics \eqref{over}.
 
\subsection{\textit{Adaptive biasing force method (ABF)}}
In order to introduce the ABF method, we need a formula for the derivatives of $A$. The so called {\em mean force} $\nabla A(z)$, can be obtained from \eqref{free0} as (see [\refcite{tony:10}], Section~3.2.2):
\begin{equation}\label{mean}
\nabla A(z)=\displaystyle \int_{\Sigma_{z}}f(x)d\mu_{\Sigma_{z}},
\end{equation}
where $d\mu_{\Sigma_{z}}$ is the probability measure $\mu$ conditioned to a fixed value $z$ of the reaction coordinate:
\begin{equation}\label{codms}
d\mu_{\Sigma_{z}}=Z_{\Sigma_{z}}^{-1}\mathrm{e}^{-\beta V(x)}\delta_{\xi(x)-z}(dx),
\end{equation}
and $f$ is the so-called {\em local mean force} defined by
\begin{equation}\label{cmf}
f_i=\displaystyle \sum _{j=1}^m G_{i,j}^{-1}\nabla \xi_j.\nabla V-\beta^{-1} {\rm div}\left(\sum _{j=1}^m G_{i,j}^{-1}\nabla \xi_j\right),
\end{equation}
where $G=(G_{i,j})_{i,j=1,...,m}$, has components $G_{i,j}=\nabla \xi_i\cdot\nabla \xi_j$. 
 This can be rewritten in terms of conditional expectation as: for a random variable $X$ with law $\mu$ (defined by \eqref{gib}),
\begin{equation}
\nabla A(z)=\mathbb{E}(f(X)|\xi(x)=z).
\end{equation}

We are now in position to introduce the standard adaptive biasing force (ABF) technique, applied to the overdamped Langevin dynamics \eqref{over}:   
  \begin{equation}\label{abf0}
    \left \lbrace
  \begin{aligned}
  dX_t &=- \left ( \nabla V-\sum_{i=1}^mF_t^i\circ\xi\nabla\xi_i+ \nabla(W\circ \xi) \right )(X_t) dt+\sqrt{2\beta^{-1}}dW_t,\\
F_t^i(z)&=\displaystyle\mathbb{E}[f_i(X_t) |\xi(X_t)=z],\,i=1,...,m,
  \end{aligned}
  \right.
\end{equation}	
where $f$ is defined in \eqref{cmf}. Compared with the original dynamics \eqref{over}, two modifications have been made to obtain the ABF dynamics \eqref{abf0}:
\begin{enumerate}
\item First and more importantly, the force $\displaystyle\sum_{i=1}^mF_t^i\circ\xi\nabla\xi_i$ has been added to the original force $-\nabla V$. At time $t$, $F_t$ approximates $\nabla A$ defined in \eqref{mean}.
\item Second, a potential $W\circ\xi$ has been added. This is actually needed in the case when $\xi$ lives in an unbounded domain. In this case, a so-called confining potential $W$ is introduced so that the law of $\xi(X_t)$ admits a longtime limit $Z_W^{-1}\mathrm{e}^{-\beta W(z)}dz$ (see Remark~\ref{conlaw} at the end of Section~\ref{pr}), where $Z_W=\displaystyle\int_{\mathrm{Ran}(\xi)}\mathrm{e}^{-\beta W}$ is assumed to be finite. When $\xi$ is living in a compact subspace of $\mathbb{R}^m$, there is no need to introduce such a potential and the law of $\xi(X_t)$ converges exponentially fast to the uniform law on the compact subspace (as explained in Section~\ref{meta} and Section~\ref{pr}). Typically, $W$ is zero in a chosen compact subspace $\mathcal{M}$ of $\mathbb{R}^m$ and is harmonic outside $\mathcal{M}$. For example, in dimension two, suppose that $\xi=(\xi_1,\xi_2)$ and $\mathcal{M}=[\xi_{min},\xi_{max}]\times[\xi_{min},\xi_{max}]$, then $W$ can be defined as: 
\begin{equation}\label{conf}
W(z_1,z_2)=\displaystyle\sum_{i=1}^2 1_{z_i\geq \xi_{max}}(z_i-\xi_{max})^2+\sum_{i=1}^21_{z_i\leq \xi_{min}}(z_i-\xi_{min})^2.
\end{equation}
\end{enumerate}
It is proven in [\refcite{tony:08}] that, under appropriate assumptions, $F_t$ converges exponentially fast to $\nabla A$. In addition, for well chosen $\xi$, the convergence to equilibrium for \eqref{abf0} is much quicker than for \eqref{over}. This can be quantified using entropy estimates and Logarithmic Sobolev Inequalities, see [\refcite{tony:08}].

Notice that even though $F_t$ converges to a gradient ($\nabla A$), there is no reason why $F_t$ would be a gradient at time $t$. In this paper, we propose an alternative method, where we approximate $\nabla A$, at any time $t$, by a gradient denoted $\nabla A_t$. The gradient $\nabla A_t$ is defined as the Helmholtz projection of $F_t$. One could expect improvements compared to the original ABF method since the variance of $\nabla A_t$ is then smaller than the variance of $F_t$ (since $A_t$ is a scalar function). Reducing the variance is important since the conditional expectation in \eqref{abf0} is approximated by empirical averages in practice.

\subsection{\textit{Projected adaptive biasing force method (PABF)}} 
A natural algorithm to reconstruct $A_t$ from $F_t$, consists in solving the following Poisson problem:
\begin{equation}\label{poi0}
\Delta A_t={\rm div}F_t\quad \mbox{on}\,\mathcal{M},
\end{equation}
with appropriate boundary conditions depending on the choice of $\xi$ and $\mathcal{M}$. More precisely, if $\xi$ is periodic and $\mathcal{M}$ is the torus $\mathbb{T}^m$, then we are working with periodic boundary conditions. If $\xi$ is with values in $\mathbb{R}^m$ and $\mathcal{M}$ is a bounded subset of $\mathbb{R}^m$, then Neumann boundary conditions are needed (see Remark \ref{neu} at the end of Section~\ref{conv}). To solve this Poisson problem, standard methods such as finite difference methods, finite element methods, spectral methods or Fourier transforms can be used. Note that \eqref{poi0} is the Euler equation associated with the minimization problem:
 \begin{equation}\label{min}
A_t=\displaystyle \mathop{\mbox{argmin}}_{g\in H^1(\mathcal{M})/\mathbb{R}}\int_{\mathcal{M}} |\nabla g-F_t|^2,
\end{equation}
where $\displaystyle H^1(\mathcal{M})/\mathbb{R}=\left\{ g\in H^1(\mathcal{M})\,|\,\int_{\mathcal{M}}g=0\right\}$ denotes the subspace of $H^1(\mathcal{M})$ of zero average functions.
In view of \eqref{min}, $A_t$ can be interpreted as the function such that its gradient is the closest to $F_t$.
Solving \eqref{poi0} amounts to computing the so-called {\em Helmholtz-Hodge} decomposition of the vector field $F_t$ as (see [\refcite{gira:86}], Section~3):
 \begin{equation}\label{helm0}
F_t=\nabla A_t+R_t,\quad \mbox{on}\,\mathcal{M},
\end{equation}
where $R_t$ is a divergence free vector field.

Finally, the {\em projected ABF dynamics} we  propose  to  study  is  the  following  non linear stochastic differential equation:
\begin{equation}\label{ABF}
\left\lbrace
\begin{aligned}
dX_t&=-\nabla(V-A_t\circ\xi+W\circ \xi)(X_t)dt+\!\sqrt{2\beta^{-1}}dW_t,\\
\Delta A_t&={\rm div}F_t\quad \mbox{on}\,\mathcal{M}, \mbox{ with appropriate boundary conditions},\\
F_t^i(z)&=\displaystyle\mathbb{E}[f_i(X_t)|\xi(X_t)=z],\,i=1,...,m.
\end{aligned}
\right.
\end{equation}	
Compared with the standard ABF dynamics \eqref{abf0}, the only modification is that the mean force $F_t$ is replaced by $\nabla A_t$, which is meant to be an approximation of $\nabla A$ at time $t$.

The main theoretical result of this paper is that $A_t$ converges exponentially fast to the free energy $A$ in $\mathcal{M}$ (at least in a specific setting and for a slightly modified version of \eqref{ABF}, see Section~\ref{lcpa} for more details). Moreover, we illustrate numerically this result on a typical example. From a numerical point of view, the interest of the method is that the variance of the projected estimated mean force (i.e. $\nabla A_t$) is smaller than the variance of the estimated mean force (i.e. $F_t$). We observe numerically that this variance reduction enables a faster convergence to equilibrium for PABF compared with the original ABF.

The paper is organized as follows. In Section~\ref{lcpa}, the longtime convergence of the projected ABF method is proven. Section~\ref{num} is devoted to a numerical illustration of the interest of the projected ABF compared to the standard ABF approach. Finally, the proofs of the results presented in Section~\ref{lcpa} are provided in Section~\ref{pro}.

 \section{Longtime convergence of the projected ABF method}\label{lcpa}
%$H^1_{\varphi}(\mathbb{T}^2)=\{f\in L^2_{\varphi}(\mathbb{T}^2)\,|\,\partial_i f\in L^2_{\varphi}(\mathbb{T}^2),i=0,...,m \}$ denotes the weighted $H^1(\mathbb{T}^2)$ space.
 For the sake of simplicity, we assume in this Section that $\mathcal{D}=\mathbb{T}^n$ and that $\xi(x)=(x_1,x_2)$. Then $\xi$ lives in the compact space $\mathcal{M}=\mathbb{T}^2$ and we therefore take $W=0$. The free energy can be written as: 
\begin{equation}\label{free}
A(x_1,x_2)=-\beta^{-1}\,{\rm ln}(Z_{\Sigma_{(x_1,x_2)}}),
\end{equation}
where $Z_{\Sigma_{(x_1,x_2)}}=\displaystyle\int_{\Sigma_{(x_1,x_2)}}e^{-\beta V(x)}dx_3...dx_n$ and $\Sigma_{(x_1,x_2)}=\{x_1,x_2\}\times \mathbb{T}^{n-2}$. The mean force becomes:
\begin{equation}\label{meanf}
\nabla A(x_1,x_2)=\displaystyle \int_{\Sigma_{(x_1,x_2)}}f(x)d\mu_{\Sigma_{(x_1,x_2)}},
\end{equation}
where $f=(f_1,f_2)=(\partial_1 V,\partial_2 V)$ and the conditional probability measure $d\mu_{\Sigma_{(x_1,x_2)}}$ is:
$$d\mu_{\Sigma_{(x_1,x_2)}}=Z_{\Sigma_{(x_1,x_2)}}^{-1} {\rm e}^{-\beta V}dx_3...dx_n.$$
 Finally, the vector field $F_t(x_1,x_2)$ writes $\displaystyle \int_{\Sigma_{(x_1,x_2)}}fd\mu_{\Sigma_{(x_1,x_2)}}(t,.)$, or equivalently: 
 $$F_t^i(x_1,x_2)=\mathbb{E}\big(\partial_iV(X_t)|\xi(X_t)=(x_1,x_2)\big),\,i=1,2.$$

 \subsection{Helmholtz projection}\label{he}
In section \ref{heldec}, weighted Helmholtz-Hodge decomposition of $F_t$ is presented. In section \ref{minpb}, the associated minimization problem and projection operator are introduced.

  Let us first fix some notations. For $x\in\mathbb{R}^n$ and $1\leq i<j\leq n$, $x_i^j$ denotes the vector $(x_i,x_{i+1},...,x_j)$ and $dx_i^j$ denotes $dx_i\,dx_{i+1}\,...\,dx_j$. Moreover, $\nabla_{x_1^2}$, ${\rm div}_{x_1^2}$ and $\Delta_{x_1^2}$ represent respectively the gradient, the divergence and the laplacian in dimension two for the first two variables $(x_1,x_2)$. Likewise, $\nabla_{x_3^n}=(\partial_{3},...,\partial_n)^T$ represents the gradient vector starting from the third variable of $\mathbb{R}^n$.
  \subsubsection{\textit{Helmholtz decomposition}}\label{heldec}
The space $\mathbb{T}^2$ is a bounded and connected space. For any smooth positive probability density function $\varphi:\mathbb{T}^2\rightarrow \mathbb{R}$, let us define the weighted Hilbert space: $L^2_{\varphi}(\mathbb{T}^2)=\{f:\mathbb{T}^2\rightarrow \mathbb{R},\,\int_{\mathbb{T}^2}|f|^2\varphi<\infty\}$. Let us also introduce the Hilbert space $H_{\varphi}({\rm div};\mathbb{T}^2)=\{g\in L^2_{\varphi}(\mathbb{T}^2)\times L^2_{\varphi}(\mathbb{T}^2),\,{\rm div}_{x_1^2}( g)\in L^2(\mathbb{T}^2)\}$. It is well-known that any vector field $F_t:\mathbb{T}^2\rightarrow \mathbb{R}^2$ $\in\,H_1({\rm div},\mathbb{T}^2)$ can be written (see [\refcite{gira:86}], Section~3 for example) as (Helmholtz decomposition): $F_t=\nabla_{x_1^2} A_t+R_t$, where $R_t$ is a divergence free vector field. We will need a generalization of the standard Helmholtz decomposition to the weighted Hilbert spaces $L^2_{\varphi}(\mathbb{T}^2)$ and $H_{\varphi}({\rm div};\mathbb{T}^2))$:
\begin{equation}\label{hel}
F_t\varphi=\nabla_{x_1^2} (A_t)\varphi+R_t,
\end{equation}
s.t. ${\rm div}_{x_1^2}(R_t)=0$. This weighted Helmholtz decomposition is required to simplify calculations when studying the longtime convergence (see Remark~\ref{simpdi} in Section~\ref{difprf} for more details). Recall the space: $H^1(\mathbb{T}^2)/\mathbb{R}=\left\{ g\in H^1(\mathbb{T}^2)\,|\,\int_{\mathbb{T}^2}g=0\right\}$. The function $A_t$ is then the solution to the following problem: 
\begin{equation}\label{sca}
\displaystyle\int_{\mathbb{T}^2}\nabla_{x_1^2} A_t\cdot\nabla_{x_1^2} g\,\varphi=\int_{\mathbb{T}^2}F_t\cdot\nabla_{x_1^2} g\,\varphi,\quad \forall g\in H^1(\mathbb{T}^2)/\mathbb{R},
\end{equation}
which is the weak formulation of the Poisson problem:
\begin{equation} \label{poi}
{\rm div}_{x_1^2}(\nabla_{x_1^2} A_t\varphi(t,.))={\rm div}_{x_1^2}(F_t\varphi(t,.)),
\end{equation}
with periodic boundary conditions. Using standard arguments (Lax-Milgram theorem), it is straight forward to check that \eqref{sca} admits a unique solution $A_t\in H^1(\mathbb{T}^2)/\mathbb{R}$. 
%Note that $H^1(\mathbb{T}^2)/\mathbb{R}$ is a Hilbert space, equipped with the quotient norm:
%$$||\dot{v}||_{H^1(\mathbb{T}^2)/\mathbb{R}}=\displaystyle \inf_{v\in \dot{v}}||v||_{1,\mathbb{T}^2}\sim|v|_{1,\mathbb{T}^2}.$$

  \subsubsection{\textit{Minimization problem and projection on a gradient}}\label{minpb}
\begin{proposition}\label{propmin}
Suppose that $F_t \in H_{\varphi}({\rm div}; \mathbb{T}^2)$. Then for any smooth positive probability density function $\varphi$, the equation \eqref{sca} is the Euler Lagrange equation associated with the following minimization problem:\\
 \begin{equation}\label{minw}
A_t=\displaystyle \min_{h\in  H^1(\mathbb{T}^2)/\mathbb{R}}\int_{\mathbb{T}^2} |\nabla_{x_1^2} h-F_t|^2\varphi= \min_{h\in  H^1(\mathbb{T}^2)/\mathbb{R}}||\nabla_{x_1^2} h-F_t||^2_{L^2_{\varphi}(\mathbb{T}^2)}.
\end{equation}
Furthermore, $A_t$ belongs to $H^2(\mathbb{T}^2)$.
\end{proposition}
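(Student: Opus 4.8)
The plan is to recognise \eqref{minw} as a coercive quadratic minimization problem over the Hilbert space $H^1(\mathbb{T}^2)/\mathbb{R}$, to derive \eqref{sca} as its first--order optimality condition, and then to bootstrap the regularity of the minimizer using elliptic regularity for the Laplacian on the torus.

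I would begin by exploiting the compactness of $\mathbb{T}^2$ together with the smoothness and positivity of $\varphi$: there exist $0<c_\varphi\le C_\varphi<\infty$ with $c_\varphi\le\varphi\le C_\varphi$ on $\mathbb{T}^2$, and $\nabla_{x_1^2}\varphi$ is bounded. Consequently the $L^2_\varphi(\mathbb{T}^2)$ norm is equivalent to the flat $L^2(\mathbb{T}^2)$ norm, the hypothesis $F_t\in H_\varphi({\rm div};\mathbb{T}^2)$ gives in particular $F_t\in L^2(\mathbb{T}^2)^2$ and ${\rm div}_{x_1^2}F_t\in L^2(\mathbb{T}^2)$, and the functional $J(h):=\|\nabla_{x_1^2}h-F_t\|^2_{L^2_\varphi(\mathbb{T}^2)}$ is finite on $H^1(\mathbb{T}^2)/\mathbb{R}$. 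Writing $J(h)=a(h,h)-2\ell(h)+\|F_t\|^2_{L^2_\varphi(\mathbb{T}^2)}$ with $a(h,g)=\int_{\mathbb{T}^2}\nabla_{x_1^2}h\cdot\nabla_{x_1^2}g\,\varphi$ and $\ell(g)=\int_{\mathbb{T}^2}F_t\cdot\nabla_{x_1^2}g\,\varphi$, the Poincar\'e--Wirtinger inequality on $H^1(\mathbb{T}^2)/\mathbb{R}$ (so that $\|\nabla_{x_1^2}\cdot\|_{L^2}$ is an equivalent norm there) together with the two--sided bound on $\varphi$ makes $a$ continuous and coercive and $\ell$ continuous. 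The Lax--Milgram theorem --- or, equivalently, the direct method using strict convexity, coercivity and weak lower semicontinuity of $J$ --- then yields a unique minimizer $A_t\in H^1(\mathbb{T}^2)/\mathbb{R}$; differentiating $s\mapsto J(A_t+sg)=J(A_t)+2s\big(a(A_t,g)-\ell(g)\big)+s^2a(g,g)$ at $s=0$ shows that minimality is equivalent to $a(A_t,g)=\ell(g)$ for all $g\in H^1(\mathbb{T}^2)/\mathbb{R}$, which is exactly \eqref{sca}. This establishes that \eqref{sca} is the Euler--Lagrange equation of \eqref{minw}.

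For the last assertion I expect the only real work to be the $H^2$ bound. Since $\nabla_{x_1^2}$ annihilates constants, \eqref{sca} in fact holds for every $g\in H^1(\mathbb{T}^2)$, so $A_t$ is a weak solution of \eqref{poi}, i.e. ${\rm div}_{x_1^2}(\varphi\nabla_{x_1^2}A_t)={\rm div}_{x_1^2}(\varphi F_t)$ in $\mathcal{D}'(\mathbb{T}^2)$. Expanding both divergences by the product rule (legitimate because $\varphi$ is smooth, $\nabla_{x_1^2}A_t\in L^2$ and $F_t\in H_\varphi({\rm div};\mathbb{T}^2)$) turns this into
\[
\varphi\,\Delta_{x_1^2}A_t=\varphi\,{\rm div}_{x_1^2}F_t+\nabla_{x_1^2}\varphi\cdot F_t-\nabla_{x_1^2}\varphi\cdot\nabla_{x_1^2}A_t,
\]
whose right--hand side lies in $L^2(\mathbb{T}^2)$ (the first two terms because ${\rm div}_{x_1^2}F_t$ and $F_t$ are in $L^2$ and $\varphi,\nabla_{x_1^2}\varphi$ are bounded, the last because $A_t\in H^1$). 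Dividing by $\varphi\ge c_\varphi>0$ gives $\Delta_{x_1^2}A_t\in L^2(\mathbb{T}^2)$, and $A_t\in H^2(\mathbb{T}^2)$ then follows from global elliptic regularity for the flat Laplacian on the torus --- where there is no boundary and hence no boundary--regularity subtlety --- which one can read off directly from Fourier series, since $\widehat{A_t}(k)=-|k|^{-2}\widehat{\Delta_{x_1^2}A_t}(k)$ for $k\ne0$ and $\widehat{A_t}(0)=0$ (as $\int_{\mathbb{T}^2}A_t=0$) imply $\sum_k(1+|k|^2)^2|\widehat{A_t}(k)|^2\le C\,\|\Delta_{x_1^2}A_t\|^2_{L^2(\mathbb{T}^2)}<\infty$. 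The main obstacle is thus essentially bookkeeping: checking that the product--rule manipulations are justified at the available regularity, and noting that it is the full hypothesis $F_t\in H_\varphi({\rm div};\mathbb{T}^2)$ (rather than merely $F_t\in L^2_\varphi$) that makes the right--hand side above square integrable.
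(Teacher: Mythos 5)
Your proposal is correct and follows essentially the same route as the paper: existence and uniqueness of the minimizer via coercivity/strict convexity of the quadratic functional on $H^1(\mathbb{T}^2)/\mathbb{R}$, the first variation $s\mapsto J(A_t+sg)$ to obtain \eqref{sca}, and elliptic regularity after observing that $\Delta_{x_1^2}A_t\in L^2(\mathbb{T}^2)$. Your write-up is in fact slightly more careful than the paper's at the regularity step, where you make explicit the product-rule expansion and the division by $\varphi\ge c_\varphi>0$ that the paper leaves implicit.
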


\begin{proof}
Let us introduce the application $I:H^1(\mathbb{T}^2)/\mathbb{R}\rightarrow \mathbb{R}^+$, defined by $I(g)=||\nabla_{x_1^2} h-F_t||^2_{L^2_{\varphi}(\mathbb{T}^2)}$. It is easy to prove that $I$ is $\alpha$-convex and coercive, i.e. $\displaystyle\lim_{\|g\|_{H^1}\!\rightarrow+\infty}I(g)=+\infty$. Thus $I$ admits a unique global minimum $A_t\in \displaystyle H^1(\mathbb{T}^2)/\mathbb{R}$. Furthermore, $\forall\varepsilon>0,\forall g\in H^1(\mathbb{T}^2)$,
 \begin{equation}\label{epsl}
\begin{aligned}
 I(A_t+\varepsilon g)&=\displaystyle\int_{\mathbb{T}^2}|\nabla_{x_1^2}( A_t+\varepsilon g)-F_t|^2\varphi\\
 &=\displaystyle\int_{\mathbb{T}^2}|\nabla_{x_1^2} A_t-F_t|^2\varphi-2\varepsilon\int_{\mathbb{T}^2} (\nabla_{x_1^2} A_t-F_t)\cdot\nabla_{x_1^2} g\,\varphi+\varepsilon^2\int_{\mathbb{T}^2}|\nabla_{x_1^2} g|^2\varphi\\
 &=I(A_t)-2\varepsilon\displaystyle\int_{\mathbb{T}^2}(\nabla_{x_1^2} A_t-F_t)\cdot\nabla_{x_1^2} g\,\varphi+\varepsilon^2\int_{\mathbb{T}^2}|\nabla_{x_1^2} g|^2\varphi.
\end{aligned}
\end{equation}

Since $A_t$ is the minimum of $I$, then $I(A_t+\varepsilon g)\geq I(A_t),\,\forall\varepsilon>0,\forall g\in H^1(\mathbb{T}^2)$. By considering the asymptotic regime $\varepsilon\rightarrow 0$ in the last equation, one thus obtains the equation \eqref{sca}:
 \begin{equation*}
\displaystyle\int_{\mathbb{T}^2}\nabla_{x_1^2} A_t\cdot\nabla_{x_1^2} g\,\varphi=\int_{\mathbb{T}^2} F_t\cdot\nabla_{x_1^2} g\,\varphi,\,\forall g\in H^1(\mathbb{T}^2)/\mathbb{R}.
\end{equation*}
This is the weak formulation of the following problem \eqref{poi} in $H^1(\mathbb{T}^2)/\mathbb{R}$:
$${\rm div}_{x_1^2}(\nabla_{x_1^2} A_t\varphi(t,.))={\rm div}_{x_1^2}(F_t\varphi(t,.)).$$
Since $\varphi$ is a smooth positive function, then $\exists \,\delta>0$, s.t. $\varphi>\delta$. Furthermore, since ${\rm div}_{x_1^2}(F_t\varphi(t,.))\in L^2(\mathbb{T}^2)$, thus $\Delta_{x_1^2} A_t\in L^2(\mathbb{T}^2).$ Therefore, using standard elliptic regularity results, $A_t\in H^2(\mathbb{T}^2)$. 
\end{proof}

For any positive probability density function $\varphi$, the estimation vector field $\nabla_{x_1^2}A_t$ is the projection of $F_t$ onto a gradient. In the following, we will use the notation:
\begin{equation}\label{projop}
\mathcal{P}_{\varphi} (F_t)=\nabla_{x_1^2}A_t,
\end{equation}
 where the projection operator $\mathcal{P}_{\varphi}$ is a linear projection defined from $H_{\varphi}({\rm div}; \mathbb{T}^2)$ to $ H^1(\mathbb{T}^2)\times H^1(\mathbb{T}^2)$. Notice in particular that $\mathcal{P}_{\varphi}\circ\mathcal{P}_{\varphi}=\mathcal{P}_{\varphi}$. 

\subsection{The projected ABF (PABF) method}\label{pr}
We will study the longtime convergence of the following PABF dynamics:
\begin{equation}\label{pabf}
\left\lbrace
\begin{aligned}
 dX_t&=-\nabla\big( V-A_t\circ\xi\big)(X_t)dt+\sqrt{2\beta^{-1}}dW_t,\\
\nabla_{x_1^2} A_t&=\mathcal{P}_{\psi^{\xi}} (F_t),\\
 F_t^i(x_1,x_2)&=\displaystyle\mathbb{E}[\partial_i V(X_t)|\xi(X_t)=(x_1,x_2)],\,i=1,2,
\end{aligned}
\right.
\end{equation}	
  where $\mathcal{P}_{\psi^{\xi}} $ is the linear projection defined by \eqref{projop} and $W_t$ is a standard $nN$-dimensional Brownian motion. Thanks to the diffusion term $\sqrt{2\beta^{-1}}dW_t$, $X_t$ admits a smooth density $\psi$ with respect to the the Lebesgue measure on $\mathbb{T}^n$ and $\psi^{\xi}$ then denotes the marginal distribution of $\psi$ along $\xi$:
  \begin{equation}\label{psixi}
\psi^{\xi}(t,x_1,x_2)=\displaystyle\int_{\Sigma_{(x_1,x_2)}}\psi(t,x)dx_3^n.\\
\end{equation}
The dynamics \eqref{pabf} is the PABF dynamics \eqref{ABF} with $\xi(x)=(x_1,x_2)$, $W=0$ and a weighted Helmholtz projection. The weight $\psi^{\xi}$ is introduced to simplify the convergence proof (see Remark~\ref{simpdi} in Section~\ref{pro}).
    \begin{remark}\label{condme}
  If the law of $X_t$ is $\psi(t,x)dx$ then the law of $\xi(X_t)$ is $\psi^{\xi}(t,x_1,x_2)dx_1dx_2$ and the conditional distribution of $X_t$ given $\xi(X_t)=(x_1,x_2)$ is (see \eqref{codms} for a similar formula when $\psi=Z^{-1}_{\Sigma_{(x_1,x_2)}} \mathrm{e}^{-\beta V}$):\\
\begin{equation}\label{cond}
d\mu_{t,x_1,x_2}=\displaystyle\frac{\psi(t,x)dx_3^n}{\psi^{\xi}(t,x_1,x_2)}.
\end{equation}
Indeed, for any smooth functions $f$ and $g$,

$\begin{aligned}
\mathbb{E}(f(\xi(X_t))g(X_t))&=\displaystyle \int_{\mathbb{T}^n}f(\xi(x))g(x)\psi(t,x)dx\\
&=\int_{\mathbb{T}^2}\int_{\Sigma_{(x_1,x_2)}}f\circ \xi g\psi dx_3^ndx_1dx_2\\
&=\int_{\mathbb{T}^2}f(x_1,x_2)\frac{\int_{\Sigma_{(x_1,x_2)}} g\psi dx_3^n}{\psi^{\xi}(x_1,x_2)}\psi^{\xi}(x_1,x_2)dx_1dx_2.
\end{aligned}$\\
$\lozenge$
  \end{remark}
  
Let us now introduce the non linear partial differential equation (the so-called {\em Fokker-Planck equation}) which rules the evolution of the density $\psi(t,x)$ of $X_t$ solution of \eqref{pabf}:\\
\begin{equation}\label{Fok}
\left\{						
\begin{array}{l}
\displaystyle\partial_t\psi={\rm div}\left (\left(\nabla
\displaystyle V-\sum_{i=1}^2\partial_iA_t\circ\xi\nabla\xi_i\right )\psi+\beta^{-1}\nabla\psi\right ),\quad \mbox{for}\, (t,x)\in[0,\infty[\times \mathbb{T}^n,\\
\forall t\geq 0,\,{\rm div}(\nabla A_t\psi^{\xi}(t,.))={\rm div}(F_t\psi^{\xi}(t,.)),\, \mbox{in }\mathbb{T}^2\mbox{ with periodic boundary conditions,}\\
\displaystyle \forall t\geq 0,\,\forall (x_1,x_2)\in \mathbb{T}^2,\, F_t^i(x_1,x_2)=\frac{\displaystyle\int_{\Sigma_{(x_1,x_2)}}\partial_iV\psi dx_3^n}{\psi^{\xi}(x_1,x_2)},\,i=1,2.\\
\end{array}
\right.
\end{equation}
The first equation of \eqref{Fok} rewrites:
 \begin{equation}\label{reform}
 \partial_t\psi={\rm div}[\nabla V\psi+\beta^{-1}\nabla\psi]-\partial_1((\partial_1A_t)\psi)-\partial_2((\partial_2A_t)\psi).
 \end{equation}
Suppose that ($\psi,F_t,A_t)$ is a solution of \eqref{Fok} and let us introduce the expected long-time limits of $\psi$, $\psi^{\xi}$ (defined by \eqref{psixi}) and $\mu_{t,x_1,x_2}$ (defined by \eqref{cond}) respectively:
\begin{enumerate}
  \item $\psi_{\infty}=e^{-\beta(V-A\circ\xi)}$;
  \item $\psi_{\infty}^{\xi}=1$ (uniform law);
  \item \begin{equation}\label{ecm}
   \mu_{\infty,x_1,x_2}=Z_{\Sigma_{(x_1,x_2)}}^{-1}e^{-\beta V}dx_3^n.
 \end{equation}

  %=d\mu_{\Sigma_{(x_1,x_2)}} which is the equilibrium conditioned measure defined in \eqref{codms}.
\end{enumerate}
Notice that the probability measure $\psi_{\infty}^{\xi}(x_1,x_2)dx_1dx_2$ is the image of the probability measure $\psi_{\infty}(x)dx$ by $\xi$ and that $ \mu_{\infty,x_1,x_2}=\mu_{\Sigma_{(x_1,x_2)}}$ defined in \eqref{codms}. Furthermore, we have that 
 $$\displaystyle\int_{\mathbb{T}^n}\psi_{\infty}=1,\displaystyle\int_{\mathbb{T}^2}\psi_{\infty}^{\xi}=1 \mbox{ and }\forall (x_1,x_2)\in \mathbb{T}^2,\quad\displaystyle\int_{\Sigma_{(x_1,x_2)}}d\mu_{\infty,x_1x_2}=1.$$

\begin{remark}\label{conlaw}
In the case when $W\neq0$, the first equation of the Fokker-Plank problem~\eqref{Fok} becomes:
$$\partial_t\psi={\rm div}\left (\nabla\left(V-A_t\circ\xi-W\circ\xi \right)\psi+\beta^{-1}\nabla\psi\right ).$$
The expected long-time limits of $\psi$, $\psi^{\xi}$ and $\mu_{t,x_1,x_2}$ are respectively:
\begin{enumerate}
  \item $\psi_{\infty}=Z_W^{-1}e^{-\beta(V-A\circ\xi-W\circ\xi)}$;
  \item $\psi_{\infty}^{\xi}=Z_W^{-1}e^{-\beta W}$;
  \item $\mu_{\infty,x_1,x_2}=Z_{\Sigma_{(x_1,x_2)}}^{-1}e^{-\beta V}dx_3^n$,
   \end{enumerate}
   where $\displaystyle Z_W=\int_{\mathbb{T}^2}e^{-\beta W}$.
\end{remark}
\subsection{Precise statements of the longtime convergence results}\label{res}
In section~\ref{entr}, some well-known results on entropy techniques are presented. For a general introduction to logarithmic Sobolev inequalities, their properties and their relation to long-time behaviours of solutions to partial differential equations, we refer to [\refcite{ane:00,arn:01,Vil:03}]. Section~\ref{conv} presents the main theorem of convergence.

 \subsubsection{\textit{Entropy and Fisher information}}\label{entr}

  Define the relative entropy $H(.|.)$ as follows: for any probability measures $\mu$ and $\nu$ such that $\mu$ is absolutely continuous with respect to $\nu$ (denoted $\mu \ll\nu$),
  $$H(\mu | \nu)=\displaystyle\int{\rm ln}\left(\frac{d\mu}{d\nu}\right)d\mu.$$
Abusing the notation, we will denote $H(\varphi|\psi)$ for $H(\varphi(x) dx|\psi(x) dx)$ in case of probability measures with densities.
Let us recall the Csiszar-Kullback inequality (see [\refcite{arn:01}]):
\begin{equation}\label{kul}
\|\mu-\nu\|_{TV}\leq \sqrt{2H(\mu|\nu)},
\end{equation}
where $\|\mu-\nu\|_{TV}=\displaystyle\sup_{\|f\|_{L^{\infty}}\leq1}\left\{\int fd(\mu-\nu)\right\}$ is the total variation norm of the signed measure $\mu-\nu$. When both $\mu$ and $\nu$ have densities with respect to the Lebesque measure, $\|\mu-\nu\|_{TV}$ is simply the $L^1$ norm of the difference between the two densities. The entropy $H(\mu|\nu)$ can be understood as a measure of how close $\mu$ and $\nu$ are.

Now, let us define the Fisher information of $\mu$ with respect to $\nu$:
\begin{equation}\label{fish}
I(\mu | \nu)=\displaystyle\int\left|\nabla{\rm ln}\left(\frac{d\mu}{d\nu}\right)\right|^2d\mu.
\end{equation}

The Wasserstein distance is another way to compare two probability measures $\mu$ and $\nu$ defined on a space $\Sigma$,
$$W(\mu,\nu)=\sqrt{\displaystyle\inf_{\pi\in\prod(\mu,\nu)}\int_{\Sigma\times\Sigma}d_{\Sigma}(x,y)^2d\pi(x,y)},$$
where the geodesic distance  $d_{\Sigma}$ on $\Sigma$ is defined as: $\forall \, x,y\in \Sigma,$
$$d_{\Sigma}(x,y)=\inf\left\{\sqrt{\int_0^1|\dot{w}(t)|^2dt}\,\mid \,w\in C^1([0,1],\Sigma),\,w(0)=x,w(1)=y\right\},$$
and $\prod(\mu,\nu)$ denotes the set of coupling probability measures, namely probability measures on $\Sigma\times\Sigma$ such that their marginals are $\mu$ and $\nu$: $\forall\pi\in\prod(\mu,\nu),\int_{\Sigma\times\Sigma}\phi(x)d\pi(x,y)=\int_{\Sigma}\phi d\mu $ and $\int_{\Sigma\times\Sigma}\psi(y)d\pi(x,y)=\int_{\Sigma}\psi d\nu .$

\begin{definition}
We say that a probability measure $\nu$ satisfies a logarithmic Sobolev inequality with constant $\rho>0$ (denoted LSI($\rho$)) if for all probability measure $\mu$ such that $\mu \ll\nu$ ,
$$H(\mu | \nu)\leq\frac{1}{2\rho}I(\mu | \nu).$$
\end{definition}

\begin{definition}
We say that a probability measure $\nu$ satisfies  a Talagrand inequality with constant $\rho>0$ (denoted T($\rho$)) if for all probability measure $\mu$ such that $\mu \ll\nu$,
$$W(\mu,\nu)\leq\sqrt{\frac{2}{\rho}H(\mu | \nu)}.$$
\end{definition}
\begin{remark}
We implicitly assume in the latter definition, that the probability measures have finite moments of order 2. This is the case for the probability measures used in this paper.
\end{remark}
The following lemma is proved in [\refcite{Ott:00}], Theorem~1:
\begin{lemma}\label{tala}
If $\nu$ satisfies $LSI(\rho)$, then $\nu$ satisfies $T(\rho)$. 
\end{lemma}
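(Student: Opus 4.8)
The goal is to show that $W(\mu,\nu)^{2}\le\frac{2}{\rho}H(\mu|\nu)$ for every probability measure $\mu\ll\nu$. The approach I would follow is the one of Otto and Villani: build an explicit interpolating curve from $\mu$ to $\nu$ by letting $\mu$ evolve under the gradient flow of the relative entropy $H(\cdot\,|\,\nu)$ in the Wasserstein space, i.e.\ let $\mu_{t}$ be the law at time $t$ of the diffusion reversible with respect to $\nu$ (the $W$-gradient flow of $H(\cdot|\nu)$), started at $\mu_{0}=\mu$. Since $(\mu_{t})_{t\ge 0}$ joins $\mu$ to $\nu$, the Wasserstein distance is bounded by the length of this curve, $W(\mu,\nu)\le\int_{0}^{\infty}|\dot\mu_{t}|\,dt$, where $|\dot\mu_{t}|$ is the metric speed of $t\mapsto\mu_{t}$ in $(\mathcal P_{2}(\Sigma),W)$. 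The logarithmic Sobolev inequality is used twice: once to force $H(\mu_{t}|\nu)\to 0$ (hence $\mu_{t}\to\nu$ by Csiszar-Kullback), and once, more crucially, to bound the length integral by $\sqrt{H(\mu|\nu)}$.

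Concretely, I would invoke two structural facts about this gradient flow: the entropy dissipation identity $\frac{d}{dt}H(\mu_{t}|\nu)=-I(\mu_{t}|\nu)$ and the Otto-calculus estimate $|\dot\mu_{t}|\le\sqrt{I(\mu_{t}|\nu)}$ (the metric slope of $H(\cdot|\nu)$ at $\mu_{t}$ equals $\sqrt{I(\mu_{t}|\nu)}$). Writing $h(t)=\sqrt{H(\mu_{t}|\nu)}$, the dissipation identity gives $h'(t)=-I(\mu_{t}|\nu)/(2\sqrt{H(\mu_{t}|\nu)})$, and LSI($\rho$) in the form $I(\mu_{t}|\nu)\ge 2\rho\,H(\mu_{t}|\nu)$ yields
\[
\sqrt{I(\mu_{t}|\nu)}=\frac{I(\mu_{t}|\nu)}{\sqrt{I(\mu_{t}|\nu)}}\le\frac{I(\mu_{t}|\nu)}{\sqrt{2\rho\,H(\mu_{t}|\nu)}}=-\sqrt{\tfrac{2}{\rho}}\;h'(t).
\]
Integrating over $t\in(0,\infty)$ and using $h(t)\to 0$ (itself a consequence of LSI($\rho$) and Gr\"onwall: $H(\mu_{t}|\nu)\le e^{-2\rho t}H(\mu|\nu)$) gives
\[
W(\mu,\nu)\le\int_{0}^{\infty}\sqrt{I(\mu_{t}|\nu)}\,dt\le\sqrt{\tfrac{2}{\rho}}\,\big(h(0)-\lim_{t\to\infty}h(t)\big)=\sqrt{\tfrac{2}{\rho}\,H(\mu|\nu)},
\]
which is exactly T($\rho$).

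The main obstacle is rigour in the two ``soft'' facts above: the entropy dissipation identity and, above all, the speed bound $|\dot\mu_{t}|\le\sqrt{I(\mu_{t}|\nu)}$ are not computations but theorems about the differential structure of $(\mathcal P_{2},W)$. Making them precise requires either the Benamou-Brenier continuity-equation characterization of $W$ together with parabolic regularity of $\mu_{t}$ for $t>0$ (plus an approximation of $\mu$ by measures with bounded density and finite second moment to handle $t=0$), or the theory of curves of maximal slope. One must also check finiteness of all the integrals, which is automatic on the compact tori relevant to the rest of this paper, and in general uses the second-moment control on $\nu$ furnished by LSI($\rho$) itself (Herbst's argument).

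A route that avoids the Wasserstein differential calculus altogether is the Hamilton-Jacobi argument of Bobkov-Gentil-Ledoux. By Kantorovich duality for the quadratic cost, $\tfrac12 W(\mu,\nu)^{2}=\sup_{f}\big\{\int Q_{1}f\,d\mu-\int f\,d\nu\big\}$ with $Q_{t}f(x)=\inf_{y}\{f(y)+\tfrac{1}{2t}d_{\Sigma}(x,y)^{2}\}$ the Hopf-Lax semigroup, and by the Donsker-Varadhan variational formula $H(\mu|\nu)=\sup_{g}\{\int g\,d\mu-\log\int e^{g}\,d\nu\}$, the inequality T($\rho$) is equivalent to the infimum-convolution bound $\int_{\Sigma}e^{\rho Q_{1}f}\,d\nu\le e^{\rho\int_{\Sigma}f\,d\nu}$ for all bounded $f$. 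To prove the latter, set $Z(t)=\int_{\Sigma}e^{\rho t\,Q_{t}f}\,d\nu$ and apply LSI($\rho$) (in the form $\mathrm{Ent}_{\nu}(g^{2})\le\frac{2}{\rho}\int|\nabla g|^{2}d\nu$) to $g=e^{\rho t\,Q_{t}f/2}$; using that $Q_{t}f$ solves $\partial_{t}Q_{t}f+\tfrac12|\nabla Q_{t}f|^{2}=0$ a.e.\ (Hopf-Lax), the inequality collapses to $t\,Z'(t)\le Z(t)\log Z(t)$, i.e.\ $t\mapsto\tfrac1t\log Z(t)$ is non-increasing; since $\tfrac1t\log Z(t)\to\rho\int f\,d\nu$ as $t\to 0^{+}$ (because $Q_{t}f\to f$ and $f$ is bounded), evaluating at $t=1$ gives the claim. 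Here the only delicate points are the a.e.\ validity of the Hamilton-Jacobi equation and the differentiation under the integral sign defining $Z(t)$, both standard on $\mathbb{R}^{d}$ or on a compact manifold.
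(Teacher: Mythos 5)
Your proposal is correct, but note that the paper does not actually prove this lemma: it simply cites Otto and Villani ([\refcite{Ott:00}], Theorem~1), so there is no internal proof to compare against. Your first argument (Wasserstein gradient flow of the relative entropy, entropy dissipation $\frac{d}{dt}H(\mu_t|\nu)=-I(\mu_t|\nu)$, metric speed bounded by $\sqrt{I}$, and LSI to control the length of the curve) is precisely the argument of the cited reference, with the constants matching the paper's conventions $H\leq\frac{1}{2\rho}I$ and $W\leq\sqrt{\frac{2}{\rho}H}$; your second route is the Bobkov--Gentil--Ledoux Hamilton--Jacobi/Hopf--Lax proof, which is a genuinely different and in some ways more robust derivation (it avoids the differential calculus on $(\mathcal{P}_2,W)$ at the price of Kantorovich duality and the a.e.\ Hamilton--Jacobi equation, and it extends more easily to metric measure settings). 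You also correctly identify the only delicate points (rigour of the dissipation identity and of the speed bound, behaviour at $t=0$, finiteness of integrals), which are harmless on the compact tori used in this paper. In short: both sketches are sound proofs of the Otto--Villani theorem; the paper merely outsources the result.
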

Recall that $X_t$ solution to \eqref{pabf} has a density $\psi(t,.)$. In the following, we denote the {\em Total Entropy} by 
\begin{equation}\label{tent}
 E(t)=H(\psi(t,.)|\psi_{\infty})=\int_{\mathbb{T}^n}\ln(\psi/\psi_{\infty})\psi ,
\end{equation}
the {\em Macroscopic Entropy} by
\begin{equation}\label{maent}
E_M(t)=H(\psi^{\xi}(t,.)|\psi^{\xi}_{\infty})=\int_{\mathbb{T}^2}\ln(\psi^{\xi}/\psi^{\xi}_{\infty})\psi^{\xi},
\end{equation}
and the {\em Microscopic Entropy} by 
\begin{equation}\label{mient}
E_m(t)=\displaystyle\int_{\mathcal{M}}e_m(t,x_1,x_2)\psi^{\xi}(t,x_1,x_2)dx_1dx_2,
\end{equation}
 where $e_m(t,x_1,x_2)=H(\mu_{t,x_1,x_2}|\mu_{\infty,x_1,x_2})$. The following result is straightforward to check:

\begin{lemma}\label{entsum} It holds, $\forall t\geq 0$,
$$E(t)=E_M(t)+E_m(t).$$
\end{lemma}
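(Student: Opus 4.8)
The plan is to obtain the identity as a direct consequence of the disintegration of the law $\psi(t,\cdot)\,dx$ along $\xi$, combined with the analogous factorization of the stationary density $\psi_\infty$; in other words, it is the chain rule (additivity) for relative entropy under conditioning.

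First I would record the two relevant factorizations. By the very definition \eqref{cond} of the conditional law, the density of $\mu_{t,x_1,x_2}$ with respect to $dx_3^n$ equals $\psi(t,x)/\psi^\xi(t,x_1,x_2)$, so that
\begin{equation*}
\psi(t,x)=\psi^\xi(t,x_1,x_2)\,\frac{d\mu_{t,x_1,x_2}}{dx_3^n}(x).
\end{equation*}
For the limit, using \eqref{free} one has $\mathrm{e}^{\beta A(x_1,x_2)}=Z_{\Sigma_{(x_1,x_2)}}^{-1}$, hence $\psi_\infty(x)=\mathrm{e}^{-\beta(V-A\circ\xi)(x)}=Z_{\Sigma_{(x_1,x_2)}}^{-1}\mathrm{e}^{-\beta V(x)}$, which is exactly $\psi_\infty^\xi(x_1,x_2)$ (equal to $1$) times $\frac{d\mu_{\infty,x_1,x_2}}{dx_3^n}(x)$ as given by \eqref{ecm}. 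Taking the logarithm of the ratio then yields, for a.e. $x\in\mathbb{T}^n$,
\begin{equation*}
\ln\frac{\psi(t,x)}{\psi_\infty(x)}=\ln\frac{\psi^\xi(t,x_1,x_2)}{\psi^\xi_\infty(x_1,x_2)}+\ln\frac{d\mu_{t,x_1,x_2}}{d\mu_{\infty,x_1,x_2}}(x).
\end{equation*}

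Next I would insert this decomposition into the definition \eqref{tent} of $E(t)$ and integrate first in $x_3^n$ over $\Sigma_{(x_1,x_2)}$ and then in $(x_1,x_2)$ over $\mathbb{T}^2$. For the first term, $\int_{\Sigma_{(x_1,x_2)}}\psi(t,x)\,dx_3^n=\psi^\xi(t,x_1,x_2)$ by \eqref{psixi}, so this contribution is precisely $E_M(t)$ as defined in \eqref{maent}. For the second term, the identity $\psi(t,x)\,dx_3^n=\psi^\xi(t,x_1,x_2)\,d\mu_{t,x_1,x_2}$ turns the inner integral into $\psi^\xi(t,x_1,x_2)\int_{\Sigma_{(x_1,x_2)}}\ln\bigl(d\mu_{t,x_1,x_2}/d\mu_{\infty,x_1,x_2}\bigr)\,d\mu_{t,x_1,x_2}=\psi^\xi(t,x_1,x_2)\,e_m(t,x_1,x_2)$, and integrating in $(x_1,x_2)$ gives $E_m(t)$ by \eqref{mient}. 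Summing the two contributions gives $E(t)=E_M(t)+E_m(t)$.

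There is essentially no hard step here: the only points requiring a line of justification are the factorization $\psi_\infty(x)=\psi^\xi_\infty(x_1,x_2)\,\frac{d\mu_{\infty,x_1,x_2}}{dx_3^n}(x)$, which relies on the explicit formula \eqref{free} for the free energy, and the use of Fubini's theorem to exchange the order of integration, which is legitimate once the logarithm is split into positive and negative parts and since the entropies at play are finite. I would also note in passing that the computation is completely general — it is the standard additivity of relative entropy under disintegration along $\xi$ — so the same argument applies verbatim in the setting of Remark~\ref{conlaw} with $W\neq 0$.
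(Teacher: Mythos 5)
Your proof is correct: the factorizations $\psi=\psi^{\xi}\,\frac{d\mu_{t,x_1,x_2}}{dx_3^n}$ and $\psi_{\infty}=\psi^{\xi}_{\infty}\,\frac{d\mu_{\infty,x_1,x_2}}{dx_3^n}$ (the latter following from \eqref{free} and \eqref{ecm}) give the additive splitting of $\ln(\psi/\psi_{\infty})$, and integrating against $\psi$ with Fubini yields exactly $E_M+E_m$. The paper states this lemma without proof as "straightforward to check", and your argument is precisely the standard chain rule for relative entropy under disintegration along $\xi$ that the authors have in mind, so there is nothing to compare beyond noting that you have supplied the omitted details correctly.
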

Note that the Fisher information of $\mu_{t,x_1,x_2}$ with respect to $\mu_{\infty,x_1,x_2}$ can be written as (see \eqref{fish}): 
$$I(\mu_{t,x_1,x_2}|\mu_{\infty,x_1,x_2})=\displaystyle \int_{\Sigma_{(x_1,x_2)}}|\nabla_{x_3^n}\ln(\psi(t,.)/\psi_{\infty})|^2d\mu_{t,x_1,x_2}.$$

\subsubsection{\textit{Convergence of the PABF dynamics \eqref{Fok}}}\label{conv}
The following proposition shows that the density function $\psi^{\xi}$ satisfies a simple diffusion equation.
  \begin{proposition}\label{diff}
Suppose that $(\psi,F_t,A_t)$ is a smooth solution of \eqref{Fok}. Then $\psi^{\xi}$ satisfies:
\begin{equation}\label{dif2}
\left\{\begin{array}{rl}
\partial_t\psi^{\xi}&=\beta^{-1}\Delta_{x_1^2}\psi^{\xi}, \mbox{ in }[0,\infty[\times \mathbb{T}^2,\\
\psi^{\xi}(0,.)&=\psi^{\xi}_0,\mbox{ on } \mathbb{T}^2.
\end{array}
\right.
\end{equation}
\end{proposition}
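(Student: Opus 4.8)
The plan is to integrate the Fokker--Planck equation \eqref{reform} over the ``microscopic'' variables $x_3^n\in\mathbb{T}^{n-2}$ and to observe that all the terms involving $V$, $A_t$ and $F_t$ either vanish by periodicity or cancel thanks to the second line of \eqref{Fok}. First I would decompose the $\mathbb{R}^n$ gradient and divergence into their parts along $(x_1,x_2)$ and along $(x_3,\dots,x_n)$, writing $\nabla=(\nabla_{x_1^2},\nabla_{x_3^n})$ and ${\rm div}={\rm div}_{x_1^2}+{\rm div}_{x_3^n}$. Applying $\int_{\Sigma_{(x_1,x_2)}}\cdot\,dx_3^n$ to \eqref{reform} and using the assumed smoothness to exchange $\partial_t$, $\partial_1$ and $\partial_2$ with the integral, the left-hand side becomes $\partial_t\psi^{\xi}$ by \eqref{psixi}, while every term on the right-hand side carrying a $\nabla_{x_3^n}$ or a ${\rm div}_{x_3^n}$ integrates to zero because $\mathbb{T}^{n-2}$ has no boundary.

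Next I would collect the surviving contributions. Commuting $\nabla_{x_1^2}$ with the integral, the diffusion term produces $\beta^{-1}{\rm div}_{x_1^2}\int_{\Sigma_{(x_1,x_2)}}\nabla_{x_1^2}\psi\,dx_3^n=\beta^{-1}\Delta_{x_1^2}\psi^{\xi}$. Since $A_t$ depends only on $(x_1,x_2)$, the two biasing drift terms $-\partial_1((\partial_1A_t)\psi)-\partial_2((\partial_2A_t)\psi)$ integrate to $-{\rm div}_{x_1^2}\big((\nabla_{x_1^2}A_t)\psi^{\xi}\big)$. Finally, using the definition of $F_t$ in the third line of \eqref{Fok}, that is $\int_{\Sigma_{(x_1,x_2)}}\partial_iV\,\psi\,dx_3^n=F_t^i\,\psi^{\xi}$ for $i=1,2$, the term ${\rm div}(\nabla V\psi)$ contributes ${\rm div}_{x_1^2}(F_t\,\psi^{\xi})$. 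Altogether this gives
\[
\partial_t\psi^{\xi}=\beta^{-1}\Delta_{x_1^2}\psi^{\xi}+{\rm div}_{x_1^2}(F_t\,\psi^{\xi})-{\rm div}_{x_1^2}\big((\nabla_{x_1^2}A_t)\psi^{\xi}\big).
\]

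The decisive step is then to invoke the weighted Poisson equation, namely the second line of \eqref{Fok}, ${\rm div}_{x_1^2}(\nabla_{x_1^2}A_t\,\psi^{\xi})={\rm div}_{x_1^2}(F_t\,\psi^{\xi})$: this is exactly where the choice of the weight $\psi^{\xi}$ in the Helmholtz projection $\mathcal{P}_{\psi^{\xi}}$ is used, since it makes the last two terms cancel identically and leaves the closed heat equation $\partial_t\psi^{\xi}=\beta^{-1}\Delta_{x_1^2}\psi^{\xi}$ on $\mathbb{T}^2$. The initial condition $\psi^{\xi}(0,\cdot)=\psi_0^{\xi}$ follows directly from \eqref{psixi}. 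I do not expect a genuine obstacle here: the only points needing care are the justification of differentiation under the integral sign (guaranteed by the smoothness hypothesis) and keeping track of which partial derivatives act on which group of variables, in particular the fact that periodicity in $x_3^n$ annihilates all $x_3^n$-divergence terms.
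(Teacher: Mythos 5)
Your proof is correct and follows essentially the same route as the paper: integrate out the fiber variables, identify $\int_{\Sigma_{(x_1,x_2)}}\partial_iV\,\psi\,dx_3^n$ with $F_t^i\psi^{\xi}$, and cancel the two surviving drift terms via the weighted Poisson equation. The only cosmetic difference is that the paper runs the same computation in weak form, testing against $g\circ\xi$ for $g\in H^1(\mathbb{T}^2)$ and invoking \eqref{sca} with $\varphi=\psi^{\xi}(t,\cdot)$, whereas you work with the strong form, which is legitimate under the stated smoothness hypothesis.
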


\begin{remark}\label{rmdif}
If $ \psi^{\xi}_0= 0$ at some points or is not smooth, then $F$ at time $0$ may not be well defined or $I(\psi^{\xi}(0,.)/\psi^{\xi}_{\infty})$ may be infinite. Since, by Proposition \ref{diff}, $ \psi^{\xi}$ satisfies a simple diffusion equation these difficulties disappear as soon as $t > 0$. Therefore, up to considering the problem for $t>t_0>0$, we can suppose that $ \psi^{\xi}_0$ is a smooth positive function.
We also have that for all $t>0$, $\psi^{\xi}(t,.)>0$, $\displaystyle\int_{\mathbb{T}^2}\psi^{\xi}=1$ and $\psi^{\xi}(t,.)\in C^{\infty}(\mathbb{T}^2).$
\end{remark}

\begin{remark}
In the case where $W\neq 0$, the probability density function $\psi^{\xi}$ satisfies the modified diffusion equation:
$$\partial_t\psi^{\xi}=\nabla_{x_1^2}\cdot \left ( \beta^{-1}\nabla_{x_1^2} \psi^{\xi}+\psi^{\xi}\nabla_{x_1^2}W \right).$$
\end{remark}
Here are two simple corollaries of Proposition~\ref{diff}.
\begin{corollary}\label{fishco}
There exists $t_0>0$ and $I_0>0$ (depending on $\psi^{\xi}_0$), such that
$$\forall t>t_0,\quad I(\psi^{\xi}(t,.)|\psi^{\xi}_{\infty})<I_0e^{-\beta^{-1}8\pi^2t}.$$
\end{corollary}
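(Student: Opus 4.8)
The plan is to exploit that, by Proposition~\ref{diff}, $\psi^\xi$ solves the plain heat equation~\eqref{dif2} on $\mathbb{T}^2$, together with the spectral gap of $-\Delta_{x_1^2}$ on the unit torus $\mathbb{T}^2=(\mathbb{R}/\mathbb{Z})^2$, whose lowest nonzero eigenvalue is $4\pi^2$ (note $8\pi^2=2\times 4\pi^2$). Since $\psi^\xi_\infty\equiv 1$, the Fisher information reduces to
\[
I(\psi^\xi(t,\cdot)\,|\,\psi^\xi_\infty)=\int_{\mathbb{T}^2}\frac{|\nabla_{x_1^2}\psi^\xi|^2}{\psi^\xi},
\]
so the first step is to absorb the weight $1/\psi^\xi$ into a constant by bounding $\psi^\xi$ below. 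By Remark~\ref{rmdif} I may fix $t_0>0$ such that $\psi^\xi(t_0,\cdot)$ is smooth and strictly positive; the minimum principle for~\eqref{dif2} then gives $\psi^\xi(t,\cdot)\ge m_0:=\min_{\mathbb{T}^2}\psi^\xi(t_0,\cdot)>0$ for all $t\ge t_0$, whence $I(\psi^\xi(t,\cdot)\,|\,\psi^\xi_\infty)\le m_0^{-1}\,\|\nabla_{x_1^2}\psi^\xi(t,\cdot)\|_{L^2(\mathbb{T}^2)}^2$ for $t\ge t_0$.

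The second step is to show that $\|\nabla_{x_1^2}\psi^\xi(t,\cdot)\|_{L^2}^2$ decays at rate $8\pi^2\beta^{-1}$. Differentiating~\eqref{dif2} in space, each component $\partial_i\psi^\xi$ ($i=1,2$) solves the same heat equation $\partial_t(\partial_i\psi^\xi)=\beta^{-1}\Delta_{x_1^2}(\partial_i\psi^\xi)$ on $\mathbb{T}^2$ and has zero average by periodicity. Multiplying by $\partial_i\psi^\xi$, integrating over $\mathbb{T}^2$ and invoking the Poincaré inequality $\|\nabla_{x_1^2}u\|_{L^2}^2\ge 4\pi^2\|u\|_{L^2}^2$ (valid for every zero-average $u\in H^1(\mathbb{T}^2)$) yields $\tfrac{d}{dt}\|\partial_i\psi^\xi\|_{L^2}^2\le -8\pi^2\beta^{-1}\|\partial_i\psi^\xi\|_{L^2}^2$, the factor $2$ relative to the spectral gap coming from differentiating the squared $L^2$ norm. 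Grönwall's lemma and summation over $i$ give $\|\nabla_{x_1^2}\psi^\xi(t,\cdot)\|_{L^2}^2\le \|\nabla_{x_1^2}\psi^\xi(t_0,\cdot)\|_{L^2}^2\,e^{-8\pi^2\beta^{-1}(t-t_0)}$ for $t\ge t_0$, the right-hand side being finite by parabolic smoothing at time $t_0$.

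Combining the two steps, for $t>t_0$,
\[
I(\psi^\xi(t,\cdot)\,|\,\psi^\xi_\infty)\le \frac{\|\nabla_{x_1^2}\psi^\xi(t_0,\cdot)\|_{L^2}^2}{m_0}\,e^{8\pi^2\beta^{-1}t_0}\;e^{-8\pi^2\beta^{-1}t},
\]
so the claim holds with $I_0$ equal to (any number strictly larger than) the bracketed constant, which indeed depends only on $\psi^\xi_0$ through $\psi^\xi(t_0,\cdot)$; strictness of the inequality is then immediate, or alternatively follows from the fact that the above energy inequality is strict unless $\psi^\xi$ is already constant. I do not expect a genuine obstacle: the only points requiring a little care are the reduction — via Remark~\ref{rmdif} and the minimum principle — to a setting where $\psi^\xi$ is smooth and bounded below (otherwise $1/\psi^\xi$ or $\|\nabla_{x_1^2}\psi^\xi_0\|_{L^2}$ could be infinite), and the correct identification of the Poincaré/spectral-gap constant $4\pi^2$ of $-\Delta_{x_1^2}$ on $\mathbb{T}^2$, which produces the exponent $8\pi^2\beta^{-1}$.
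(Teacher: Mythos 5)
Your proof is correct and follows essentially the same route as the paper: both establish the decay of $\|\nabla_{x_1^2}\psi^{\xi}(t,\cdot)\|_{L^2(\mathbb{T}^2)}^2$ at rate $8\pi^2\beta^{-1}$ by applying the Poincar\'e--Wirtinger inequality to the zero-mean functions $\partial_i\psi^{\xi}$ (which again solve the heat equation), and then control the weight $1/\psi^{\xi}$ in the Fisher information by a uniform lower bound on $\psi^{\xi}$. The only divergence is in how that lower bound is obtained: you invoke the parabolic minimum principle on the torus at a fixed time $t_0>0$, which is a one-line and perfectly valid shortcut, whereas the paper proves exponential decay of all $H^k$ norms of $\psi^{\xi}-1$ by induction and then uses the Sobolev embedding $H^k(\mathbb{T}^2)\hookrightarrow L^{\infty}(\mathbb{T}^2)$ to get $\psi^{\xi}\geq 1-\varepsilon$ for $t>t_0$; the two choices only affect the value of the constant $I_0$, not the rate.
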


\begin{corollary}\label{mic}
The macroscopic entropy $E_M(t)$, defined by \eqref{maent}, converges exponentially fast to zero: 
$$\forall t>t_0,\quad  E_M(t)\leq \frac{I_0}{8\pi^2}e^{-\beta^{-1}8\pi^2t},$$
where $I_0$ is the constant introduced in Corollary~\ref{fishco}.
\end{corollary}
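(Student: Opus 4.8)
The plan is to combine the diffusion equation of Proposition~\ref{diff} with a standard entropy--entropy-dissipation identity and then integrate it in time against the Fisher information decay of Corollary~\ref{fishco}. Since $\psi^{\xi}_{\infty}=1$, the macroscopic entropy reads $E_M(t)=\int_{\mathbb{T}^2}\psi^{\xi}(t,.)\ln\psi^{\xi}(t,.)$. By Remark~\ref{rmdif} we may work for $t>t_0>0$, so that $\psi^{\xi}(t,.)$ is smooth and bounded away from $0$ on $\mathbb{T}^2$, and all the quantities below are well defined and the differentiations under the integral sign are licit.

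First I would differentiate $E_M$ in time. Using $\partial_t\psi^{\xi}=\beta^{-1}\Delta_{x_1^2}\psi^{\xi}$ from Proposition~\ref{diff}, the fact that $\frac{d}{dt}\int_{\mathbb{T}^2}\psi^{\xi}=0$, and an integration by parts on the torus (no boundary terms), one obtains
\begin{equation*}
E_M'(t)=\int_{\mathbb{T}^2}(\partial_t\psi^{\xi})(1+\ln\psi^{\xi})=-\beta^{-1}\int_{\mathbb{T}^2}\frac{|\nabla_{x_1^2}\psi^{\xi}|^2}{\psi^{\xi}}=-\beta^{-1}I(\psi^{\xi}(t,.)|\psi^{\xi}_{\infty}),
\end{equation*}
where the last equality uses $\psi^{\xi}_{\infty}=1$. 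This is the announced entropy--entropy-dissipation identity; in particular $E_M$ is non-increasing.

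Next I would check that $E_M(t)\to 0$ as $t\to+\infty$: this is the convergence to equilibrium for the heat equation on the compact manifold $\mathbb{T}^2$, which follows, for instance, from the fact that the uniform measure on $\mathbb{T}^2$ satisfies a logarithmic Sobolev inequality (with some positive constant), or from the explicit decay of $\psi^{\xi}(t,.)$ towards $1$ given by the heat kernel. Integrating the entropy--entropy-dissipation identity between $t$ and $+\infty$ and using $E_M(+\infty)=0$ then gives
\begin{equation*}
E_M(t)=\beta^{-1}\int_t^{+\infty}I(\psi^{\xi}(s,.)|\psi^{\xi}_{\infty})\,ds.
\end{equation*}
Inserting the bound $I(\psi^{\xi}(s,.)|\psi^{\xi}_{\infty})<I_0\mathrm{e}^{-\beta^{-1}8\pi^2 s}$ from Corollary~\ref{fishco} and computing the elementary integral yields $E_M(t)\le\beta^{-1}I_0\cdot\frac{\beta}{8\pi^2}\mathrm{e}^{-\beta^{-1}8\pi^2 t}=\frac{I_0}{8\pi^2}\mathrm{e}^{-\beta^{-1}8\pi^2 t}$, which is exactly the claim.

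All the computations are elementary; the only points requiring a little care are the justification of the differentiation under the integral sign and of the integration by parts (handled by the smoothing and strict positivity of $\psi^{\xi}$ for $t>t_0$, cf.\ Remark~\ref{rmdif}), and the fact that $E_M(+\infty)=0$, whose essential input is convergence to equilibrium for the flat-torus heat equation. I expect this last point to be the only genuine obstacle. Note that exponential decay of $E_M$ could also be read off directly from a logarithmic Sobolev inequality for the uniform measure on $\mathbb{T}^2$, but integrating against the sharp Fisher information estimate of Corollary~\ref{fishco} is what produces the explicit rate $8\pi^2$ and the explicit constant $I_0/(8\pi^2)$.
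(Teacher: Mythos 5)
Your proof is correct and follows essentially the same route as the paper: differentiate $E_M$ to get the entropy--dissipation identity $E_M'=-\beta^{-1}I(\psi^{\xi}|\psi^{\xi}_{\infty})$, establish $E_M(+\infty)=0$ via a logarithmic Sobolev inequality for the uniform measure on $\mathbb{T}^2$, and then integrate the identity from $t$ to $+\infty$ against the Fisher information bound of Corollary~\ref{fishco}. The computations and the resulting constant $I_0/(8\pi^2)$ match the paper's argument.
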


The assumptions we need to prove the longtime convergence of the biasing force $\nabla A_t$ to the mean force $\nabla A$ are the following:
\begin{description}
  \item[[H1\!\!]] $V\in C^2(\mathbb{T}^n)$ and satisfies:
  $$\exists \gamma >0,\,\forall \,3\leq j\leq n,\forall x\in \mathbb{T}^n,\quad \max(|\partial_1\partial_j V(x)|,|\partial_2\partial_j V(x)|)\leq \gamma .$$
  \item[[H2\!\!]] $V$ is such that $\exists\rho>0$, $\forall (x_1,x_2)\in  \mathbb{T}^2$, $\mu_{\infty,x_1,x_2}=\mu_{\Sigma(x_1,x_2)}$ defined by \eqref{ecm} satisfies $LSI(\rho).$
\end{description}
The main theorem is:
\begin{theorem}\label{theo1}
Let us assume [H1] and [H2]. The following properties then hold:
\begin{enumerate}
  \item The microscopic entropy $E_m$ converges exponentially fast to zero: 
  \begin{equation}\label{Em}
  \exists C>0,\exists \lambda>0,\forall t\geq 0,\quad \sqrt{E_m(t)}\leq Ce^{-\lambda t}.
  \end{equation}
  Furthermore, if $\rho\neq 4\pi^2$, then $\lambda=\beta^{-1}\min (\rho,4\pi^2)$ and\\
   $\displaystyle C=\sqrt{E_m(0)}+\frac{\gamma}{\beta^{-1}|\rho-4\pi^2|}\sqrt{\frac{I_0}{2\rho}}$. 
   If $\rho=4\pi^2$, then for all $\lambda<\beta^{-1}\rho$, there exists a positive constant $C$ such that \eqref{Em} is satisfied.
  \item $\sqrt{E(t)}$ and $\|\psi(t,.)-\psi_{\infty}\|_{L^1( \mathbb{T}^n)}$ both converge exponentially fast to zero with rate $\lambda$.
  \item The biasing force $\nabla_{x_1^2} A_t$ converges to the mean force $\nabla_{x_1^2} A$ in the following sense:
    \begin{equation}\label{At}
  \displaystyle\forall\, t\geq 0,\,\int_{\mathbb{T}^2}|\nabla_{x_1^2} A_t-\nabla_{x_1^2} A|^2\psi^{\xi}(t,x_1,x_2)dx_1dx_2\leq\frac{8\gamma^2}{\rho}E_m(t).
  \end{equation}
\end{enumerate}
\end{theorem}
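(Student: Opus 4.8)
The plan is to use the entropy splitting $E(t)=E_M(t)+E_m(t)$ of Lemma~\ref{entsum}. The macroscopic part is already controlled: by Proposition~\ref{diff} the density $\psi^\xi$ obeys a pure heat equation on $\mathbb{T}^2$, so Corollary~\ref{mic} gives $E_M(t)\to 0$ at rate $8\pi^2\beta^{-1}\ge\lambda$. Hence everything reduces to an estimate on the microscopic entropy $E_m$, and I will prove the three items in the order item~3, item~1, item~2, since the bound \eqref{At} enters the differential inequality for $E_m$. Throughout I assume, as permitted by Remark~\ref{rmdif}, that $\psi^\xi_0$ is smooth and positive, so that Corollary~\ref{fishco} holds from $t=0$.

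\emph{Item 3.} Since $\nabla_{x_1^2}A$ is a gradient, it is fixed by the weighted Helmholtz projection, $\mathcal{P}_{\psi^\xi}(\nabla_{x_1^2}A)=\nabla_{x_1^2}A$, so by linearity $\nabla_{x_1^2}A_t-\nabla_{x_1^2}A=\mathcal{P}_{\psi^\xi}(F_t-\nabla_{x_1^2}A)$. As $\mathcal{P}_{\psi^\xi}$ is an $L^2_{\psi^\xi}(\mathbb{T}^2)$-orthogonal projection it is a contraction, whence $\int_{\mathbb{T}^2}|\nabla_{x_1^2}A_t-\nabla_{x_1^2}A|^2\psi^\xi\le\int_{\mathbb{T}^2}|F_t-\nabla_{x_1^2}A|^2\psi^\xi$. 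Now $F_t^i(x_1,x_2)-\partial_iA(x_1,x_2)$ is the difference of the $\mu_{t,x_1,x_2}$- and $\mu_{\infty,x_1,x_2}$-averages of the restriction of $\partial_iV$ to $\Sigma_{(x_1,x_2)}$; by [H1] this restriction is Lipschitz with constant controlled by $\gamma$, and [H2] together with Lemma~\ref{tala} gives the Talagrand inequality $T(\rho)$ for $\mu_{\infty,x_1,x_2}$, so that $|F_t^i(x_1,x_2)-\partial_iA(x_1,x_2)|^2$ is bounded by a constant times $\frac{\gamma^2}{\rho}H(\mu_{t,x_1,x_2}|\mu_{\infty,x_1,x_2})$. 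Summing over $i$ and integrating in $(x_1,x_2)$ against $\psi^\xi$ yields \eqref{At}.

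\emph{Item 1.} I differentiate $E_m=E-E_M$ in time. From the Fokker--Planck system \eqref{Fok}, rewriting the drift via $\psi_\infty=\mathrm{e}^{-\beta(V-A\circ\xi)}$, one obtains $\frac{dE}{dt}=-\beta^{-1}I(\psi|\psi_\infty)+\int_{\mathbb{T}^n}(\nabla_{x_1^2}A_t-\nabla_{x_1^2}A)\cdot\nabla_{x_1^2}\ln(\psi/\psi_\infty)\,\psi$, while the heat equation for $\psi^\xi$ gives $\frac{dE_M}{dt}=-\beta^{-1}I(\psi^\xi|\psi^\xi_\infty)$. Using the conditional factorisation $\ln(\psi/\psi_\infty)=\ln\psi^\xi+\ell_t$, where $\ell_t:=\ln(d\mu_{t,x_1,x_2}/d\mu_{\infty,x_1,x_2})$, and splitting each Fisher information into its $\nabla_{x_1^2}$ and $\nabla_{x_3^n}$ parts, the $\nabla_{x_3^n}$ part of $I(\psi|\psi_\infty)$ is exactly $\int_{\mathbb{T}^2}I(\mu_{t,x_1,x_2}|\mu_{\infty,x_1,x_2})\psi^\xi$. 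The cross terms are reorganised by the identity $\int_{\Sigma_{(x_1,x_2)}}\nabla_{x_1^2}\ell_t\,d\mu_{t,x_1,x_2}=\beta(F_t-\nabla_{x_1^2}A)$, obtained by differentiating $\int_{\Sigma_{(x_1,x_2)}}d\mu_{t,x_1,x_2}=1$ in $(x_1,x_2)$ and using $\nabla_{x_1^2}\ln Z_{\Sigma_{(x_1,x_2)}}=-\beta\nabla_{x_1^2}A$; and the weak formulation \eqref{sca} of the projection, tested against $\ln\psi^\xi$ and against $A_t-A$, lets one replace $F_t-\nabla_{x_1^2}A$ by $\nabla_{x_1^2}A_t-\nabla_{x_1^2}A$ wherever it is paired with a gradient. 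After these manipulations the only surviving positive term is $\beta\int_{\mathbb{T}^2}|\nabla_{x_1^2}A_t-\nabla_{x_1^2}A|^2\psi^\xi$, and it is absorbed by $-\beta^{-1}\int_{\mathbb{T}^n}\psi\,|\nabla_{x_1^2}\ell_t|^2$: indeed, by Jensen's inequality in the $\Sigma_{(x_1,x_2)}$-variables and the identity above, $\int_{\mathbb{T}^n}\psi\,|\nabla_{x_1^2}\ell_t|^2\ge\beta^2\int_{\mathbb{T}^2}|F_t-\nabla_{x_1^2}A|^2\psi^\xi\ge\beta^2\int_{\mathbb{T}^2}|\nabla_{x_1^2}A_t-\nabla_{x_1^2}A|^2\psi^\xi$, the last step again by the contraction property of $\mathcal{P}_{\psi^\xi}$. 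One is left with $\frac{dE_m}{dt}\le-\beta^{-1}\int_{\mathbb{T}^2}I(\mu_{t,x_1,x_2}|\mu_{\infty,x_1,x_2})\psi^\xi-\int_{\mathbb{T}^2}\nabla_{x_1^2}\psi^\xi\cdot(\nabla_{x_1^2}A_t-\nabla_{x_1^2}A)$.

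\emph{Conclusion and main difficulty.} Applying $LSI(\rho)$ from [H2] gives $\int_{\mathbb{T}^2}I(\mu_{t,x_1,x_2}|\mu_{\infty,x_1,x_2})\psi^\xi\ge2\rho E_m$; Cauchy--Schwarz and \eqref{At} bound the remaining term by a constant times $\gamma\,\rho^{-1/2}\sqrt{I(\psi^\xi|\psi^\xi_\infty)}\sqrt{E_m}$, and Corollary~\ref{fishco} makes $\sqrt{I(\psi^\xi|\psi^\xi_\infty)}$ decay like $\sqrt{I_0}\,\mathrm{e}^{-4\pi^2\beta^{-1}t}$. Hence $\frac{d}{dt}\sqrt{E_m}\le-\beta^{-1}\rho\sqrt{E_m}+c\,\gamma\sqrt{I_0}\,\mathrm{e}^{-4\pi^2\beta^{-1}t}$; integrating after multiplying by $\mathrm{e}^{\beta^{-1}\rho t}$ gives \eqref{Em} with $\lambda=\beta^{-1}\min(\rho,4\pi^2)$ and the stated constant $C$ when $\rho\neq4\pi^2$, while for $\rho=4\pi^2$ the integrating factor produces an extra factor $t$, absorbed into any strictly smaller exponential rate. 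Item~2 then follows from $\sqrt{E(t)}\le\sqrt{E_M(t)}+\sqrt{E_m(t)}$ with Corollary~\ref{mic} and item~1, together with the Csisz\'ar--Kullback inequality \eqref{kul}, which gives $\|\psi(t,\cdot)-\psi_\infty\|_{L^1(\mathbb{T}^n)}\le\sqrt{2E(t)}$. I expect the bookkeeping in \emph{Item 1} to be the real obstacle: differentiating $E_m$ produces several terms coupling the macroscopic variable $(x_1,x_2)$ to the microscopic fibres, and the whole point of the weight $\psi^\xi$ in the Helmholtz projection and of the orthogonality relations \eqref{sca} is precisely that these terms either cancel or are dominated by the dissipation, so that — up to the extra $\beta\int|\nabla_{x_1^2}A_t-\nabla_{x_1^2}A|^2\psi^\xi$ handled in Item~3 — the estimate closes with the same structure as in the idealised, non-projected case.
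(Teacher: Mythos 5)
Your proposal is correct and follows essentially the same route as the paper: the decomposition $E=E_M+E_m$, the heat equation for $\psi^{\xi}$, the Talagrand/LSI bounds on the fibres, and the same final differential inequality $\frac{d}{dt}\sqrt{E_m}\leq -\beta^{-1}\rho\sqrt{E_m}+c\,\gamma\sqrt{I_0}\,e^{-4\pi^2\beta^{-1}t}$ closed by Gronwall. The only difference is cosmetic packaging of the same orthogonality relation \eqref{sca}: you invoke the contraction property of $\mathcal{P}_{\psi^{\xi}}$ (for item 3 and, via Jensen, to absorb the positive quadratic term in item 1), where the paper uses the triangle inequality plus the minimization property \eqref{minw} and its Lemma~\ref{neg} followed by Cauchy--Schwarz.
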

The proofs of the results presented in this section are provided in Section~\ref{pro}.
   \begin{remark}
We would like to emphasize that our arguments hold under the assumption of existence of regular solutions. In particular, we suppose that the density $\psi(t,.)$ is sufficiently regular so that the algebric manipulations in the proofs (see Section~\ref{pro}) are valid.
\end{remark}
 \begin{remark}
 This remark is devoted to show how the rate of convergence of the dynamics \eqref{over} is improved thanks to PABF method. First of all, we mention a classical computation to get a rate of convergence for \eqref{over}. Precisely, if one denotes $\varphi (t,.)$ the probability density function of $X_t$ satisfying \eqref{over}, and $\varphi_{\infty}=Z_{\mu}^{-1}\mathrm{e}^{-\beta V}$ its longtime limit, then by standard computations (see for example [\refcite{arn:01}]), one obtains: 
 $$ \frac{d}{dt}H(\varphi(t,.)|\varphi_{\infty}) =-\beta^{-1}I(\varphi(t,.)|\varphi_{\infty}).$$
 Therefore, if $\varphi_{\infty}$ satisfies $LSI(R)$, then one obtains the estimate
 \begin{equation}\label{overcv}
 \exists R>0,\,\forall t>0,\quad H(\varphi(t,.)|\varphi_{\infty})\leq H(\varphi_0|\varphi_{\infty})\,\mathrm{e}^{-2\beta^{-1}Rt}.
  \end{equation}
From \eqref{kul}, we obtain that $\|\varphi(t,.)-\varphi_{\infty}\|_{L^1( \mathbb{T}^n)}$ converges exponentially fast to zero with rate $\beta^{-1}R$. The constant $R$ is known to be small if the metastable states are separated by large energy barriers or if high probability regions for $\mu$ are separated by large regions with small probability (namely $\mu$ is a multimodal measure).
Second, by Theorem~\ref{theo1}, one can show that $\nabla A_t$ converges exponentially fast to $\nabla A$ in $L^2(\psi^{\xi}_{\infty}(x_1,x_2)dx_1dx_2)-$norm at rate $\lambda=\beta^{-1}\min (\rho,4\pi^2)$. Indeed, since $\psi^{\xi}_{\infty}=1$,
\begin{align*}
\int_{\mathbb{T}^2}|\nabla A_t-\nabla A|^2dx_1dx_2&=\int_{\mathbb{T}^2}|\nabla A_t-\nabla A|^2\frac{\psi^{\xi}(t,x_1,x_2)}{\psi^{\xi}(t,x_1,x_2)}dx_1dx_2\\
&\leq \frac{8\gamma^2}{(1-\varepsilon)\rho}E_m(t),
\end{align*}
where $\varepsilon >0$ such that $\psi^{\xi}(t,x_1,x_2)\geq 1-\varepsilon$ (for more details refer to the proof of Corollary~\ref{fishco} in Section~\ref{pro}).
%\begin{align*}
%\int_{\mathbb{T}^2}|\nabla A_t-\nabla A|^2\psi^{\xi}_{\infty}(x_1,x_2)dx_1dx_2&\leq\int_{\mathbb{T}^2}|\nabla A_t-\nabla A|^2\psi^{\xi}(t,x_1,x_2)dx_1dx_2\\
%&\quad+\int_{\mathbb{T}^2}|\nabla A_t-\nabla A|^2(\psi^{\xi}_{\infty}(x_1,x_2)-\psi^{\xi}(t,x_1,x_2))dx_1dx_2\\
%&\leq \frac{8\gamma^2}{\rho}E_m(t)+\|\nabla A_t-\nabla A\|_{\infty}\|\psi(t,.)-\psi_{\infty}\|_{L^1( \mathbb{T}^n)},
%\end{align*}
%where we used the Sobolev injection $H^1(\mathbb{T}^2)\hookrightarrow L^{\infty}(\mathbb{T}^2)$ ($\nabla A_t-\nabla A\in H^1(\mathbb{T}^2)$ by Proposition~\ref{propmin}).
 This result must be compared with \eqref{overcv}. More precisely, $\lambda$ is related to the rate of convergence $4\pi^2$ at the macroscopic level, for equation \eqref{dif2} satisfied by $\psi^{\xi}$, and the rate of convergence $\rho$ at the microscopic level, coming from the logarithmic Sobolev inequalities satisfied by the conditional measures $\mu_{\infty,x_1,x_2}$. Of course, $\rho$ depends on the choice of the reaction coordinate. In our framework, we could state that a "good reaction coordinate" is such that $\rho$ is as large as possible. Typically, for good choices of $\xi$, $\lambda \gg R$, and the PABF dynamics converges to equilibrium much faster than the original dynamics \eqref{over}. This is typically the case if the conditional measures $\mu_{\infty,x_1,x_2}$ are less multimodal than the original measure $\mu$.
 \end{remark}
%\begin{remark}
%In the case where $W\neq 0$, one can obtain the same convergence result, since all what concerns is to proof this result on a compact subspace $\mathcal{M}$ of $\mathbb{R}^m$.
%\end{remark}

\begin{remark}(Extension to other geometric setting)\label{neu}\\
The results of Theorem~\ref{theo1} are easily generalized to the following setting: 

If $\mathcal{D}=\mathbb{R}^n$, $\xi(x)=(x_1,x_2)$ and $\mathcal{M}$ is a compact subspace of $\mathbb{R}^n$, then choose a confining potential $W$ (defined in \eqref{conf}) such that $Z_{W}=\int \mathrm{e}^{-\beta W}<+\infty$, $Z_{W}^{-1}\mathrm{e}^{-\beta W}$ satisfies $LSI(r^*)$ (for some $r^*>0$) and $W$ is convex potential, then Corollary~\ref{fishco} is satisfied with rate $2\beta^{-1}(r^*-\varepsilon)$, for any $\varepsilon\in (0,r^*)$ (refer to Corollary~1 in [\refcite{tony:08}] for further details). In this case, Neumann boundary conditions are needed to solve the Poisson problem \eqref{poi}:
  \begin{equation}\label{poisext}
\left\lbrace
\begin{aligned}
{\rm div}(\nabla A_t\psi^{\xi}(t,.))&={\rm div}(F_t\psi^{\xi}(t,.))& \mbox{in}\,\mathcal{M},\\
\frac{\partial A_t}{\partial n}&=F_t.n &\text{on}\,\partial \mathcal{M},
\end{aligned}
\right.
\end{equation}	 
where $n$ denotes the unit normal outward to $ \mathcal{M}$. The convergence rate $\lambda$ of Theorem~\ref{theo1} becomes $\beta^{-1}\min(\rho,r^*-\varepsilon)$. Neumann boundary conditions come from the minimization problem \eqref{minw} associated to the Euler-Lagrange equation. The numerical applications in Section~\ref{num} are performed in this setting.
%The Neumann boundary conditions cannot be taken in other forms. Indeed, if the solution of \eqref{poisext} exists, then the equation can be integrated on $\mathcal{M}$ and one gets (if the functions are smooth): \\
%$-\displaystyle\int_{\partial \mathcal{M}}F_t.n \psi^{\xi}(t,.)=\displaystyle \int_{\mathcal{M}}- {\rm div}(F_t\psi^{\xi}(t,.))=\int_{\mathcal{M}}-{\rm div}(\nabla A_t\psi^{\xi}(t,.))=-\int_{\partial \mathcal{M}} \nabla A_t\cdot n\psi^{\xi}(t,.)$.\\
%There is no chance that the solution is unique because adding a constant to any eventual solution provides a new solution. We must therefore impose an additional condition on $A$ hoped to show the uniqueness of the solution. A natural way to proceed is to impose $\displaystyle\int_{\mathcal{M}}A_t\psi^{\xi}=0$. 
% If $\mathcal{D}=\mathbb{R}^n$ and $\xi=(x_1,x_2)$, we have to add the confining potential $W$ (defined in \eqref{conf}) to the original dynamics \eqref{pabf}, and we still need Neumann boundary conditions on the compact subspace $\mathcal{M}$ of $\mathbb{R}^m$, since all it matters is to prove convergence on $\mathcal{M}$.

\end{remark}
%The regularity requested to $F_t.n$ is natural. Indeed, we expect to find $A_t$ in $H^1_{\psi^{\xi}}(\mathcal{M})$ (or in one of his subspaces) such that $\nabla A_t\in L^2_{\psi^{\xi}}\times L^2_{\psi^{\xi}}$. As more, we ask that ${\rm div}(\nabla A\psi^{\xi}(t,.))={\rm div}(F_t\psi^{\xi}(t,.))\in L^2(\mathcal{M})$, we see if the solution exists, we will have necessarily $\nabla A\in H_{\psi^{\xi}}({\rm div},\mathcal{M})$, which gives a weak sense of the trace on the board of $\frac{\partial A_t}{\partial n}$ in the space $H^{-\frac{1}{2}}(\partial \mathcal{M})$.\\
 %Indeed, there exists a so-called "compatibility" condition between the data provided, which is necessary to check the existence of a solution.
% If $D$ is the whole space $\mathbb{R}^n$ and $M=\mathbb{R}^2$, the first equation of \eqref{ABF} becomes:\\
%$$dX_t=-\nabla(V-A_t\circ\xi+W\circ \xi)(X_t)dt+\sqrt{2\beta^{-1}}dW_t,$$
%where $W\circ \xi$ has been added and it is needed only when $M$ is unbounded (we recall that $M$ is the domain where the reaction coordinate lives). $W$ is called the {\em confining potential} chosen so that the law of $\xi(X_t)$ converges exponentially fast to its longtime, more precisely, the Fisher information (see \eqref{fish}) associated with this law, converges exponentially fast to zero, and $W$ will appear in the diffusion equation satisfied by $\psi^{\xi}$ (see \eqref{diff}).
\begin{remark}(Extension to more general reaction coordinates)\\
In this section, we have chosen $\xi(x_1,...,x_n)=(x_1,x_2)$. The results can be extended to the following settings: 
\begin{enumerate}
\item In dimension one, the Helmholtz projection has obviously no sense. If $\mathcal{D}=\mathbb{T}^n$ and $\xi(x)=x_1$, then $F_t$ converges to $A'$, which is a derivative of a periodic function and thus $\displaystyle\int_{\mathbb{T}}A'=0$. Since $\displaystyle\int_{\mathbb{T}}F_t$ is not necessary equal to zero, one can therefore take a new approximation $A_t^{'}=F_t-\displaystyle \int_{\mathbb{T}}F_t$, which approximates $A'$ at any time $t$. The convergence results of this section can be extended to this setting, to show that $A'_t$ converges exponentially fast to $A'$.

\item More generally, for a reaction coordinate with values in $\mathbb{T}^m$, the convergence results presented in this paper still hold under the following orthogonality condition:
\begin{equation}\label{ortho}
\forall i\neq j,\,\nabla \xi_i\cdot\nabla \xi_j=0.
\end{equation}
In the case when \eqref{ortho} does not hold, it is possible to resort to the following trick used for example in metadynamics (refer to [\refcite{bu:06,tony:07}]). The idea is to introduce an additional variable $z$ of dimension $m$, and an extended potential $V_{\xi}(x,z)=V(x)+\frac{\kappa}{2}|z-\xi(x)|^2$, where $\kappa$ is a penalty constant. The reaction coordinate is then chosen as $\xi_{meta}(x,z)=z$, so that the associated free energy is:
$$A_{\xi}(z)=-\beta^{-1}\displaystyle \ln \int_{\mathcal{D}}e^{-\beta V_{\xi}(x,z)}dx,$$
which converges to $A(z)$ (defined in \eqref{free0}) when $\kappa$ goes to infinity. The extended PABF dynamics can be written as:
$$
\left\lbrace
\begin{aligned}
dX_t &= - \left( \nabla V(X_t) + \kappa \sum_{i=1}^m (\xi_i(X_t) - Z_{i,t}) \nabla \xi_i (X_t) \right) \, dt + \sqrt{2 \beta^{-1}} dW_t,\\
dZ_t &= \kappa (\xi(X_t) - \nabla E_t (Z_t)) \, dt + \sqrt{2 \beta^{-1}} d \overline{W}_t,\\
\nabla E_t &= {\mathcal P}_{\psi^{\xi_{meta}}} \left( G_t \right),\\
G_t(z) &= \mathbb{E}(\xi(X_t)|Z_t=z),
\end{aligned}
\right.
$$
where $\overline{W}_t$ is a $m-$dimensional Brownian motion independent of $W_t$. The results of Theorem~\ref{theo1} apply to this extended PABF dynamics.
\end{enumerate}

\end{remark}

\section{Numerical experiments}\label{num}

\subsection{Presentation of the model}
 We consider a system composed of $N$ particles $(q_i)_{0\leq i\leq N-1}$ in a two-dimensional periodic box of side length $L$. Among these particles, three particles (numbered $0$, $1$ and $2$ in the following) are designated to form a trimer, while the others are solvent particles. In this model, a trimer is a molecule composed of three identical particles linked together by two bonds (see Figure~\ref{tri}).
 
  \subsubsection{Potential functions}

All particles, except the three particles forming the trimer, interact through the purely repulsive WCA pair potential, which is the Lennard-Jones (LJ) potential truncated at the LJ potential minimum:
$$V_{WCA}(d)=\left\{
\begin{array}{cc}
\varepsilon+4\varepsilon\left[ \left(\frac{\sigma}{d}\right)^{12}-\left(\frac{\sigma}{d}\right)^{6} \right]&\qquad \mbox{if} \, d \leq d_0,\\
0 &  \qquad \mbox{if} \, d \geq d_0 ,\\
\end{array}
\right.
$$
where $d$ denotes the distance between two particles, $\varepsilon$ and $\sigma$ are two positive parameters and $d_0 = 2^{1/6}\sigma$.

A particle of the solvent and a particle of the trimer also interact through the potential $V_{WCA}$. The interaction potential between two particles of the trimer ($q_0/q_1$ or $q_1/q_2$) is a double-well potential (see Figure~\ref{pot2}):
\begin{equation}\label{vs}
V_{S}(d)=h\left[ 1- \frac{(d-d_1-\omega)^2}{\omega^2}\right]^{2},
\end{equation}
where $d_1$, $h$ and $\omega$ are positive parameters.\\
The potential $V_{S}$ has two energy minima. The first one, at $d=d_1$, corresponds to the compact bond. The second one, at $d=d_1+2\omega$, corresponds to the stretched bond. The height of the energy barrier separating the two states is $h$. 

In addition, the interaction potential between $q_0$ and $q_2$ is a Lennard-Jones potential:
 $$V_{LJ}(d)=4\varepsilon '\left[\left(\frac{\sigma'}{d}\right)^{12}-\left(\frac{\sigma'}{d}\right)^{6} \right],$$
where $\varepsilon '$ and $\sigma '$ are two positive parameters.

Finally, the three particles of the trimer also interact through the following potential function on the angle $\theta$ formed by the vectors $\overrightarrow{q_1q_0}$ and $\overrightarrow{q_1q_2}$:
 $$V_{\theta_0}(\theta)=\frac{k_{\theta}}{2}\left(\cos(\theta)-\cos(\theta_0) \right)^2,$$
where $\theta_0$ is the equilibrium angle and $k_{\theta}$ is the angular stiffness. Figure~\ref{tri} presents a schematic view of the system.

   \begin{figure}[htp]
    \centering
  \begin{tabular}{ccc}
    \includegraphics[width=40mm]{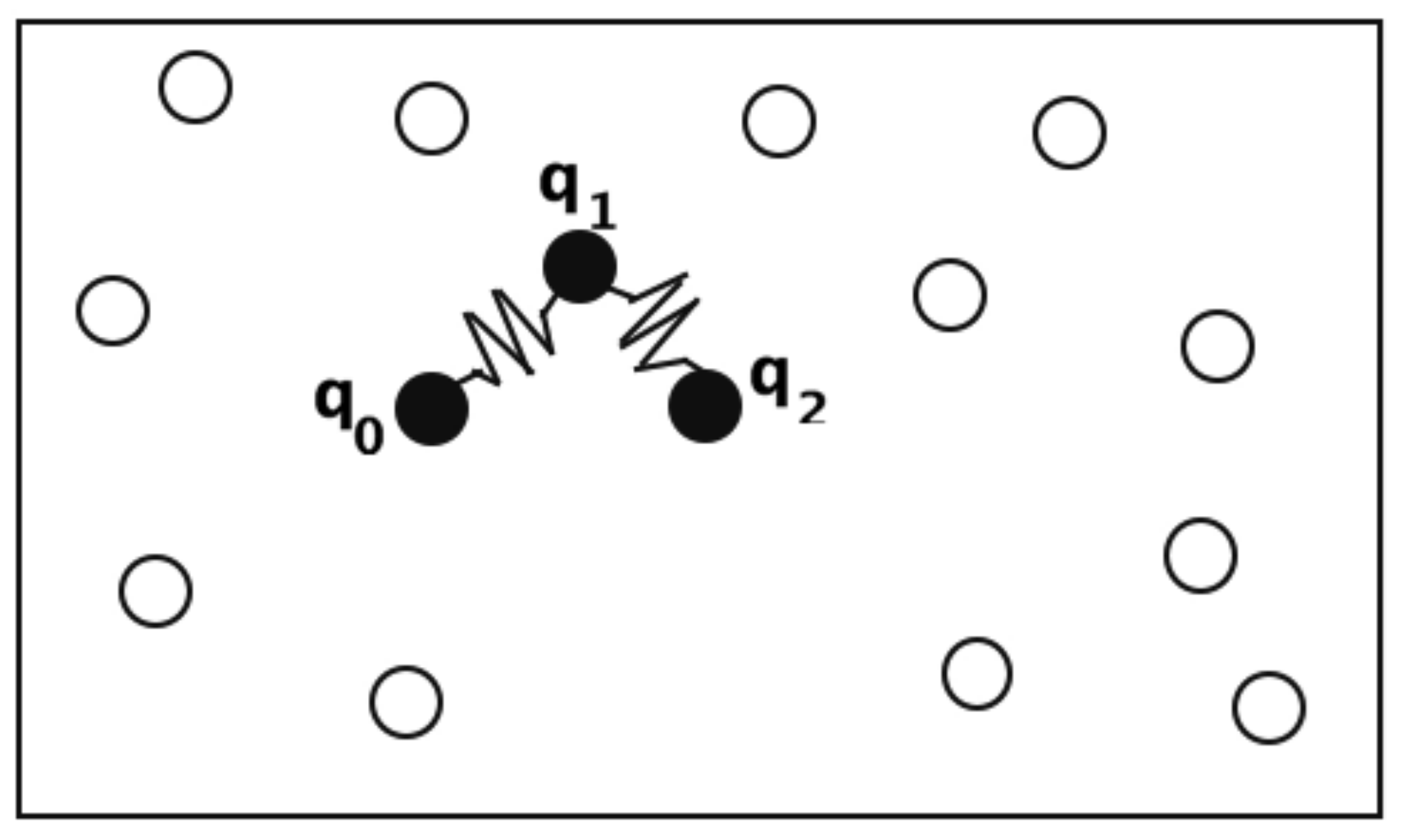}&
    \includegraphics[width=40mm]{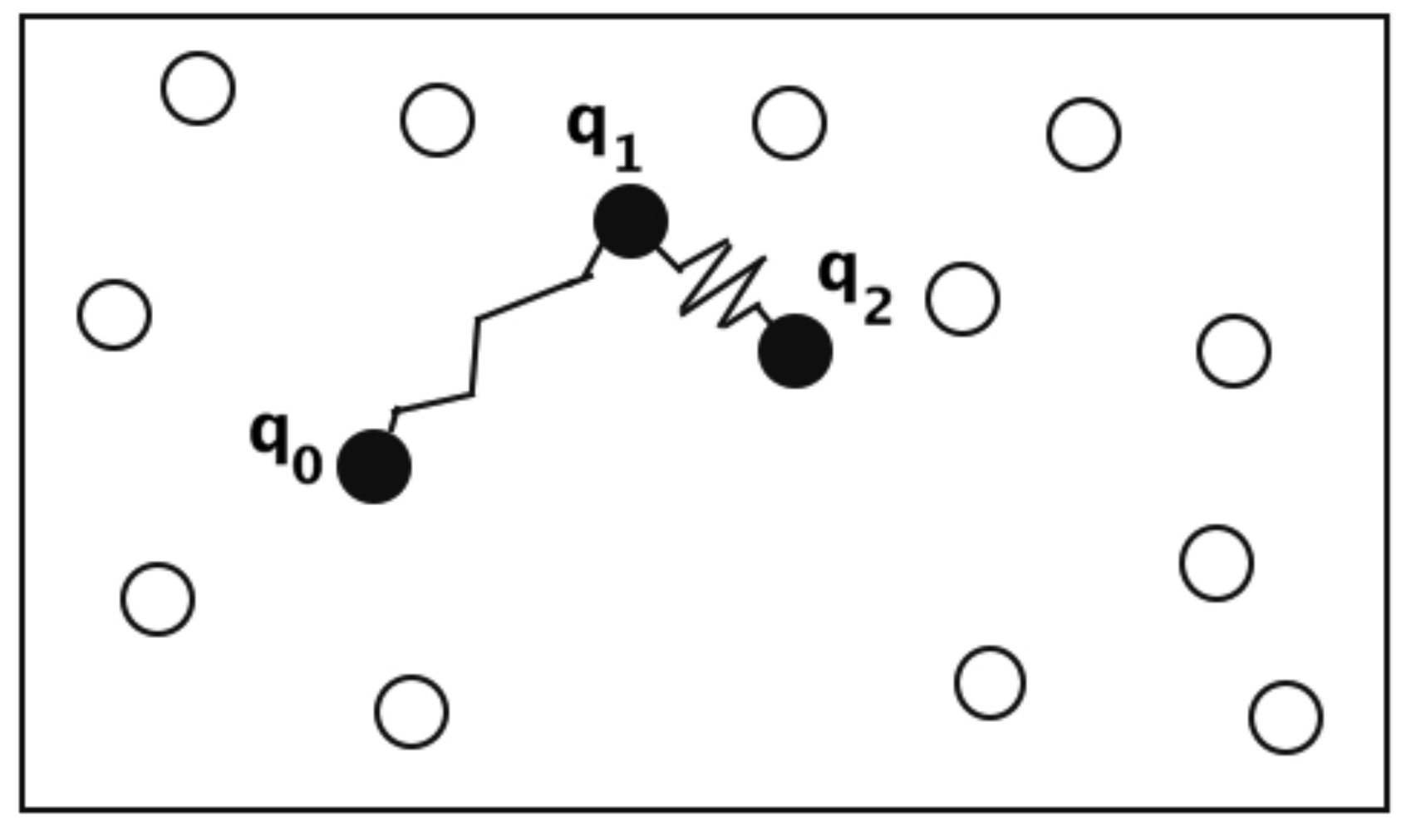}&
    \includegraphics[width=40mm]{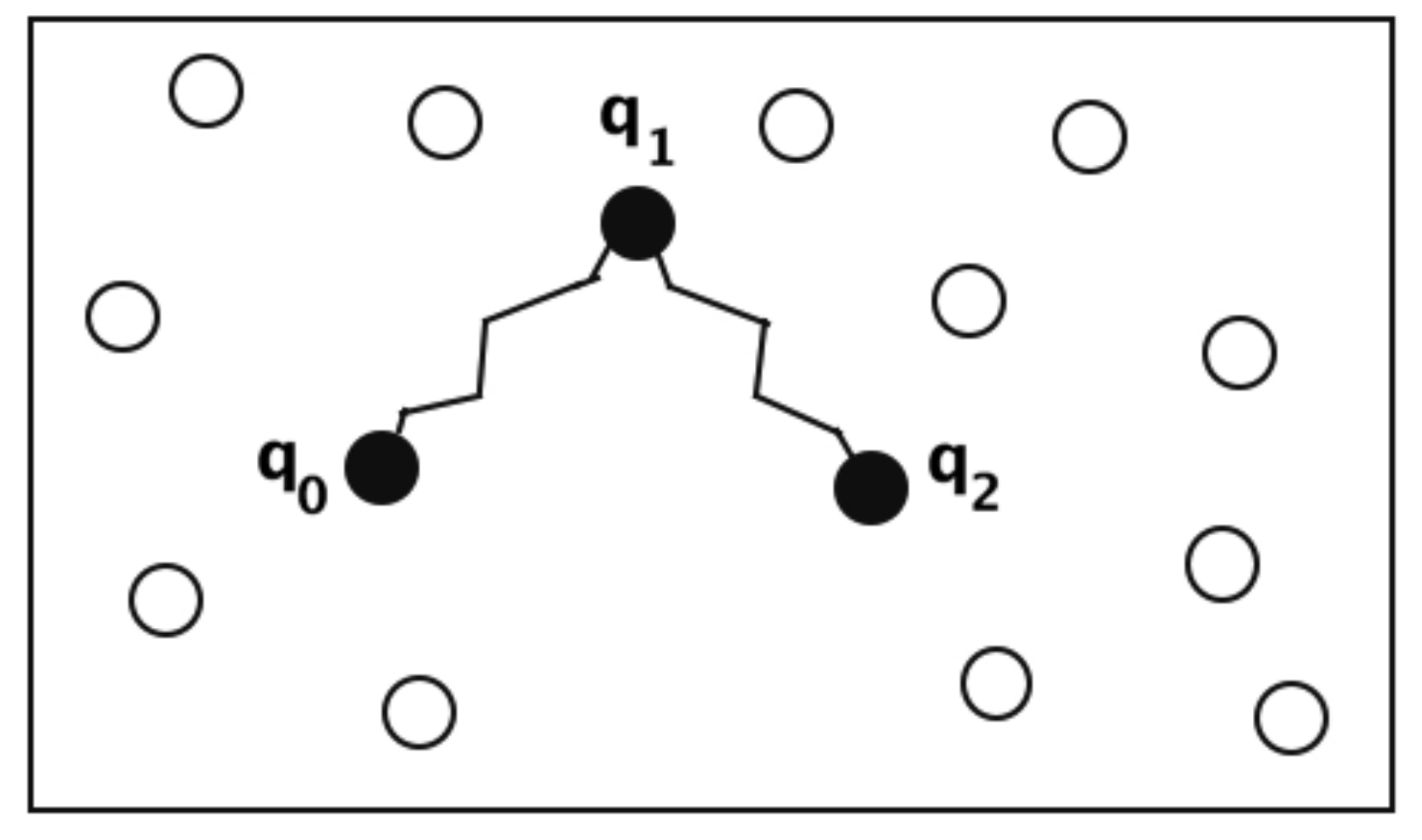}\\
  \end{tabular}
\caption{Trimer ($q_0, q_1, q_2$). Left: compact state; Center: mixed state; Right: stretched state.}\label{tri}
\end{figure}
 
 The total energy of the system is therefore, for $q \in (L\mathbb{T})^{2N}$: 
 \begin{align*}
  V(q)=&\displaystyle \sum_{3\leq i<j\leq N-1}V_{WCA}(|q_i-q_{j}|)+\sum_{i=0}^{2}\sum_{j=3}^{N-1}V_{WCA}(|q_i-q_{j}|)\\
&+  \sum_{i=0}^{1} V_S(|q_i-q_{i+1}|) + V_{LJ}(|q_0-q_{2}|) + V_{\theta_0}(\theta).
 \end{align*}
  %\begin{figure}[htbp]  
 %\centerline{\hbox{
 %\psfig{figure=,height=50mm}
%\psfig{figure=,height=50mm}
 %}}
 %\caption{Left: WCA potential; Right: Lennard-Jones potential.}\label{pot1}
 %\end{figure} 
 
 \begin{figure}[htbp]  
 \centerline{\psfig{figure=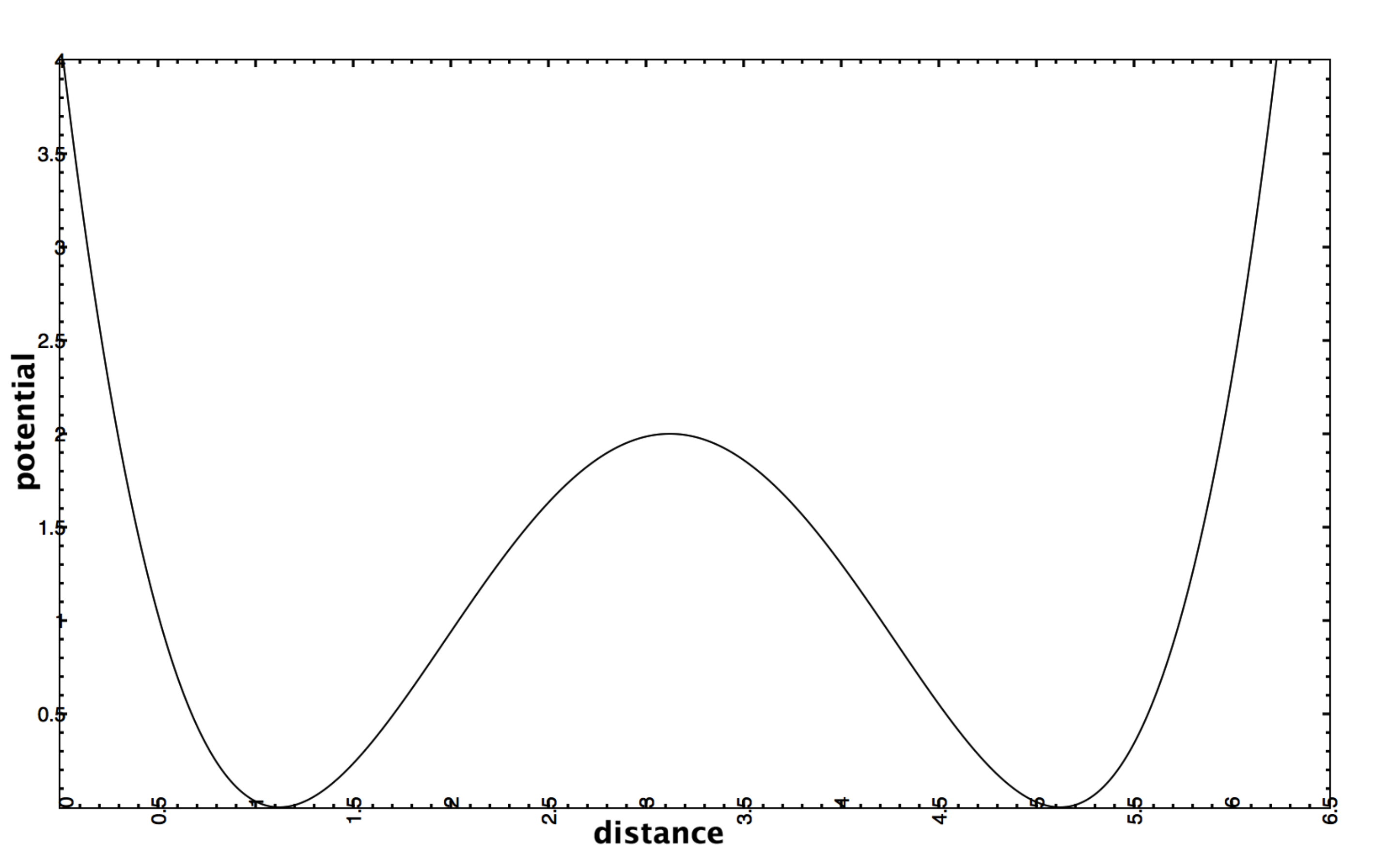,height=50mm}}
\caption{Double-well potential \eqref{vs}, with $d_1=2^{1/6}$, $\omega=2$ and $h=2$.}\label{pot2}
\end{figure}

    \subsubsection{Reaction coordinate and physical parameters}
The reaction coordinate describes the transition from compact to stretched state in each bond. It is the normalised bond length of each bond of the trimer molecule. More precisely, the reaction coordinate is $\xi=(\xi_1,\xi_2)$, with $\xi_1(q)=\frac{|q_0- q_1|-d_0}{2\omega}$ and $\xi_2(q)=\frac{|q_1- q_2|-d_0}{2\omega}$. For $i=1,2$, the value $\xi_i=0$ refers to the compact state (i.e. $d=d_0$) and the value $\xi_i=1$ corresponds to the stretched state (i.e. $d=d_0+2\omega$).
 
 We apply ABF and PABF dynamics to the trimer problem described above. The inverse temperature is $\beta = 1$, we use $N = 100$ particles ($N-3$ solvent particles and the trimer) and the box side length is $L=15$. The parameters describing the WCA and the Lennard-Jones interactions are set to $\sigma = 1$, $\varepsilon=1$, $\sigma '=1$, $\varepsilon '=0.1$, $d_0=2^{1/6}$, $d_1=2^{1/6}$ and the additional parameters for the trimer are $\omega = 2$ and $h = 2$. The parameters describing the angle potential are: $\theta_0$ such that $\cos(\theta_0)=1/3$ and $k_{\theta}=1$ (we refer to [\refcite{rap:97}], Section 10.4.2, for the choice of such parameters). The initial condition on the trimer is as follows: Both bonds $q_0q_1$ and $q_1q_2$ are in compact state, which means that the distance between $q_0$ and $q_1$ and the distance between $q_1$ and $q_2$ are equal to $d_0$. Moreover, the initial bond angle is $\theta_0$.

\subsubsection{Numerical methods and numerical parameters}  
Standard and projected ABF methods are used with $N_{replicas} = 100$ replicas of the system evolving according to the overdamped Langevin dynamics discretized with a time-step $\Delta t = 2.5\times 10^{-4}$. 
 The reaction coordinate space of interest is taken of the form $\mathcal{M}= [\xi_{min}, \xi_{max}]\times  [\xi_{min}, \xi_{max}] $, where $\xi_{min}=-0.2$ and $\xi_{max}=1.2$. $\mathcal{M}$ is discretized into $N_{bins}\times N_{bins} = 50\times 50=2500$ bins of equal sizes and $\delta=\delta_x=\delta_y=\frac{\xi_{max}-\xi_{min}}{N_{bins}}=0.028$ denotes the size of each bin along both axes. 
 
To implement the ABF and PABF method, one needs to approximate $F_t^i(x,y)=\displaystyle\mathbb{E}[f_i(x,y)|\xi(x,y)=(\xi_1(x,y),\xi_2(x,y))],\,i=1,2$. The mean force $F_t$ is estimated in each bin as a combination of plain trajectorial averages and averages over replicas. It is calculated at each time as an average of the local mean force in the bin over the total number of visits in this bin. More precisely, at time $t$ and for $l=1,2$, the value of the mean force in the $(i,j)^{th}$ bin is: 
 \begin{equation}\label{F_t}
  F_t^{l}(i,j)=\displaystyle \frac{\displaystyle\sum_{t'\leq t}\sum_{k=1}^{N_{replicas}}f_{l}(q_{k,t'})1_{\{indx(\xi(q_{k,t'}))=(i,j)\}}}{\displaystyle\sum_{t'\leq t}\sum_{k=1}^{N_{replicas}}1_{\{indx(\xi(q_{k,t'}))=(i,j)\}}},
 \end{equation}
where $q_{k,t}$ denotes the position $(x_k,y_k)$ at time $t$, $f=(f_1,f_2)$ is defined in \eqref{cmf} and $indx(\xi(q_{k,t'}))$ denotes the number of the bin where $\xi(q_{k,t'})$ lives, i.e.
 $$indx(\xi(q))=\displaystyle\left(\left[\frac{\xi_1(q)-\xi_{min}}{\delta}\right]_+,\left[\frac{\xi_2(q)-\xi_{min}}{\delta}\right]_+\right),\,\forall q\in\mathcal{M}.$$ 
If the the components of the index function (i.e. $indx$) are either equal to $-1$ or to $N_{bins}$, it means that we are outside $\mathcal{M}$ and then the confining potential is non zero, and defined as:
 $$W(\xi(q_{k,t}))=\displaystyle\sum_{i=1}^2\left[ 1_{\{\xi_i(q_{k,t})\geq \xi_{max}\}}(\xi_i(q_{k,t})-\xi_{max})^2+1_{\{\xi_i(q_{k,t})\leq \xi_{min}\}}(\xi_i(q_{k,t})-\xi_{min})^2\right].$$
To construct the PABF method, the solution to the following Poisson problem with Neumann boundary conditions are approximated:
  \begin{equation}\label{poistri}
\left\{
\begin{array}{llll}
\Delta A&=&{\rm div}F&\quad\mbox{in }\mathcal{M}= [\xi_{min}, \xi_{max}]\times  [\xi_{min}, \xi_{max}],\\
\frac{\partial A}{\partial n}&=&F\cdot n&\quad \mbox{on }\partial\mathcal{M},
\end{array}
\right.
\end{equation}	
where $n$ denotes the unit normal outward to $\mathcal{M}$. We use for simplicity in the numerical experiments the standard Helmholtz problem, without the weight $\psi^{\xi}$. Problem~\eqref{poistri} is solved using finite element method of type $Q^1$ on the quadrilateral mesh defined above, with nodes $(x_i,y_j)$, where $x_i=\xi_{min}+i\delta$ and $y_j=\xi_{min}+j\delta$, for $i,j=0,.., N_{bins}$. The space $\mathcal{M}$ is thus discretized into $N_T=N_{bins}^2$ squares, with $N_s=(N_{bins}+1)^2$ nodes.  The associated variational formulation is the following:\\
   \begin{equation*}
\left\{
\begin{array}{l}
 \mbox{Find}\, A\in H^1(\mathcal{M})/\mathbb{R}\mbox{ such that}\\
 \displaystyle \int_{\mathcal{M}}\nabla A\cdot\nabla v=\int_{\mathcal{M}}F\cdot\nabla v,\,\forall\, v\in  H^1(\mathcal{M})/\mathbb{R}.
\end{array}
\right.
\end{equation*}

\subsection{ Comparison of the methods}
  In this section, we compare results obtained with three different simulations: without ABF, with ABF and with projected ABF (PABF). First, it is observed numerically that both ABF methods overcome metastable states. Second, it is illustrated how PABF method reduces the variance of the estimated mean force compared to ABF method. As a consequence of this variance reduction, we observe that the convergence of $\nabla A_t$ to $\nabla A$ with the PABF method is faster than the convergence of $F_t$ to $\nabla A$ with the ABF method. 
%   the error of the calculated free energy of the PABF method decreases as time increases and it is always lower than that of standard ABF. In addition, numerical simulations show that the distribution function $\psi^{\xi}$ in PABF method converges faster than in standard ABF method. 
  
  \subsubsection{Metastability}
To illustrate numerically the fact that ABF methods improve the sampling for metastable processes, we observe the variation, as a function of time, of the two metastable distances (i.e. the distance between $q_0$ and $q_1$, and the distance between $q_1$ and $q_2$). On Figures~\ref{dst1} and \ref{dst2}, the distance between $q_1$ and $q_2$ is plotted as a function of time for three dynamics: without ABF, ABF and PABF.

Both ABF methods allow to switch faster between the compact and stretched bond and thus to better explore the set of configurations. Without adding the biasing term, the system remains trapped in a neighborhood of the first potential minimum (i.e. $d_0\simeq 1.12$) region for $20$ units of time at least (see Figure~\ref{dst1}), while when the biasing term is added in the dynamics, many jumps between the two local minima are observed (see Figure~\ref{dst2}). 

 \begin{figure}[htp]
    \centering
    \includegraphics[width=62mm]{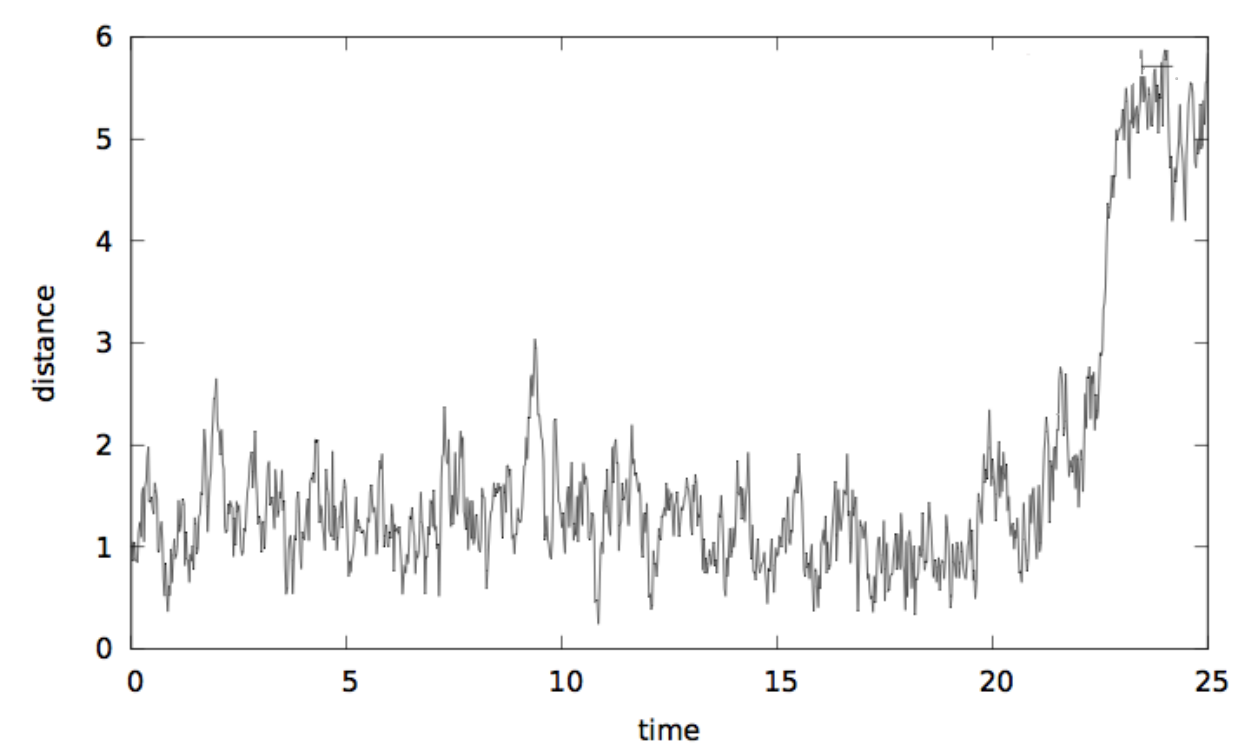}\\
\caption{Without ABF.}\label{dst1}
\end{figure}

 \begin{figure}[htp]
    \centering
  \begin{tabular}{ccc}
    \includegraphics[width=62mm]{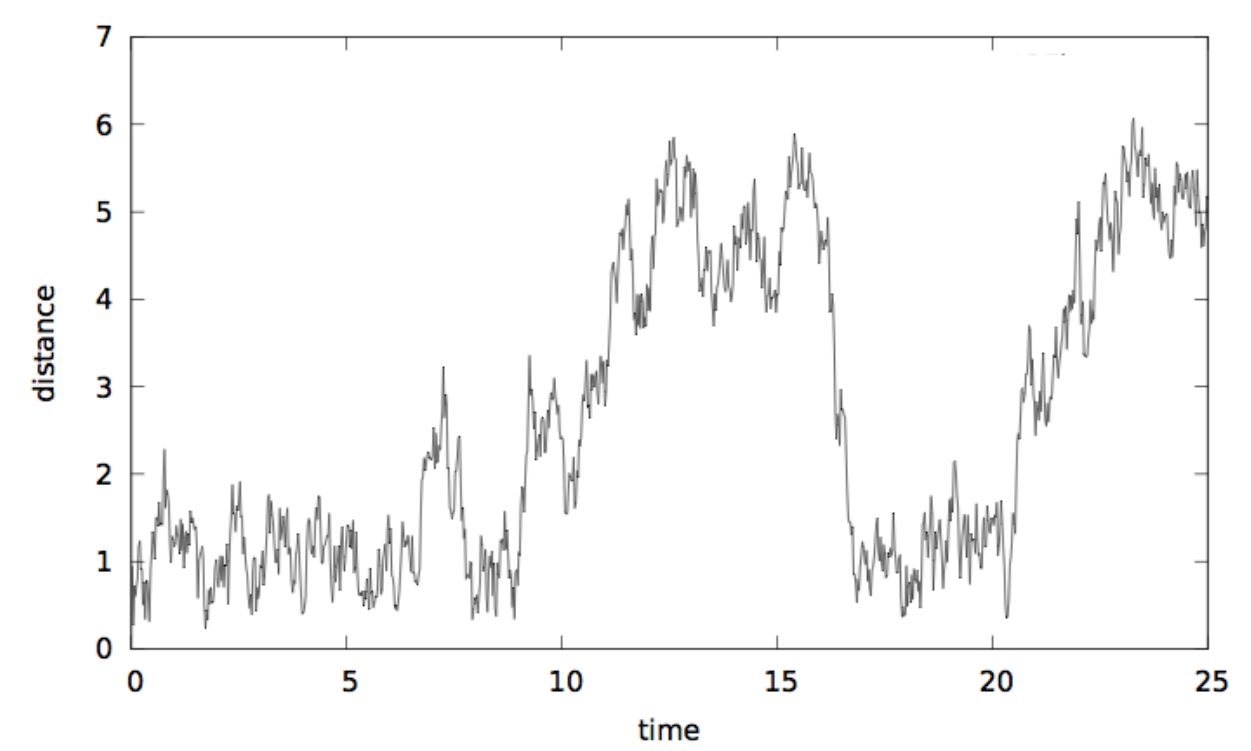}&
    \includegraphics[width=62mm]{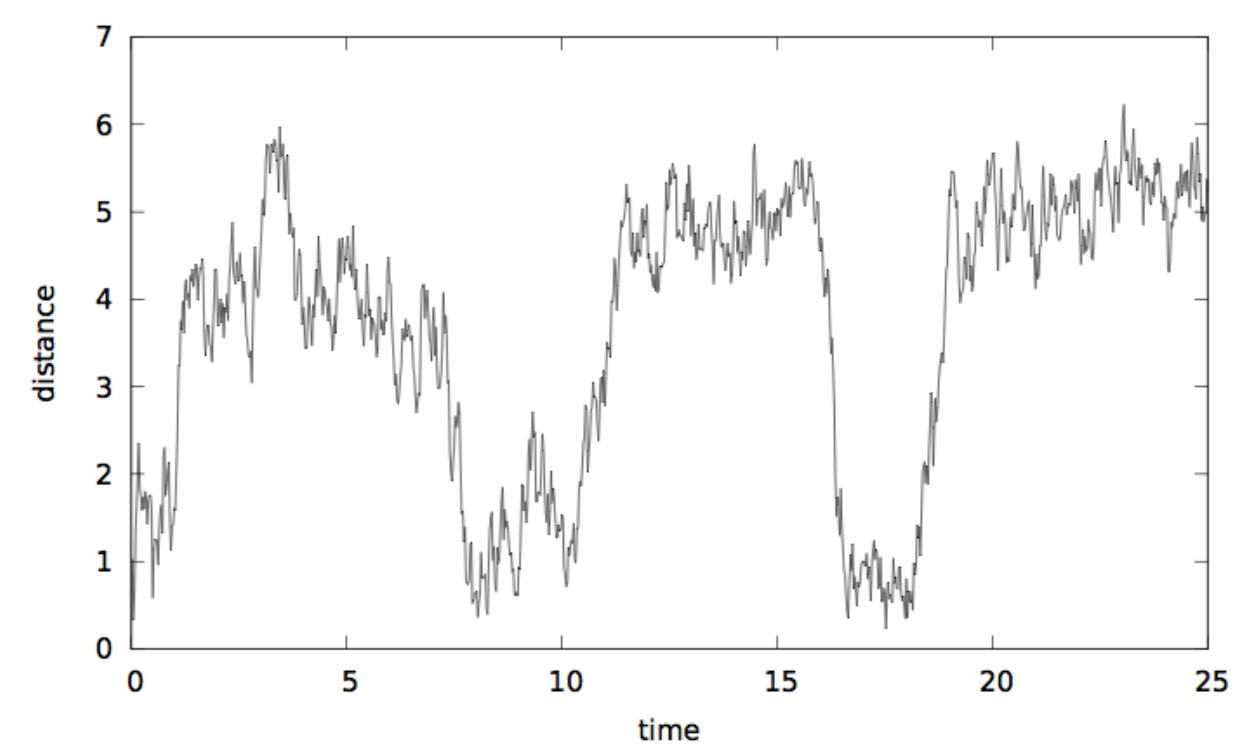}
  \end{tabular}
\caption{Left: ABF; Right: PABF.}\label{dst2}
\end{figure}

 \subsubsection{Variance reduction}
 
Since we use Monte-Carlo methods to approximate $\nabla A_t$, the variance is an important quantity to assess the quality of the result. The following general proposition shows that projection reduces the variance.
\begin{proposition}\label{vared}
Let $F$ be a random function from $ \mathbb{T}^2$ into $ \mathbb{R}^2$ and belongs to $H(\mathrm{div},\mathbb{T}^2)$, and define $\mathcal{P}=\mathcal{P}_1$ (i.e. without weight) the projection on gradient vector fields defined in Section \ref{minpb}. Then, the variance of $\mathcal{P}(F)$ is smaller than the variance of $F$:
$$\displaystyle \int_ \mathcal{M}{\rm Var}(\mathcal{P} (F))\leq \int_ \mathcal{M}{\rm Var}(F),$$
where, for any vector field $F$, $\mathrm{Var}(F)=\mathbb{E}(|F|^2)-\mathbb{E}(|F|)^2$ and $|F|$ being the Euclidian norm.
\end{proposition}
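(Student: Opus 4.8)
The plan is to exploit that, in the unweighted case, $\mathcal{P}=\mathcal{P}_1$ is nothing but the orthogonal projection of the Hilbert space $\mathcal{H}:=L^2(\mathbb{T}^2)\times L^2(\mathbb{T}^2)$ onto the closed subspace of gradient vector fields $\{\nabla_{x_1^2}h:h\in H^1(\mathbb{T}^2)\}$, whose orthogonal complement is the space of divergence-free fields. Indeed, the Helmholtz decomposition $F=\nabla_{x_1^2}A+R$ with ${\rm div}_{x_1^2}R=0$ is orthogonal in $\mathcal{H}$ (integrate by parts on $\mathbb{T}^2$: $\int\nabla_{x_1^2}A\cdot R=-\int A\,{\rm div}_{x_1^2}R=0$), and the minimization characterization \eqref{minw} with $\varphi=1$ shows precisely that $\nabla_{x_1^2}A=\mathcal{P}(F)$ is the element of that subspace closest to $F$. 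Hence $\mathcal{P}$ and $Q:=I-\mathcal{P}$ are bounded, self-adjoint, idempotent, of operator norm $1$, and for every $G\in\mathcal{H}$ one has the Pythagorean identity $\|G\|_{\mathcal{H}}^2=\|\mathcal{P}G\|_{\mathcal{H}}^2+\|QG\|_{\mathcal{H}}^2$.

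First I would record that, since $\mathcal{P}$ is a bounded linear operator on $\mathcal{H}$ and $F$ is a square-integrable $\mathcal{H}$-valued random variable, expectation commutes with $\mathcal{P}$ and with $Q$ (Fubini, or basic properties of the Bochner integral): $\mathbb{E}[\mathcal{P}(F)]=\mathcal{P}(\mathbb{E}[F])$ and $\mathbb{E}[Q(F)]=Q(\mathbb{E}[F])$. Integrating the definition of the (pointwise) variance over $\mathcal{M}=\mathbb{T}^2$ then gives
$$\int_{\mathcal{M}}{\rm Var}(\mathcal{P}(F))=\mathbb{E}\,\|\mathcal{P}(F)\|_{\mathcal{H}}^2-\|\mathcal{P}(\mathbb{E} F)\|_{\mathcal{H}}^2,$$
together with the same identity for $F$ in place of $\mathcal{P}(F)$.

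Next, I would apply the Pythagorean identity to $G=F(\omega)$ and take expectation, and separately to $G=\mathbb{E} F$, so that $\mathbb{E}\|F\|_{\mathcal{H}}^2=\mathbb{E}\|\mathcal{P}F\|_{\mathcal{H}}^2+\mathbb{E}\|QF\|_{\mathcal{H}}^2$ and $\|\mathbb{E} F\|_{\mathcal{H}}^2=\|\mathcal{P}\mathbb{E} F\|_{\mathcal{H}}^2+\|Q\mathbb{E} F\|_{\mathcal{H}}^2$. Substituting these into the target inequality $\int_{\mathcal{M}}{\rm Var}(\mathcal{P}(F))\le\int_{\mathcal{M}}{\rm Var}(F)$ and cancelling the common $\mathcal{P}$-terms, the claim reduces exactly to
$$\|Q(\mathbb{E} F)\|_{\mathcal{H}}^2\le\mathbb{E}\,\|Q(F)\|_{\mathcal{H}}^2,$$
which is Jensen's inequality for the convex function $h\mapsto\|h\|_{\mathcal{H}}^2$ evaluated along the linear map $Q$ (using once more $\mathbb{E}[Q(F)]=Q(\mathbb{E} F)$). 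This closes the proof.

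The point that needs care — and the reason the statement is not entirely trivial — is the bookkeeping of signs. The naive ``$\mathcal{P}$ is a contraction'' estimate only yields $\mathbb{E}\|\mathcal{P}F\|^2\le\mathbb{E}\|F\|^2$ together with $\|\mathcal{P}\mathbb{E} F\|^2\le\|\mathbb{E} F\|^2$, and the second of these points the wrong way once the mean term is subtracted, so contraction alone is insufficient. The decisive move is to pass to the complementary projection $Q=I-\mathcal{P}$, for which the quantity one must control is the \emph{variance of the residual} $Q(F)$ versus the squared norm of its mean, and that comparison is precisely Jensen; orthogonality of the Helmholtz decomposition is exactly what allows one to trade $\mathcal{P}$-terms for $Q$-terms. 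The only remaining technicality is the (routine) justification of measurability/integrability of the $\mathcal{H}$-valued field $F$ and of the exchange of $\mathbb{E}$ with $\mathcal{P}$ and $Q$. Note finally that this orthogonality is also why the proposition is stated for the unweighted projection $\mathcal{P}_1$ only: a weighted $\mathcal{P}_\varphi$ is orthogonal in $L^2_\varphi$ rather than in $L^2$, and the integrated variance involves the unweighted norm.
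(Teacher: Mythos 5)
Your proof is correct and follows essentially the same route as the paper: orthogonality of the Helmholtz decomposition (Pythagoras in $L^2(\mathbb{T}^2)\times L^2(\mathbb{T}^2)$) combined with the fact that the linear projection commutes with expectation. Your final "Jensen" step, $\|Q(\mathbb{E}F)\|^2\le\mathbb{E}\|Q(F)\|^2$, is exactly the nonnegativity of $\int_{\mathcal{M}}{\rm Var}(F-\mathcal{P}(F))$, which is the term the paper drops from the identity $\int_{\mathcal{M}}{\rm Var}(F)=\int_{\mathcal{M}}{\rm Var}(F-\mathcal{P}(F))+\int_{\mathcal{M}}{\rm Var}(\mathcal{P}(F))$.
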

\begin{proof}
Let $F$ be a random vector field of $ H({\rm div},{\mathbb{T}^2})$. Let us introduce $\mathcal{P}(F)\in H^1(\mathbb{T}^2)\times H^1(\mathbb{T}^2)$ its projection. Notice that by the linearity of the projection $\mathcal{P}(\mathbb{E}(F))=\mathbb{E}(\mathcal{P}(F))$. By definition of $\mathcal{P}(F)$, one gets:
$$\displaystyle\int_{\mathbb{T}^2}(F-\mathcal{P}(F))\cdot\nabla h=0,\,\forall h\in H^1(\mathbb{T}^2).$$
Therefore, using Pythagoras and the fact that $\mathcal{P}(F)$ is agradient,
$$\displaystyle\int_{\mathbb{T}^2}|F|^2=\int_{\mathbb{T}^2}|F-\mathcal{P}(F)|^2+\int_{\mathbb{T}^2}|\mathcal{P}(F)|^2$$
and
$$\displaystyle\int_{\mathbb{T}^2}|F-\mathbb{E}(F)|^2=\int_{\mathbb{T}^2}|F-\mathbb{E}(F)-\mathcal{P}(F-\mathbb{E}(F))|^2+\int_{\mathbb{T}^2}|\mathcal{P}(F-\mathbb{E}(F))|^2.$$
Using the linearity of $\mathcal{P}$, we thus obtain
$$\displaystyle\int_{\mathbb{T}^2}{\rm Var}(F)=\int_{\mathbb{T}^2}{\rm Var}(F-\mathcal{P}(F))+\int_{\mathbb{T}^2}{\rm Var}(\mathcal{P}(F)),$$
which concludes the proof.

\end{proof}
%-----------------------------------------------------------------
%-----------------------------------------------------------------
%-----------------------------------------------------------------
We illustrate the improvement of the projected method in terms of the variances of the biasing forces, by comparing ${\rm Var}(\nabla A_t)={\rm Var}(\partial_1 A_t)+{\rm Var}(\partial_2 A_t)$ (for the PABF method) and ${\rm Var}(F_t)={\rm Var}(F_t^1)+{\rm Var}(F_t^2)$ (for the ABF method). Figure~\ref{var} shows that the variance for the projected ABF method is smaller than for the standard ABF method.

We have $N_{bins}\times N_{bins}=2500$ variable for each term (i.e. $\partial_1A_t$, $\partial_2 A_t$, $F_t^1$ and $F_t^2$). The variances are computed using $20$ independent realizations as follows:
$$\displaystyle {\rm Var}( F_t^1)=\frac{1}{2500}\displaystyle \sum_{i,j=1}^{50}\frac{1}{20}\sum_{k=1}^{20}F_t^{1,k}(x_i,y_j)^2 - \frac{1}{2500}\displaystyle \sum_{i,j=1}^{50}\left (\frac{1}{20}\sum_{k=1}^{20}F_t^{1,k}(x_i,y_j)\right )^2.$$
 Note that four averages are involved in this formula: an average with respect to the space variable, an average over the  $20$ Monte-Carlo realizations, an average over replicas and a trajectorial average (the last two averages are more explicit in \eqref{F_t}). Notice that since the variance of the biasing force is smaller with PABF, one may expect better convergence in time results. This will be investigated in Section~\ref{error} and Section~\ref{distrint}.
 
 \begin{figure}[htp]
  \centering
    \includegraphics[width=100mm]{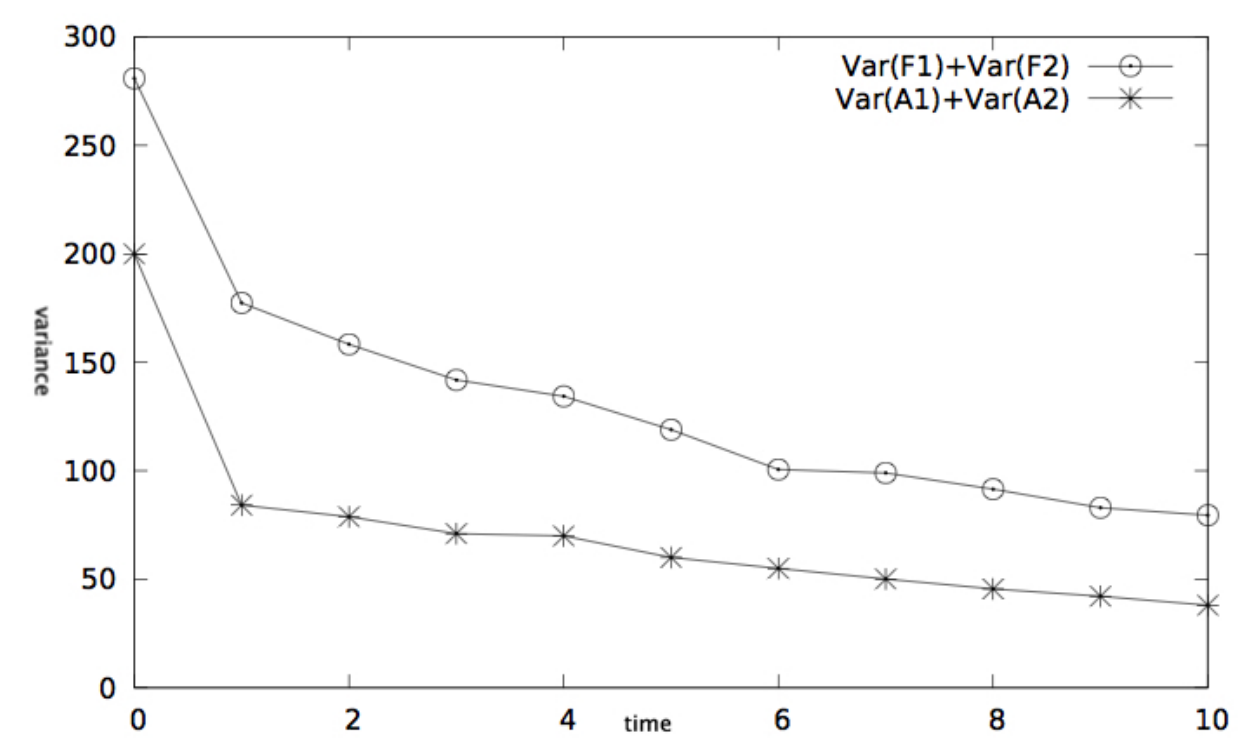}
\caption{Variances as a function of time.}\label{var}
\end{figure}

  \subsubsection{Free energy error}\label{error}

We now present, the variation, as a function of time, of the normalized $L^2-$ distance between the real free energy and the estimated one, in both cases: ABF and PABF methods. As can be seen in Figure~\ref{err}, in both methods, the error decreases as time increases. Moreover, this error is always smaller for the projected ABF method than for the ABF method.
\begin{figure}[htp]
  \centering
    \includegraphics[width=100mm]{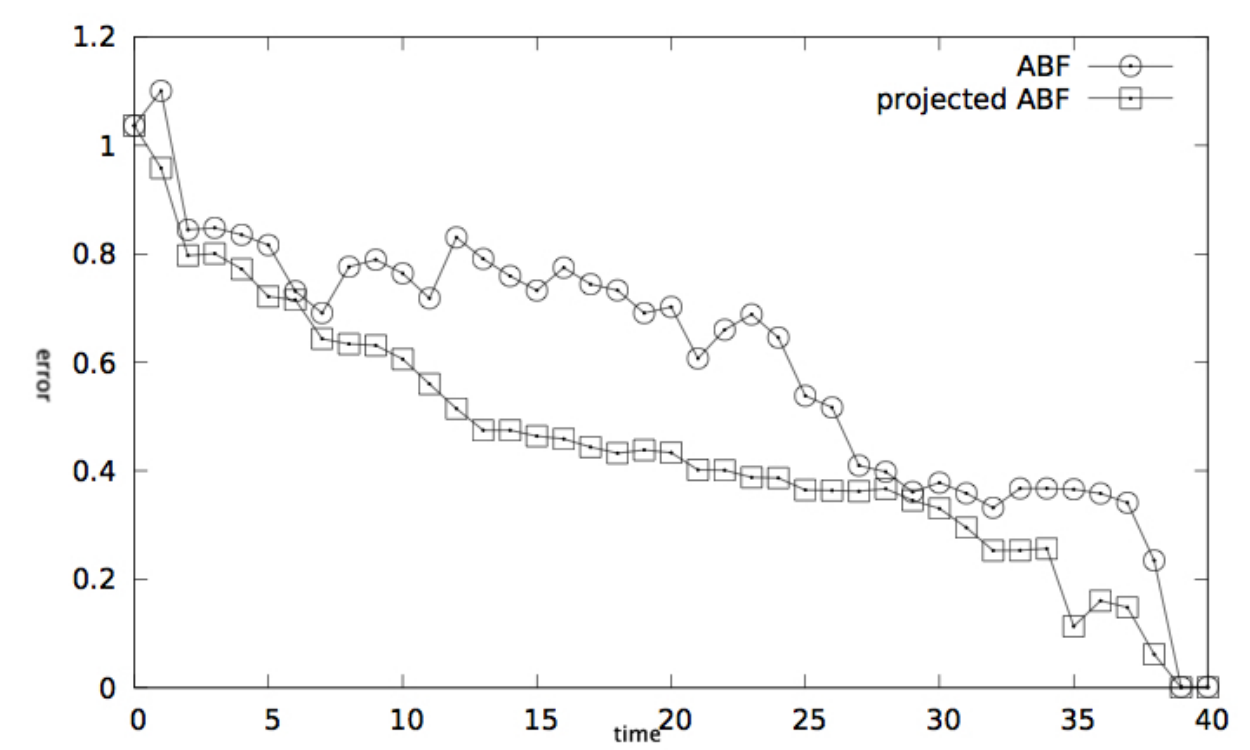}
\caption{ Free energy error as a function of time.}\label{err}
\end{figure}
  \subsubsection{Distribution}\label{distrint}
  
Another way to illustrate that the projected ABF method converges faster than the standard ABF method is to plot the density function $\psi^{\xi}$ as a function of time (see Figure~\ref{dtr0}-\ref{dtr25}). It is illustrated that, as time increases, the probability of visiting all bins (of the reaction coordinate space $\mathcal{M}$) increases. 

It is observed that, for the projected ABF method, the state where both bonds are stretched is visited earlier (at time $5$) than for the standard ABF method (at time $20$). The convergence to uniform law along $(\xi_1,\xi_2)$ is faster with the projected ABF method.

\begin{figure}[htp]
  \centering
  \begin{tabular}{cc}
    \includegraphics[width=40mm, angle=270 ]{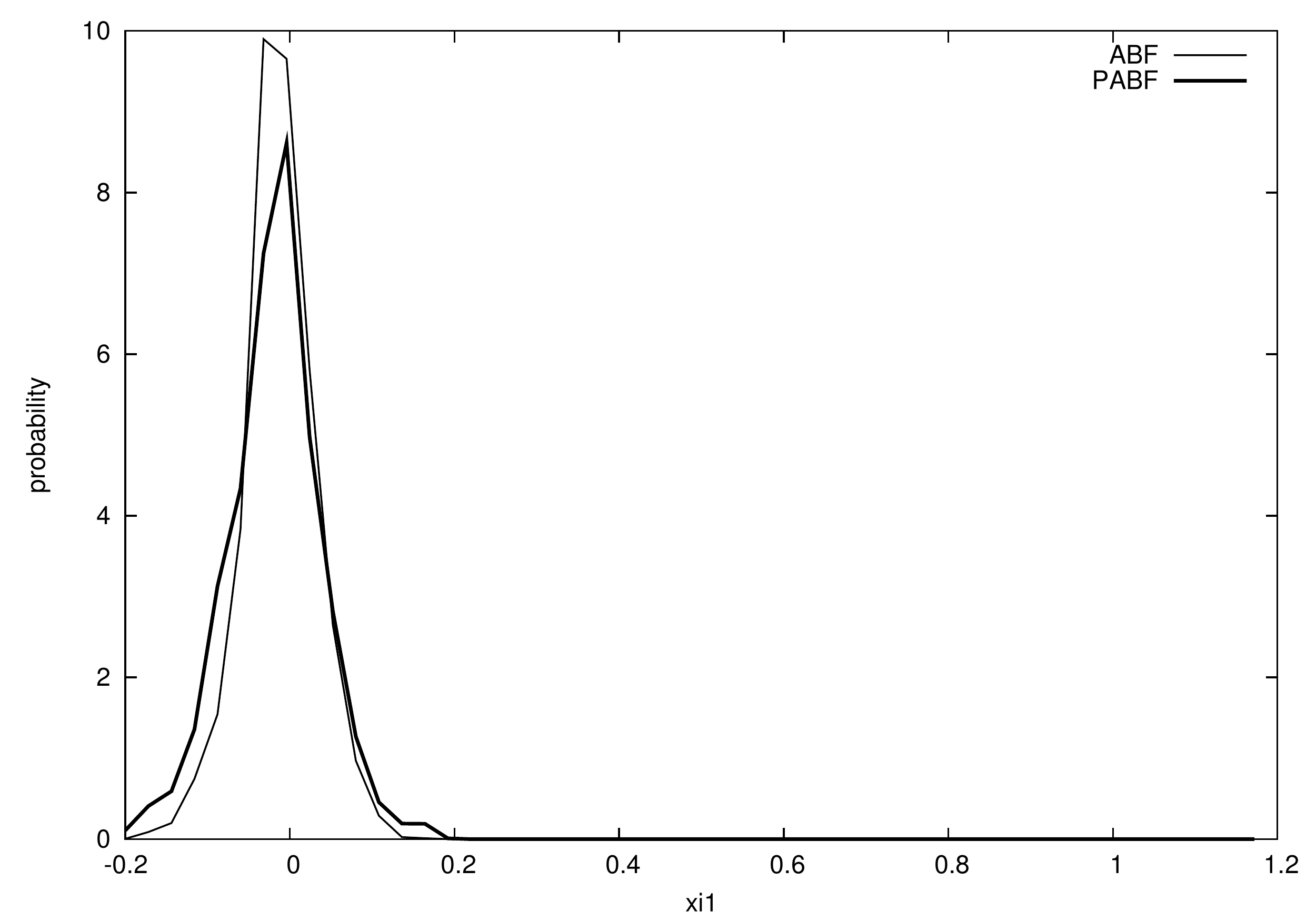}&
    \includegraphics[width=40mm, angle=270 ]{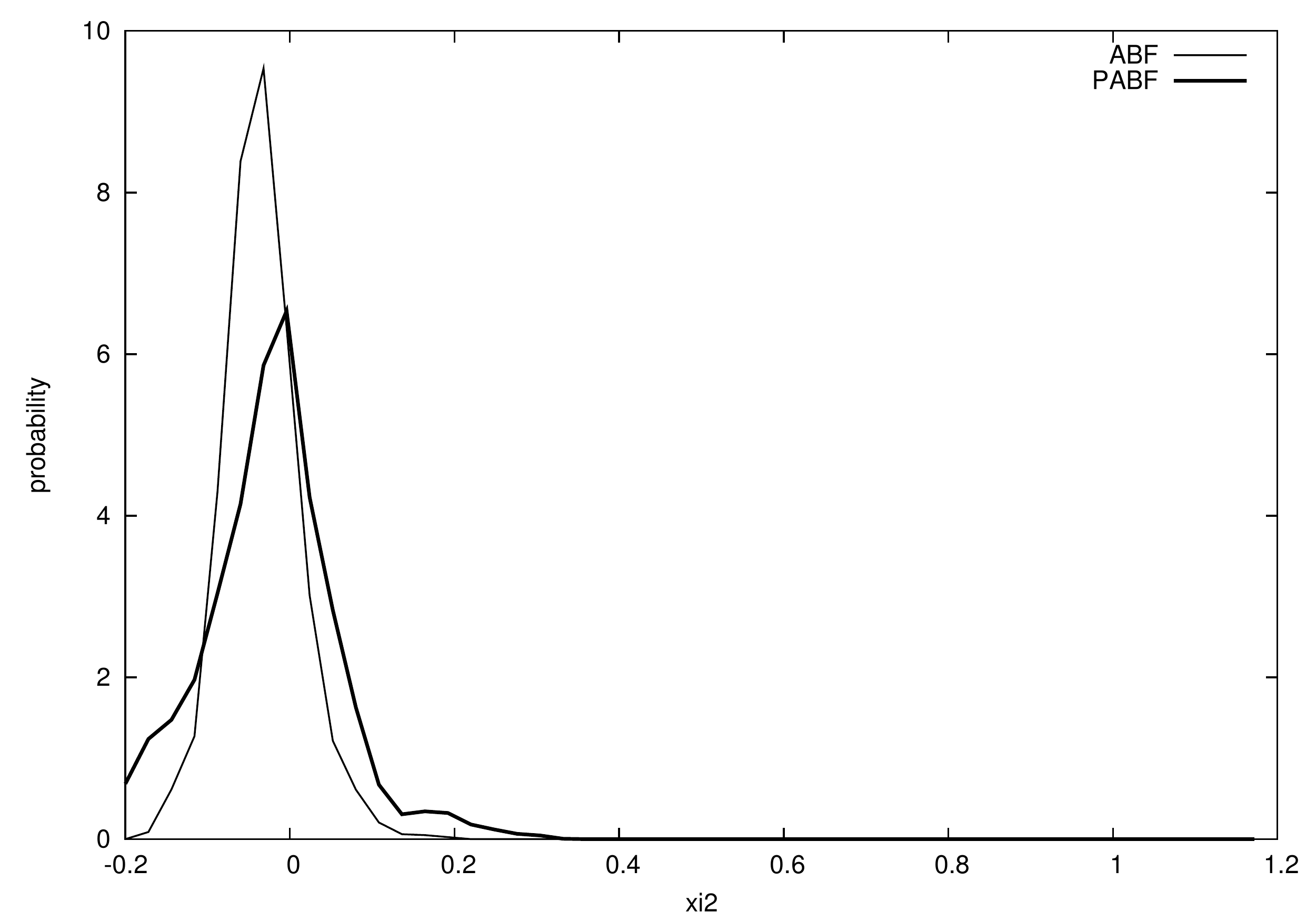}\\
  \end{tabular}
\caption{At time $0.025$. Left: $\int \psi^{\xi}(x_1,x_2)dx_2$; Right:  $\int \psi^{\xi}(x_1,x_2)dx_1$.}\label{dtr0}
\end{figure}
\begin{figure}[htp]
  \centering
  \begin{tabular}{cc}
    \includegraphics[width=40mm, angle=270]{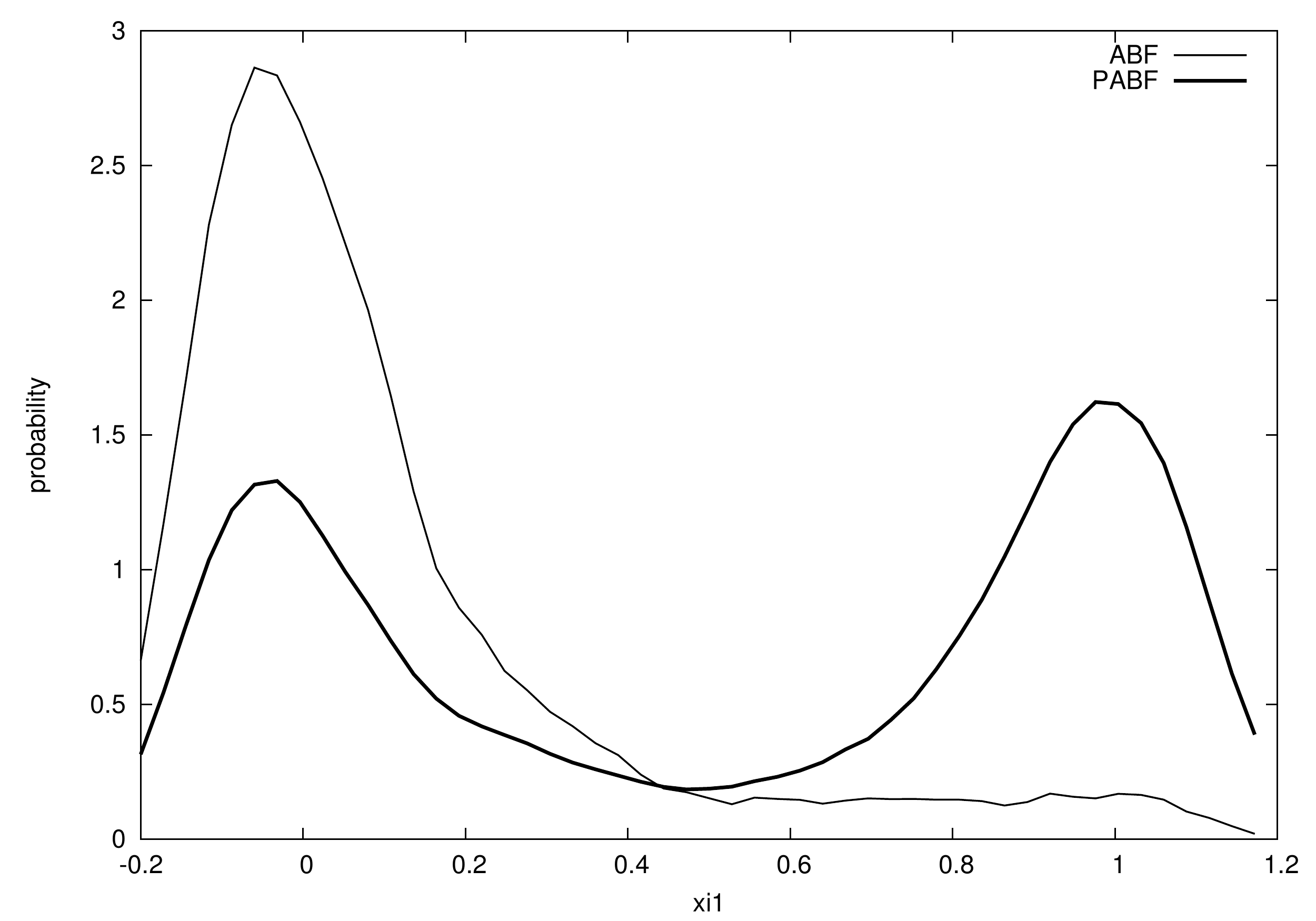}&
    \includegraphics[width=40mm, angle=270]{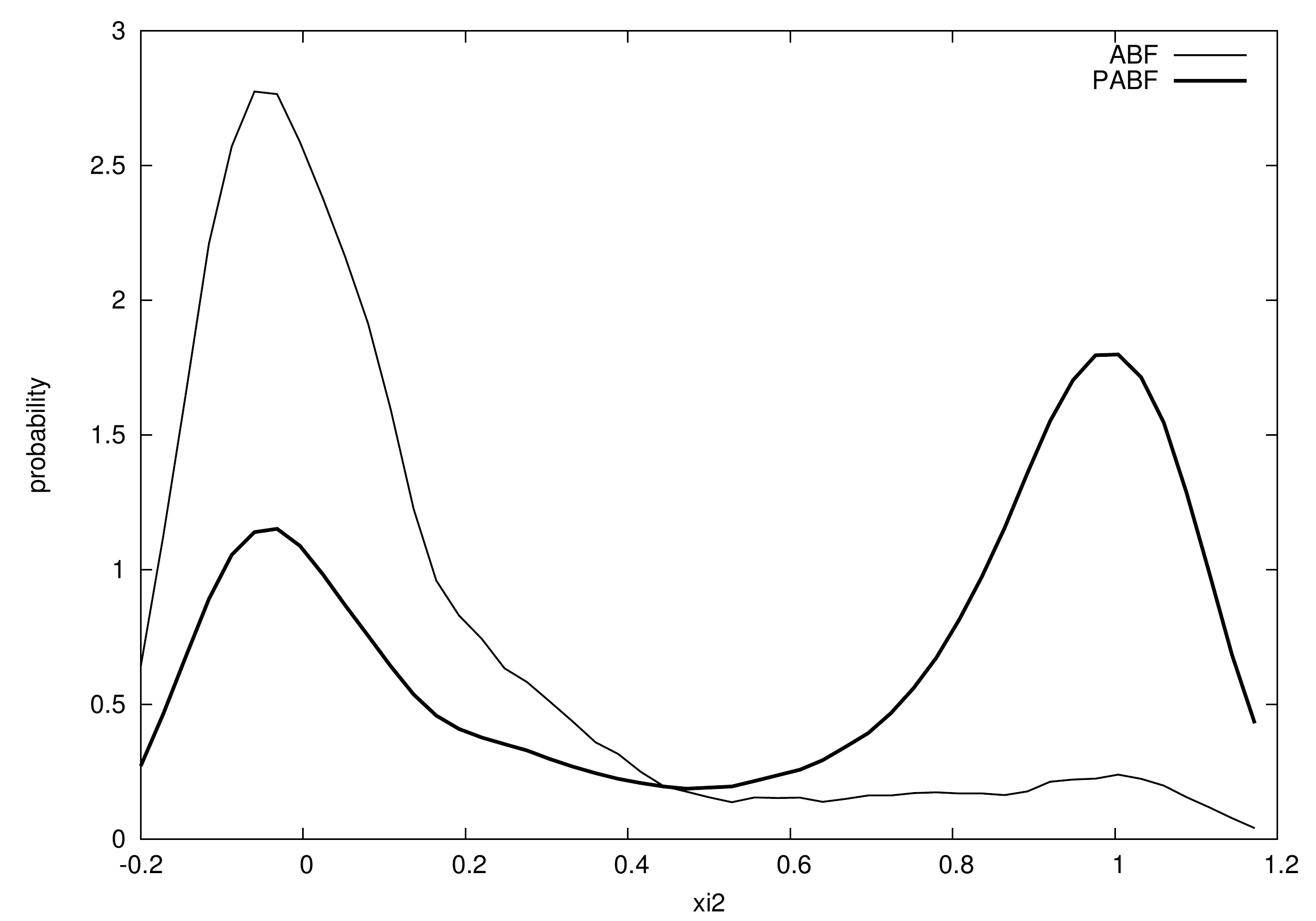}\\
  \end{tabular}
\caption{At time $5$. Left: $\int \psi^{\xi}(x_1,x_2)dx_2$; Right:  $\int \psi^{\xi}(x_1,x_2)dx_1$.}\label{dtr5}
\end{figure}
\begin{figure}[htp]
  \centering
  \begin{tabular}{cc}
    \includegraphics[width=40mm, angle=270]{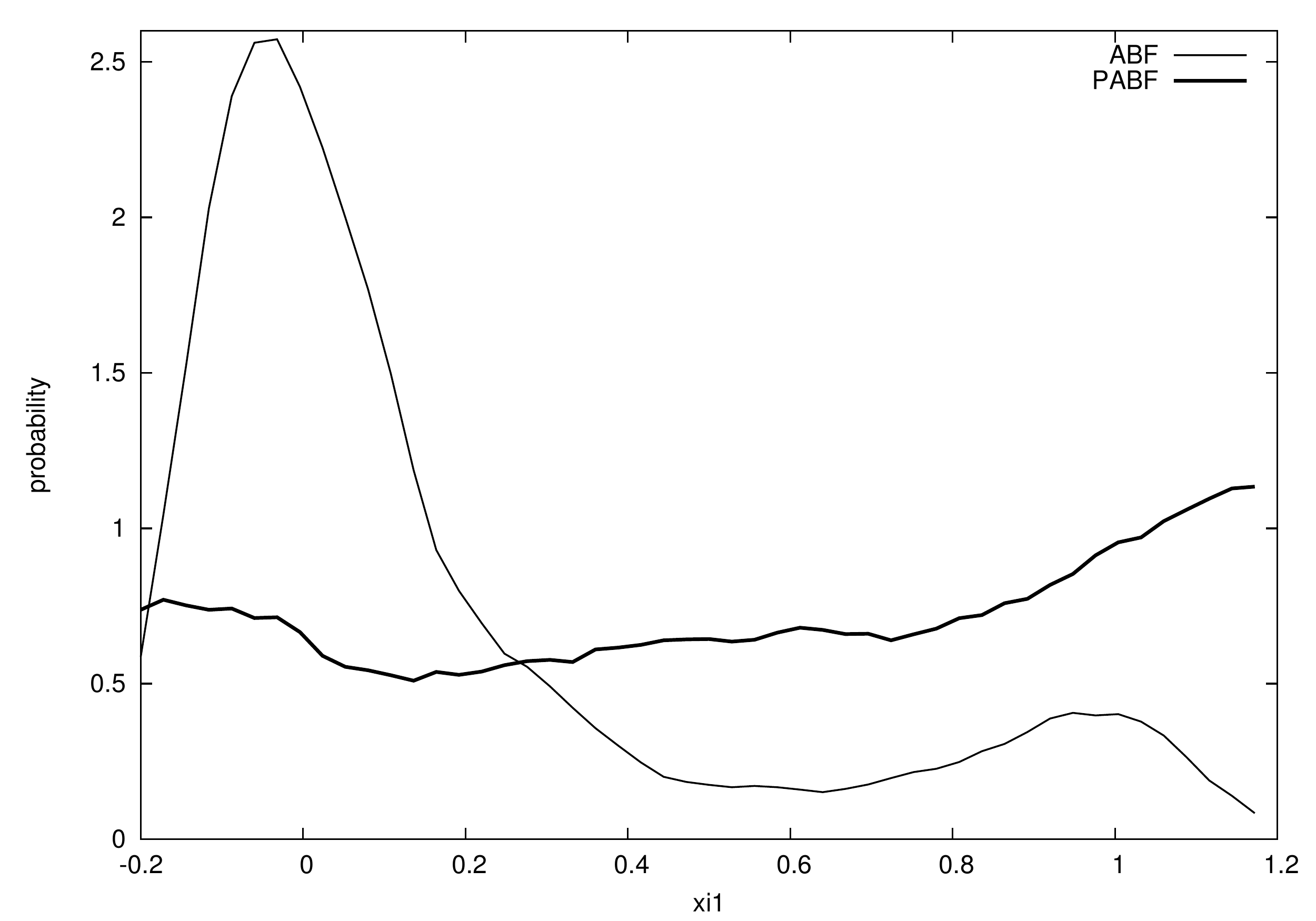}&
    \includegraphics[width=40mm, angle=270]{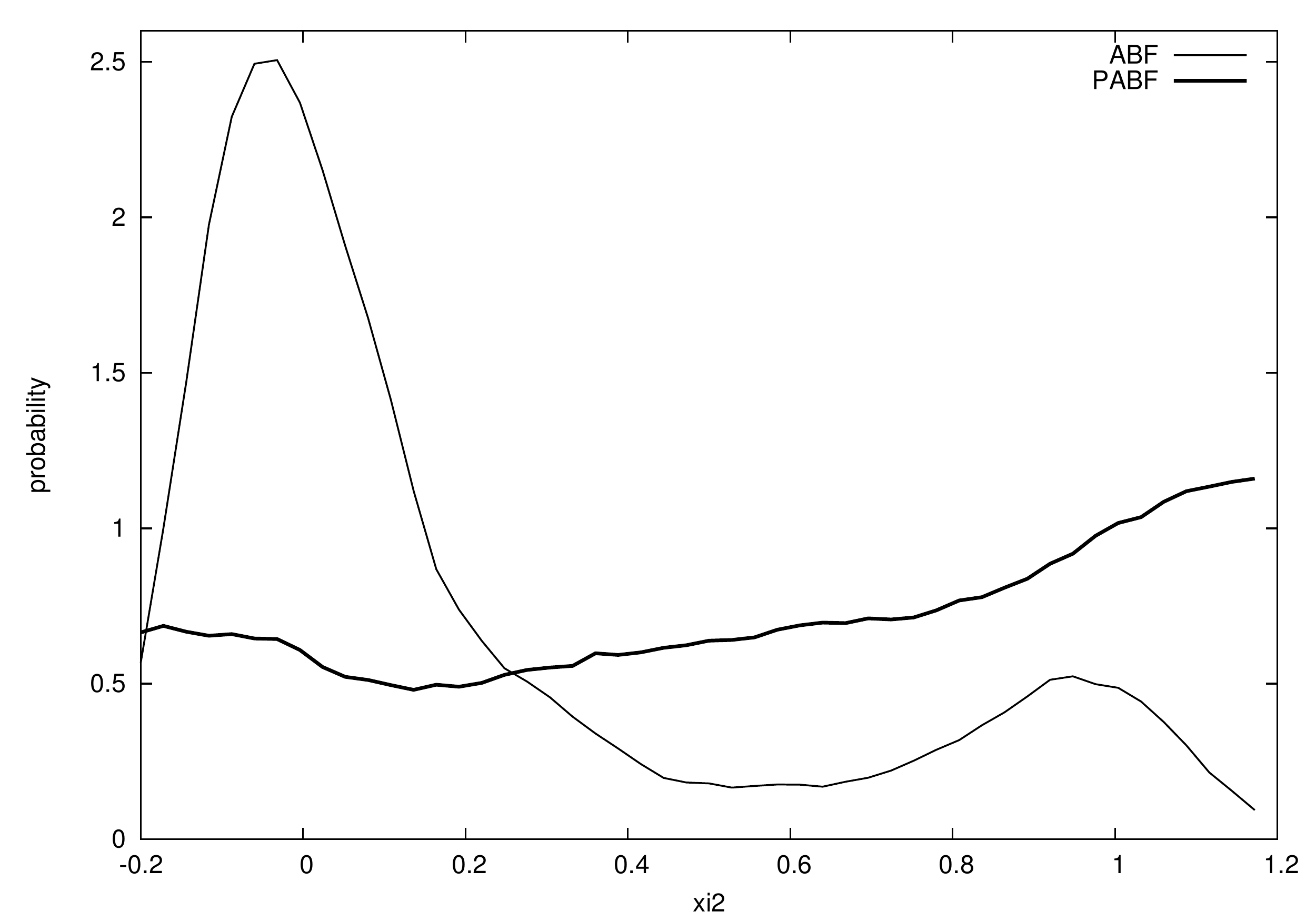}\\
  \end{tabular}
\caption{At time $10$. Left: $\int \psi^{\xi}(x_1,x_2)dx_2$; Right:  $\int \psi^{\xi}(x_1,x_2)dx_1$.}\label{dtr10}
\end{figure}
\begin{figure}[htp]
  \centering
  \begin{tabular}{cc}
    \includegraphics[width=40mm, angle=270]{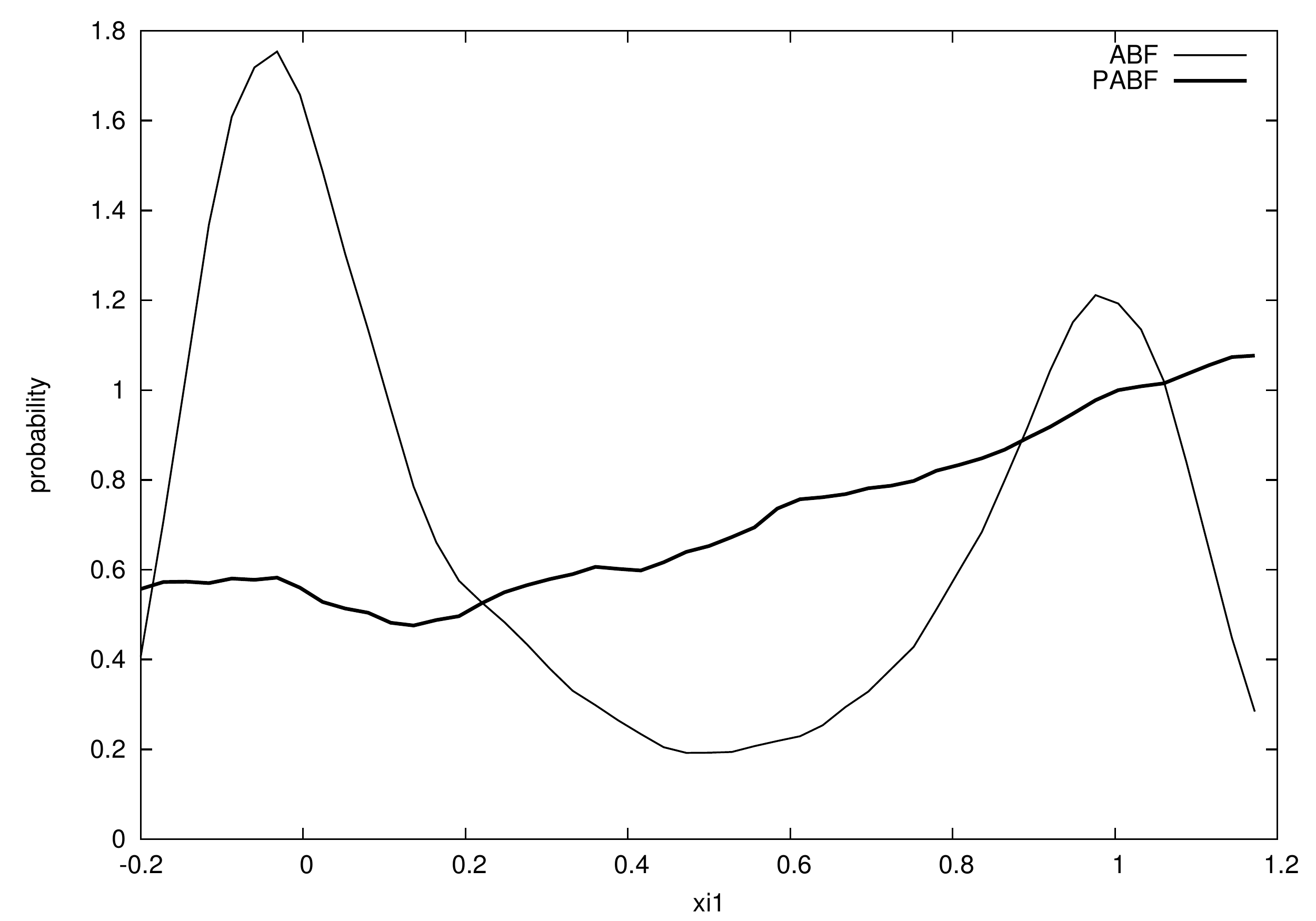}&
    \includegraphics[width=40mm, angle=270]{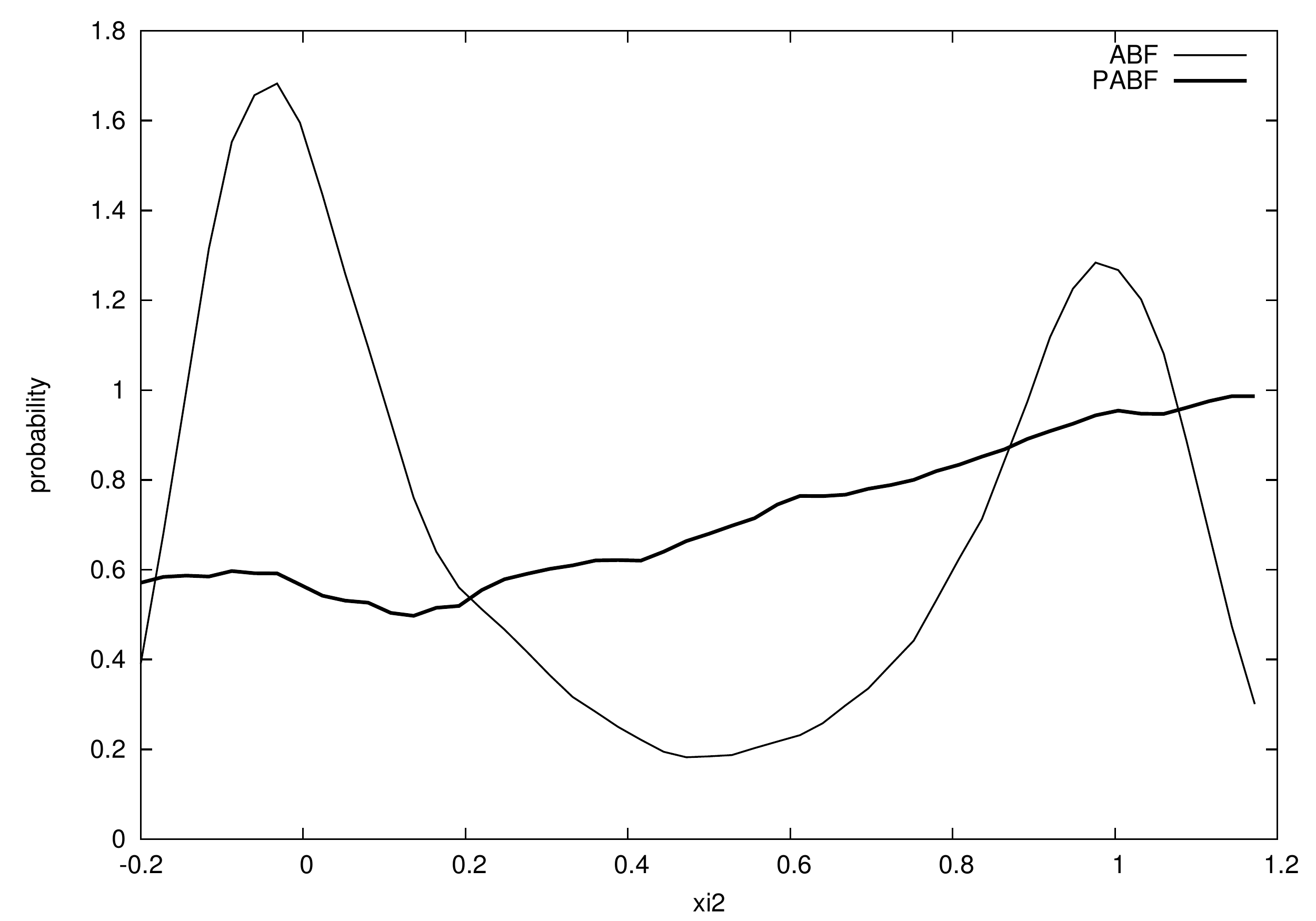}\\
  \end{tabular}
\caption{At time $20$. Left: $\int \psi^{\xi}(x_1,x_2)dx_2$; Right:  $\int \psi^{\xi}(x_1,x_2)dx_1$.}\label{dtr20}
\end{figure}
\newpage
\begin{figure}[htp]
  \centering
  \begin{tabular}{cc}
    \includegraphics[width=40mm, angle=270]{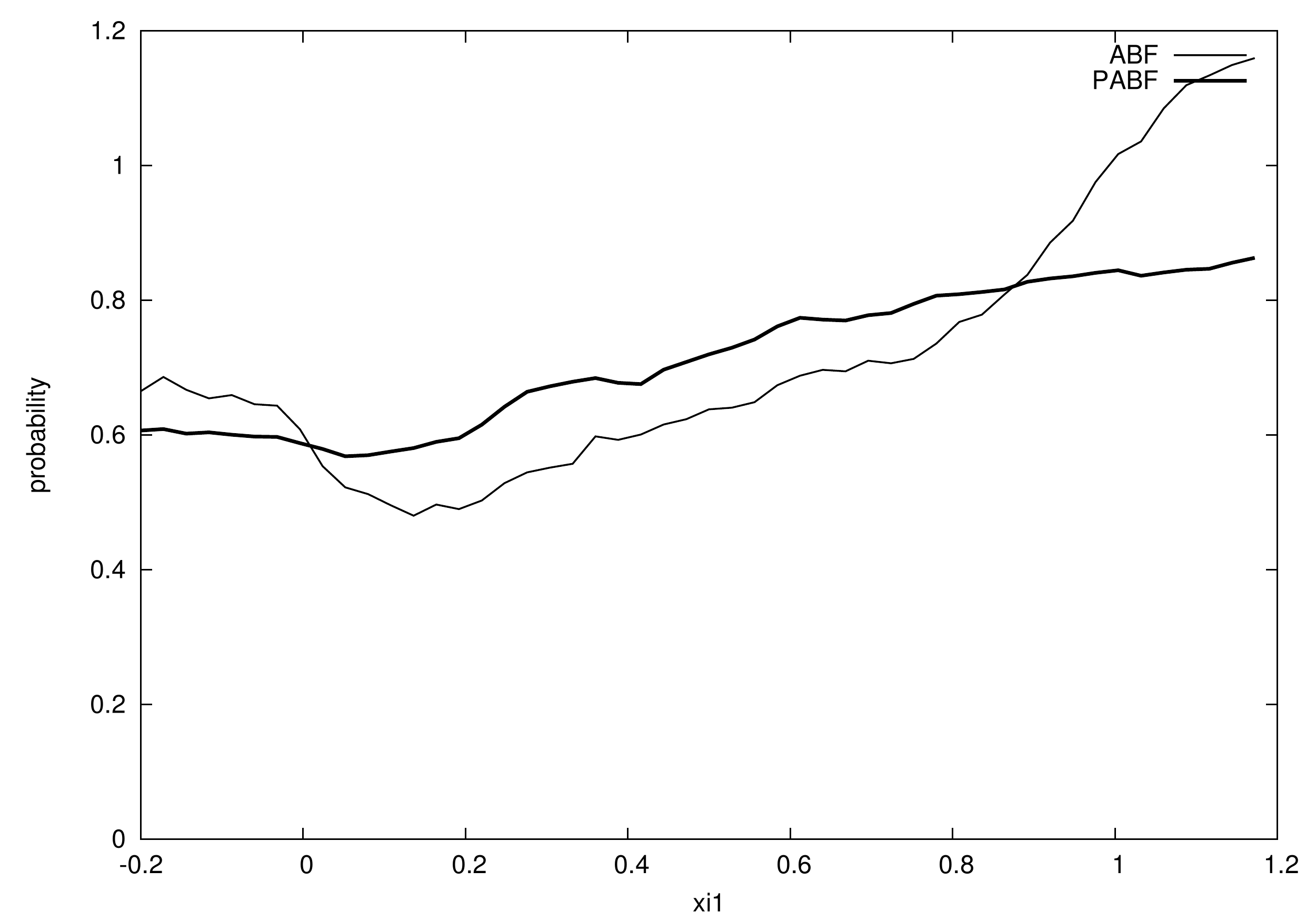}&
    \includegraphics[width=40mm, angle=270]{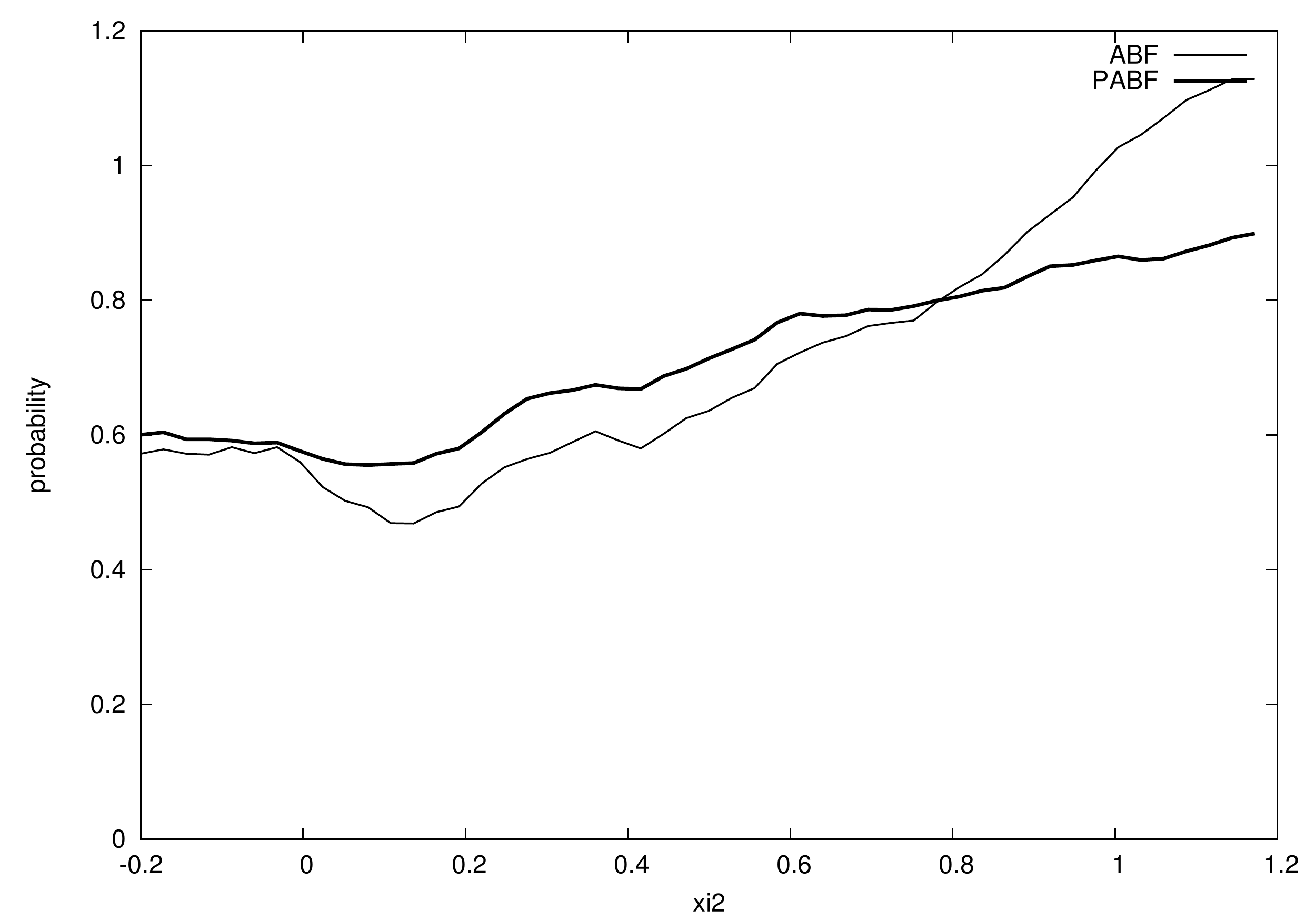}\\
  \end{tabular}
\caption{At time $25$. Left: $\int \psi^{\xi}(x_1,x_2)dx_2$; Right:  $\int \psi^{\xi}(x_1,x_2)dx_1$.}\label{dtr25}
\end{figure}

\section{Proofs}\label{pro}
The proofs are inspired from [\refcite{tony:08}]. One may assume that $\beta=1$ up to the following change of variable: $\tilde{t}=\beta^{-1}t$, $\tilde{\psi}(\tilde{t},x)=\psi(t,x),$ $\tilde{V}(x)=\beta V(x)$. Recall, that we work in $\mathcal{D}=\mathbb{T}^n$, $\mathcal{M}=\mathbb{T}^2$ and $\forall x=(x_1,...,x_n)\in~\mathbb{T}^n$, $\xi(x)=(x_1,x_2)$.

\subsection{\textit{Proof of Proposition \ref{diff}}}\label{difprf}
Let $g:\mathbb{T}^2\rightarrow\mathbb{R}$ be a function in $H^1(\mathbb{T}^2).$
\begin{align*}
\frac{d}{dt}\displaystyle\int_{\mathbb{T}^{2}}\psi^{\xi}gdx_1dx_2&=\frac{d}{dt}\displaystyle\int_{\mathbb{T}^n}\psi g\circ\xi dx_1^n\\
&=\displaystyle\int_{\mathbb{T}^n}{\rm div}[(\nabla V-\sum_{i=1}^2\partial_iA_t\circ\xi\nabla\xi_i)\psi+\nabla\psi]g\circ\xi dx_1^n\\
&=-\displaystyle\int_{\mathbb{T}^n}\sum_{j=1}^2[(\nabla V-\sum_{i=1}^2\partial_iA_t\circ\xi\nabla\xi_i)\psi+\nabla\psi].\nabla\xi_j\partial_jg\circ\xi dx_1^n\\
&=-\sum_{i=1}^2\int_{\mathbb{T}^n}[(\nabla V.\nabla\xi_i\psi+\nabla\psi.\nabla\xi_i]\partial_ig\circ\xi dx_1^n\\
&\quad+\sum_{i=1}^2\int_{\mathbb{T}^n}\partial_iA_t\circ\xi\psi\partial_ig\circ\xi dx_1^n.
\end{align*}
Applying Fubini, it holds:
\begin{align*}
\frac{d}{dt}\displaystyle\int_{\mathbb{T}^{2}}\psi^{\xi}gdx_1dx_2&=-\displaystyle\sum_{i=1}^2\int_{\mathbb{T}^{2}}\int_{\Sigma_{(x_1,x_2)}}[\partial_i V\psi+\partial_i\psi]dx_3^n\partial_ig(x_1,x_2) dx_1dx_2\\
&\quad +\displaystyle\sum_{i=1}^2\int_{\mathbb{T}^{2}}\int_{\Sigma_{(x_1,x_2)}}\partial_iA_t(x_1,x_2)\psi dx_3^n\partial_ig(x_1,x_2) dx_1dx_2\\
&=-\displaystyle\sum_{i=1}^2\int_{\mathbb{T}^{2}} F_t^i\psi^{\xi}\partial_ig(x_1,x_2) dx_1dx_2-\displaystyle\sum_{i=1}^2\int_{\mathbb{T}^{2}}\partial_i\psi^{\xi}\partial_ig(x_1,x_2) dx_1dx_2\\
&\quad +\displaystyle\sum_{i=1}^2\int_{\mathbb{T}^{2}}\partial_iA_t(x_1,x_2)\psi^{\xi}\partial_ig(x_1,x_2) dx_1dx_2\\
&=\displaystyle\int_{\mathbb{T}^{2}}\Delta\psi^{\xi}g(x_1,x_2) dx_1dx_2,
\end{align*}
where we used \eqref{sca} with $\varphi=\psi^{\xi}(t,.)$. This is the weak formulation of:
$$\partial_t\psi^{\xi}=\Delta\psi^{\xi},\, \text{on}\,[0,\infty[\times \mathbb{T}^2.$$

\begin{remark}\label{simpdi}
The reason why we consider the weighted Helmholtz decomposition~\eqref{hel} with $\varphi=\psi^{\xi}(t,.)$ in the PABF dynamics \eqref{pabf} instead of the standard one~\eqref{helm0} is precisely to obtain this simple diffusion equation on the function $\psi^{\xi}$. This is will also be useful in the proof of Lemma~\ref{neg} below.
\end{remark}

\subsection{\textit{Proof of Corollary~\ref{fishco}}}
Let $\phi=\psi^{\xi}$ and $\phi_{\infty}=\psi^{\xi}_{\infty}=1$. It is known that $\forall t\geq0$ and $\forall (x_1,x_2)\in \mathbb{T}^2$, $\phi$ satisfies: 
\begin{equation}\label{diff2}
\partial_t\phi=\Delta_{x_1^2}\phi.
\end{equation}
Moreover (See Remark~\ref{rmdif}), it is assumed that
 and is such that $$\displaystyle\int_{\mathbb{T}^2}\phi(0,.)=1 \,\mbox{and}\,\phi(0,.)\geq0.$$
Let us show that $\forall\,t\geq 0,\forall k>0,\,\|\phi(t,.)-1\|_{H^k({\mathbb{T}^2})}\leq \|\phi(0,.)-1\|_{H^k({\mathbb{T}^2})}e^{-8\pi^2 t}.$

First, we prove that $\phi$ converges to $1$ in $L^2({\mathbb{T}^2})$,
\begin{align*}
\frac{1}{2}\frac{d}{dt}\displaystyle\int_{\mathbb{T}^2} | \phi-1|^2&=\displaystyle\int_{\mathbb{T}^2} \partial_t\phi(\phi-1)\\
&=\displaystyle\int_{\mathbb{T}^2}\Delta\phi(\phi-1)\\
&=-\displaystyle\int_{\mathbb{T}^2}\nabla\phi\nabla(\phi-1)\\
&=-\displaystyle\int_{\mathbb{T}^2}|\nabla\phi|^2\\
&\leq -4\pi^2\displaystyle\int_{\mathbb{T}^2}|\phi-1|^2,
\end{align*}
where we have used the Poincar\'e-Wirtinger inequality on the torus $\mathbb{T}^2$, applied to $\phi$: for any function $f\in H^1(\mathbb{T}^2)$,
$$\displaystyle \int_{\mathbb{T}^2}\left(f-\int_{\mathbb{T}^2}f\right)^2\leq \frac{1}{4\pi^2}\int_{\mathbb{T}^2}|\nabla f|^2.$$
We therefore obtain, $\|\phi(t,.)-1\|^2_{L^2(\mathbb{T}^2)}\leq\|\phi(0,.)-1\|^2_{L^2(\mathbb{T}^2)}e^{-8\pi^2t}$.

 Second, we prove that $\partial_i\phi$ converges to $0$ in $L^2({\mathbb{T}^2})$. For $i=1,2$, $\partial_i\phi$ satisfies~\eqref{diff2}: $\partial_t(\partial_i\phi)=\Delta_{x_1^2}(\partial_i\phi)$, with periodic boundary conditions. As above,
\begin{align*}
\frac{1}{2}\frac{d}{dt}\displaystyle\int_{\mathbb{T}^2} | \partial_i\phi|^2&=\displaystyle\int_{\mathbb{T}^2} \partial_t(\partial_i\phi)\partial_i\phi\\
&=\displaystyle\int_{\mathbb{T}^2}\Delta(\partial_i\phi)\partial_i\phi\\
&=-\displaystyle\int_{\mathbb{T}^2}|\nabla(\partial_i\phi)|^2.
\end{align*}
Using again Poincar\'e-Wirtinger inequality on $\partial_i\phi$,
\begin{align*}
\frac{1}{2}\frac{d}{dt}\displaystyle\int_{\mathbb{T}^2} | \partial_i\phi|^2&\leq-4\pi^2\displaystyle\int_{\mathbb{T}^2}\left(\partial_i\phi-\displaystyle\int_{\mathbb{T}^2}\partial_i\phi\right)^2\\
&=-4\pi^2\displaystyle\int_{\mathbb{T}^2}|\partial_i\phi|^2.
\end{align*}
Where we used $\int_{\mathbb{T}^2}\partial_i\phi=0$, since $\phi$ is periodic on $\mathbb{T}^2$. Therefore, it holds $\|\partial_i\phi(t,.)\|^2_{L^2(\mathbb{T}^2)}\leq\|\partial_i\phi(0,.)\|^2_{L^2(\mathbb{T}^2)}e^{-8\pi^2t}$.

Third, one can prove by induction that all higher derivatives of $\phi$ converge exponentially fast to $0$, with rate $8\pi^2$ and the following estimation is then proven: 
$$\forall\,t\geq 0,\forall k>0,\quad\|\phi(t,.)-1\|^2_{H^k({\mathbb{T}^2})}\leq \|\phi(0,.)-1\|_{H^k({\mathbb{T}^2})}^2e^{-8\pi^2 t}.$$
As $\displaystyle H^k(\mathbb{T}^2)\hookrightarrow L^{\infty}(\mathbb{T}^2),\,\forall k>1$, then $\exists c>0$,
$$\|\phi-1\|^2_{ L^{\infty}}\leq c\|\phi-1\|^2_{H^k}\leq ce^{-8\pi^2 t}.$$
Therefore, $\forall \varepsilon>0$, $\exists t_0>0$, $\forall x\in \mathbb{T}^2$, $\forall t>t_0$, $\phi(t,x)\geq 1-\varepsilon$. Finally, $\forall t>t_0$\\
$I(\psi^{\xi}|\psi^{\xi}_{\infty})=\displaystyle\int_{\mathbb{T}^2}\frac{|\nabla_{x_1^2}\phi|^2}{\phi}\leq\frac{1}{1-\varepsilon}\displaystyle\int_{\mathbb{T}^2}|\nabla_{x_1^2}\phi|^2\leq \frac{\|\nabla_{x_1^2}\phi(0,.)\|^2_{L^2(\mathbb{T}^2)}}{1-\varepsilon}e^{-8\pi^2t}.$

\subsection{\textit{Proof of Corollary \ref{mic}}}

We have that $\psi^{\xi}_{\infty}=1$ satisfies $LSI(r)$, for some $r>0$ (see Chapter~3, Section~3 in [\refcite{ane:00}]). Referring to Proposition~\ref{diff} and since $\psi^{\xi}$ is a probability density function, one gets: 
\begin{align*}
\frac{d}{dt}E_M&=\displaystyle\int_{\mathbb{T}^2}\partial_t\left(\psi^{\xi}\ln(\psi^{\xi})\right)\\
&=\displaystyle\int_{\mathbb{T}^2}\partial_t\psi^{\xi}\ln(\psi^{\xi})+\int_{\mathbb{T}^2}\partial_t\psi^{\xi}\\
&=\int_{\mathbb{T}^2}\Delta\psi^{\xi}\ln(\psi^{\xi})\\
&=-\int_{\mathbb{T}^2}|\nabla_{x_1^2}\ln(\psi^{\xi})|^2\psi^{\xi}\\
&=-I(\psi^{\xi}|\psi^{\xi}_{\infty})\\
&\leq-2r H(\psi^{\xi}|\psi^{\xi}_{\infty})\\
&=-2r E_M.
\end{align*}
Therefore, $E_M$ converges exponentially fast to zero. Referring to Corollary~\ref{fishco} and since $E_M$ converges to zero, we have that for any $t>t_0$,
\begin{align*}
\displaystyle -E_M(t)=\int_t^{\infty}\frac{d}{ds}E_M(s)ds&=-\int_t^{\infty}I(\psi^{\xi}|\psi^{\xi}_{\infty})ds\\
&\geq -I_0\int_t^{\infty}\mathrm{e}^{-8\pi^2s}ds\\
&=-\frac{I_0}{8\pi^2}\mathrm{e}^{-8\pi^2t},
\end{align*}
which yields the desired estimation.

\subsection{\textit{Proof of Theorem \ref{theo1}}}
To prove our main result, several intermediate lemmas are needed.
\begin{lemma}\label{dif}
$\forall t\geq 0$, $\forall (x_1,x_2)\in \mathbb{T}^2$ and for $i=1,2$, we have:
$$(F_t^i-\partial_iA)(x_1,x_2)=\displaystyle\left(\int_{\Sigma_{(x_1,x_2)}}\partial_i\ln(\psi/\psi_{\infty})\frac{\psi}{\psi^{\xi}}dx_3^n\right)(x_1,x_2)-\left(\partial_i\ln(\psi^{\xi}/\psi^{\xi}_{\infty})\right)(x_1,x_2).$$
\end{lemma}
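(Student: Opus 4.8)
The plan is to compute both $F_t^i$ and $\partial_i A$ explicitly in terms of the density $\psi$ and its marginal $\psi^\xi$, and then subtract. Recall $\psi_\infty = e^{-\beta(V - A\circ\xi)}$ and $\psi_\infty^\xi = 1$, so that $\ln(\psi/\psi_\infty) = \ln\psi + \beta V - \beta A\circ\xi$ and hence, differentiating with respect to $x_i$ for $i=1,2$,
\begin{equation*}
\partial_i\ln(\psi/\psi_\infty) = \frac{\partial_i\psi}{\psi} + \beta\,\partial_i V - \beta\,(\partial_i A)\circ\xi .
\end{equation*}
Since $\xi(x)=(x_1,x_2)$, the term $(\partial_i A)\circ\xi$ depends only on $(x_1,x_2)$, so it is constant on the fibre $\Sigma_{(x_1,x_2)}$. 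Integrating the above against the conditional probability measure $d\mu_{t,x_1,x_2} = \psi\,dx_3^n/\psi^\xi$ (which has total mass $1$ on $\Sigma_{(x_1,x_2)}$, see Remark~\ref{condme}) therefore gives
\begin{equation*}
\int_{\Sigma_{(x_1,x_2)}}\partial_i\ln(\psi/\psi_\infty)\,\frac{\psi}{\psi^\xi}\,dx_3^n = \frac{1}{\psi^\xi}\int_{\Sigma_{(x_1,x_2)}}\frac{\partial_i\psi}{\psi}\,\psi\,dx_3^n + \beta\,F_t^i - \beta\,\partial_i A ,
\end{equation*}
using the defining formula $F_t^i(x_1,x_2) = \psi^{\xi\,-1}\int_{\Sigma_{(x_1,x_2)}}\partial_i V\,\psi\,dx_3^n$ from \eqref{Fok} and the conditional-expectation formula \eqref{meanf}--\eqref{mean} for $\partial_i A$.

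Next I would simplify the remaining integral. Because $\partial_i$ (for $i=1,2$) commutes with the fibre integral $\int_{\Sigma_{(x_1,x_2)}}\cdot\,dx_3^n = \int_{\mathbb{T}^{n-2}}\cdot\,dx_3^n$ — the variables of integration $x_3,\dots,x_n$ being independent of $x_1,x_2$ on the torus — one has
\begin{equation*}
\int_{\Sigma_{(x_1,x_2)}}\frac{\partial_i\psi}{\psi}\,\psi\,dx_3^n = \int_{\Sigma_{(x_1,x_2)}}\partial_i\psi\,dx_3^n = \partial_i\!\left(\int_{\Sigma_{(x_1,x_2)}}\psi\,dx_3^n\right) = \partial_i\psi^\xi .
\end{equation*}
Dividing by $\psi^\xi$ turns this into $\partial_i\psi^\xi/\psi^\xi = \partial_i\ln\psi^\xi = \partial_i\ln(\psi^\xi/\psi_\infty^\xi)$, since $\psi_\infty^\xi = 1$. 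Substituting back and rearranging yields
\begin{equation*}
\beta\,(F_t^i - \partial_i A) = \int_{\Sigma_{(x_1,x_2)}}\partial_i\ln(\psi/\psi_\infty)\,\frac{\psi}{\psi^\xi}\,dx_3^n - \partial_i\ln(\psi^\xi/\psi_\infty^\xi),
\end{equation*}
which is the claimed identity once one recalls the normalization $\beta = 1$ adopted at the start of Section~\ref{pro} (or, keeping $\beta$, noting that $f_i = \partial_i V$ and the factor $\beta$ cancels consistently on both sides through the definition of $\psi_\infty$).

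There is no serious obstacle here; the identity is essentially a bookkeeping computation. The one point requiring a little care is the interchange of $\partial_i$ with the partial integration over $x_3^n$, which is legitimate under the standing regularity assumption on $\psi$ (smoothness in all variables, as emphasized in the remark following Theorem~\ref{theo1}) together with compactness of $\mathbb{T}^{n-2}$; and one must be consistent about whether the $\beta$'s are present or have been scaled away. With the convention $\beta=1$ fixed in Section~\ref{pro}, the statement follows directly.
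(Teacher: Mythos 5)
Your proof is correct and follows essentially the same route as the paper: expand $\partial_i\ln(\psi/\psi_\infty)$, integrate against the conditional measure, identify $\int_{\Sigma_{(x_1,x_2)}}\partial_i\psi\,dx_3^n=\partial_i\psi^\xi$, and observe the cancellation with $\partial_i\ln(\psi^\xi/\psi^\xi_\infty)=\partial_i\ln\psi^\xi$. The only loose spot is the parenthetical claim that the $\beta$'s would ``cancel consistently'' if kept; in fact one would obtain $\beta\,(F_t^i-\partial_iA)$ on the left, so the identity as stated genuinely relies on the normalization $\beta=1$ fixed at the start of Section~\ref{pro}, which your main argument correctly invokes.
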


\begin{proof}
\begin{align*}
&\displaystyle\int_{\Sigma_{(x_1,x_2)}}\partial_i\ln(\psi/\psi_{\infty})\frac{\psi}{\psi^{\xi}}dx_3^n-\partial_i\ln(\psi^{\xi}/\psi^{\xi}_{\infty})\\
&=\displaystyle\int_{\Sigma_{(x_1,x_2)}}\partial_i\ln(\psi)\frac{\psi}{\psi^{\xi}}dx_3^n-\int_{\Sigma_{(x_1,x_2)}}\partial_i\ln(\psi_{\infty})\frac{\psi}{\psi^{\xi}}dx_3^n-\partial_i\ln(\psi^{\xi})+\partial_i\ln(\psi^{\xi}_{\infty})\\
&=\displaystyle\frac{1}{\psi^{\xi}}\int_{\Sigma_{(x_1,x_2)}}\partial_i\psi dx_3^n+\int_{\Sigma_{(x_1,x_2)}}(\partial_i V-\nabla\xi_i\partial_iA\circ\xi)\frac{\psi}{\psi^{\xi}}dx_3^n-\partial_i\ln(\psi^{\xi})\\
&=\displaystyle\frac{\partial_i\psi^{\xi}}{\psi^{\xi}}+F_t^i-\partial_iA-\frac{\partial_i\psi^{\xi}}{\psi^{\xi}}\\
&=\displaystyle F_t^i-\partial_iA.
\end{align*}
\end{proof}

\begin{lemma}\label{difabs}
Suppose that [H1] and [H2] hold, then for all $t\geq0$, for all $(x_1,x_2)\in {\mathbb{T}^2}$ and for $i=1,2$, we have:
$$\displaystyle |F_t^i(x_1,x_2)-\partial_iA(x_1,x_2)|\leq \gamma\sqrt{\frac{2}{\rho}e_m(t,x_1,x_2)}.$$
\end{lemma}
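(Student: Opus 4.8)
The plan is to reduce the estimate to a transport (Talagrand-type) inequality for the conditional measures. First I would rewrite the left-hand side: by the mean force formula~\eqref{meanf} one has $\partial_iA(x_1,x_2)=\int_{\Sigma_{(x_1,x_2)}}\partial_iV\,d\mu_{\infty,x_1,x_2}$, while by definition $F_t^i(x_1,x_2)=\int_{\Sigma_{(x_1,x_2)}}\partial_iV\,d\mu_{t,x_1,x_2}$, so that
$$F_t^i(x_1,x_2)-\partial_iA(x_1,x_2)=\int_{\Sigma_{(x_1,x_2)}}\partial_iV\,\big(d\mu_{t,x_1,x_2}-d\mu_{\infty,x_1,x_2}\big).$$
(This is also exactly what Lemma~\ref{dif} encodes, once its two terms are combined into $\int_{\Sigma_{(x_1,x_2)}}\partial_i\ln\big(d\mu_{t,x_1,x_2}/d\mu_{\infty,x_1,x_2}\big)\,d\mu_{t,x_1,x_2}$ and one integrates by parts in the $x_i$ variable.) The point of this rewriting is that only the \emph{difference} of the two conditional measures enters, so the sole relevant feature of $\partial_iV$ is how much it varies \emph{along the fibre} $\Sigma_{(x_1,x_2)}=\{x_1,x_2\}\times\mathbb{T}^{n-2}$, and that is precisely what hypothesis [H1] controls.

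Then I would run the transport argument. By [H2] the measure $\mu_{\infty,x_1,x_2}$ satisfies $LSI(\rho)$, hence by Lemma~\ref{tala} it satisfies the Talagrand inequality $T(\rho)$, and therefore
$$W\big(\mu_{t,x_1,x_2},\mu_{\infty,x_1,x_2}\big)\le\sqrt{\frac{2}{\rho}\,H\big(\mu_{t,x_1,x_2}\,\big|\,\mu_{\infty,x_1,x_2}\big)}=\sqrt{\frac{2}{\rho}\,e_m(t,x_1,x_2)}.$$
On the other hand, by [H1] the map $x_3^n\mapsto\partial_iV(x_1,x_2,x_3^n)$ is Lipschitz on $\Sigma_{(x_1,x_2)}\simeq\mathbb{T}^{n-2}$ with Lipschitz constant controlled by $\gamma$ (its gradient in the $x_3^n$ variables has components $\partial_j\partial_iV$, $3\le j\le n$, each bounded by $\gamma$). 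Choosing an optimal coupling $\pi$ realising the above Wasserstein distance, one writes
$$F_t^i-\partial_iA=\iint\big[\partial_iV(x_1,x_2,x_3^n)-\partial_iV(x_1,x_2,\widetilde x_3^n)\big]\,d\pi\big(x_3^n,\widetilde x_3^n\big),$$
so that $\big|F_t^i-\partial_iA\big|\le\gamma\iint d_{\Sigma_{(x_1,x_2)}}(x_3^n,\widetilde x_3^n)\,d\pi\le\gamma\big(\iint d_{\Sigma_{(x_1,x_2)}}^2\,d\pi\big)^{1/2}=\gamma\,W\big(\mu_{t,x_1,x_2},\mu_{\infty,x_1,x_2}\big)$; combined with the Talagrand bound this gives the claimed inequality. (Equivalently one may avoid couplings altogether and use Kantorovich--Rubinstein duality together with $W_1\le W$.)

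The step I expect to be the genuine obstacle is the very first, conceptual one. The quantity $F_t^i-\partial_iA$ is an average of the \emph{transverse} derivative $\partial_{x_i}$ of $\ln(\psi/\psi_\infty)$ (cf.\ Lemma~\ref{dif}), whereas the logarithmic Sobolev inequality [H2] only bounds the \emph{longitudinal} Fisher information $\int_{\Sigma_{(x_1,x_2)}}\big|\nabla_{x_3^n}\ln(\psi/\psi_\infty)\big|^2\,d\mu_{t,x_1,x_2}$. A direct Cauchy--Schwarz estimate therefore does not close, and one really has to pass through the Otto--Villani implication "$LSI\Rightarrow$ Talagrand" in order to turn the entropy $e_m$ into a Wasserstein (equivalently $L^1$) control of $\mu_{t,x_1,x_2}-\mu_{\infty,x_1,x_2}$, which can then be paired with the fibrewise Lipschitz regularity of $\partial_iV$ supplied by [H1]. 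Once this is in place the remaining steps — the rewriting of $F_t^i-\partial_iA$, the passage to a coupling (or the duality argument), and the Cauchy--Schwarz inequality $\iint d\,d\pi\le(\iint d^2\,d\pi)^{1/2}$ — are all routine.
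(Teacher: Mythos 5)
Your proposal is correct and follows essentially the same route as the paper: both write $F_t^i-\partial_iA$ as $\iint(\partial_iV(x)-\partial_iV(x'))\,d\pi$ over a coupling of $\mu_{t,x_1,x_2}$ and $\mu_{\infty,x_1,x_2}$, use [H1] to bound the fibrewise Lipschitz constant of $\partial_iV$ by $\gamma$, apply Cauchy--Schwarz and optimize over couplings to get $\gamma\,W(\mu_{t,x_1,x_2},\mu_{\infty,x_1,x_2})$, and conclude via Lemma~\ref{tala} (LSI $\Rightarrow$ Talagrand). The identification of the Otto--Villani step as the key idea matches the paper exactly.
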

\begin{proof}
For any coupling measure $\pi\displaystyle\in\prod(\mu_{t,x_1,x_2},\mu_{\infty,x_1,x_2})$ defined on $\Sigma_{(x_1,x_2)}\times\Sigma_{(x_1,x_2)}$, it holds:
\begin{align*}
\displaystyle |F_t^i-\partial_iA|&=\left|\displaystyle\int_{\Sigma_{(x_1,x_2)}\times\Sigma_{(x_1,x_2)}}(\partial_iV(x)-\partial_iV(x'))\pi(dx,dx')\right|\\
&=\|\nabla_{x_3^n}\partial_iV\|_{L^{\infty}}\sqrt{\displaystyle\int_{\Sigma_{(x_1,x_2)}\times\Sigma_{(x_1,x_2)}}d_{\Sigma_{(x_1,x_2)}}(x,x')^2\pi(dx,dx')}.
\end{align*}
Taking now the infimum over all $\pi\displaystyle\in\prod(\mu(t,.|(x_1,x_2)),\mu(\infty,.|(x_1,x_2)))$ and using Lemma \ref{tala}, we obtain
\begin{align*}
\displaystyle |F_t^i-\partial_iA|&\leq\displaystyle \gamma W(\mu(t,.|(x_1,x_2)),\mu^{\xi}(\infty,.|(x_1,x_2)))\\
&\leq\displaystyle \gamma\sqrt{\frac{2}{\rho}H(\mu^{\xi}(t,.|(x_1,x_2)),\mu^{\xi}(\infty,.|(x_1,x_2)))}\\
&=\displaystyle \gamma\sqrt{\frac{2}{\rho}e_m(t,(x_1,x_2))}.\\
\end{align*}
\end{proof}

\begin{lemma}\label{em}
Suppose that [H2] holds, then for all $t\geq0$,
$$E_m(t)\leq\displaystyle\frac{1}{2\rho}\int_{\mathbb{T}^n}|\nabla_{x_3^n}\ln(\psi(t,.)/\psi_{\infty})|^2\psi.$$
\end{lemma}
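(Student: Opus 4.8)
The plan is to decompose the microscopic entropy $E_m(t)$ fiberwise and bound each conditional relative entropy $e_m(t,x_1,x_2) = H(\mu_{t,x_1,x_2}|\mu_{\infty,x_1,x_2})$ using assumption [H2], namely that each conditional measure $\mu_{\infty,x_1,x_2}$ satisfies $LSI(\rho)$. Applying $LSI(\rho)$ on the submanifold $\Sigma_{(x_1,x_2)}$ to the probability measure $\mu_{t,x_1,x_2}$ yields
\begin{equation*}
e_m(t,x_1,x_2) \leq \frac{1}{2\rho}\, I(\mu_{t,x_1,x_2}|\mu_{\infty,x_1,x_2}),
\end{equation*}
where, as recalled just after Lemma~\ref{entsum}, the Fisher information on the fiber reads
\begin{equation*}
I(\mu_{t,x_1,x_2}|\mu_{\infty,x_1,x_2}) = \int_{\Sigma_{(x_1,x_2)}} |\nabla_{x_3^n}\ln(\psi(t,\cdot)/\psi_\infty)|^2 \, d\mu_{t,x_1,x_2}.
\end{equation*}
Here it matters that the free energy term $A$ in $\psi_\infty = \mathrm{e}^{-\beta(V-A\circ\xi)}$ depends only on $(x_1,x_2)$, so that $\nabla_{x_3^n}\ln\psi_\infty = -\beta\,\nabla_{x_3^n}V$ is indeed the correct ``conditional'' gradient appearing in the logarithmic Sobolev inequality for $\mu_{\infty,x_1,x_2}\propto \mathrm{e}^{-\beta V}dx_3^n$.

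The second step is to integrate this pointwise (in $(x_1,x_2)$) inequality against the marginal $\psi^\xi(t,x_1,x_2)\,dx_1 dx_2$, using the definition \eqref{mient} of $E_m(t)$:
\begin{equation*}
E_m(t) = \int_{\mathbb{T}^2} e_m(t,x_1,x_2)\,\psi^\xi\,dx_1 dx_2 \leq \frac{1}{2\rho}\int_{\mathbb{T}^2}\left(\int_{\Sigma_{(x_1,x_2)}} |\nabla_{x_3^n}\ln(\psi/\psi_\infty)|^2\,d\mu_{t,x_1,x_2}\right)\psi^\xi\,dx_1 dx_2.
\end{equation*}
Finally, recalling from Remark~\ref{condme} that $d\mu_{t,x_1,x_2} = \psi(t,x)\,dx_3^n/\psi^\xi(t,x_1,x_2)$, the factor $\psi^\xi$ cancels, and a use of the co-area / Fubini identity $dx = dx_3^n\,dx_1 dx_2$ collapses the double integral into a single integral over $\mathbb{T}^n$, giving exactly
\begin{equation*}
E_m(t) \leq \frac{1}{2\rho}\int_{\mathbb{T}^n} |\nabla_{x_3^n}\ln(\psi(t,\cdot)/\psi_\infty)|^2\,\psi\,dx.
\end{equation*}

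I do not anticipate a serious obstacle here; this is essentially a bookkeeping argument combining [H2] with the tower-type decomposition of entropy. The only point requiring a little care is making sure that the logarithmic Sobolev inequality is applied to the right pair of measures on the fiber (the conditioned laws $\mu_{t,x_1,x_2}$ and $\mu_{\infty,x_1,x_2}$, not the full laws), and that the gradient in the resulting Fisher information is the partial gradient $\nabla_{x_3^n}$ — which is legitimate precisely because $A\circ\xi$ is constant along $\Sigma_{(x_1,x_2)}$, so it drops out of $\nabla_{x_3^n}\ln(\psi/\psi_\infty)$ is $\nabla_{x_3^n}\ln(\psi \, \mathrm{e}^{\beta V})$, matching the invariant measure of the fiber dynamics.
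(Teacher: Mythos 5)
Your proposal is correct and follows essentially the same route as the paper: decompose $E_m$ fiberwise, apply $LSI(\rho)$ from [H2] to each conditional pair $(\mu_{t,x_1,x_2},\mu_{\infty,x_1,x_2})$ to bound $e_m$ by the conditional Fisher information, and then use the expression of that Fisher information in terms of $\nabla_{x_3^n}\ln(\psi/\psi_\infty)$ together with Fubini to collapse to a single integral over $\mathbb{T}^n$. Your extra observation that $A\circ\xi$ is constant along each fiber, so the partial gradient $\nabla_{x_3^n}$ is the right one, is a valid clarification of a point the paper leaves implicit.
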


\begin{proof}
Using [H2],
\begin{align*}
E_m&=\displaystyle\int_{\mathbb{T}^2}e_m\psi^{\xi}dx_1dx_2\\
&=\displaystyle\int_{\mathbb{T}^2}H(\mu(t,.|(x_1,x_2))|\mu(\infty,.|(x_1,x_2)))\psi^{\xi}dx_1dx_2\\
&\leq \displaystyle\int_{\mathbb{T}^2}\frac{1}{2\rho}\int_{\Sigma_{x_1^2}}|\nabla_{x_3^n}\ln(\psi(t,.)/\psi_{\infty})|^2dx_3^n\frac{\psi(t,.)}{\psi^{\xi}(t,x_1,x_2)}dx_1dx_2\\
&=\displaystyle\frac{1}{2\rho}\int_{\mathbb{T}^n}|\nabla_{x_3^n}\ln(\psi(t,.)/\psi_{\infty})|^2\psi dx_1^n.\\
\end{align*}
\end{proof}

\begin{lemma}\label{neg}
It holds for all $t\geq0$,
$$\displaystyle\int_{\mathbb{T}^n}(\partial_1A_t-F_t^1)[\partial_1\ln(\psi/\psi_{\infty})]\psi+\int_{\mathbb{T}^n}(\partial_2A_t-F_t^2)[\partial_2\ln(\psi/\psi_{\infty})]\psi\leq 0.$$
\end{lemma}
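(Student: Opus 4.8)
The plan is to collapse the integral over $\mathbb{T}^n$ to an integral over $\mathbb{T}^2$ and then to invoke twice the variational characterization \eqref{sca} of $A_t$. Recall that, by \eqref{pabf}, \eqref{projop} and Proposition~\ref{propmin}, the function $A_t$ solves \eqref{sca} with weight $\varphi=\psi^\xi(t,\cdot)$, that is $\int_{\mathbb{T}^2}(\nabla_{x_1^2}A_t-F_t)\cdot\nabla_{x_1^2}g\,\psi^\xi=0$ for every $g\in H^1(\mathbb{T}^2)/\mathbb{R}$. The key structural fact is that $\partial_iA_t-F_t^i$ depends only on $(x_1,x_2)$, so that integrating it against $\partial_i\ln(\psi/\psi_\infty)\,\psi$ over the fibre $\Sigma_{(x_1,x_2)}$ produces only quantities attached to $\psi^\xi$.

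First I would use Fubini together with the fact that $\partial_iA_t-F_t^i$ is constant along the fibres:
\[
\sum_{i=1}^2\int_{\mathbb{T}^n}(\partial_iA_t-F_t^i)\,[\partial_i\ln(\psi/\psi_\infty)]\,\psi
=\sum_{i=1}^2\int_{\mathbb{T}^2}(\partial_iA_t-F_t^i)\Big(\int_{\Sigma_{(x_1,x_2)}}\partial_i\ln(\psi/\psi_\infty)\,\psi\,dx_3^n\Big).
\]
Multiplying the identity of Lemma~\ref{dif} by $\psi^\xi$ and using $\psi^\xi_\infty=1$ (so that $\psi^\xi\,\partial_i\ln\psi^\xi=\partial_i\psi^\xi$) gives $\int_{\Sigma_{(x_1,x_2)}}\partial_i\ln(\psi/\psi_\infty)\,\psi\,dx_3^n=(F_t^i-\partial_iA)\,\psi^\xi+\partial_i\psi^\xi$. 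Substituting, the left-hand side becomes
\[
\int_{\mathbb{T}^2}(\nabla_{x_1^2}A_t-F_t)\cdot\nabla_{x_1^2}\psi^\xi
\;+\;\sum_{i=1}^2\int_{\mathbb{T}^2}(\partial_iA_t-F_t^i)(F_t^i-\partial_iA)\,\psi^\xi .
\]

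Then I would eliminate both terms. For the first one, write $\nabla_{x_1^2}\psi^\xi=\psi^\xi\,\nabla_{x_1^2}\ln\psi^\xi$ and apply \eqref{sca} with $g=\ln\psi^\xi-\int_{\mathbb{T}^2}\ln\psi^\xi$; this is an admissible element of $H^1(\mathbb{T}^2)/\mathbb{R}$ because, by Proposition~\ref{diff} and Remark~\ref{rmdif}, $\psi^\xi(t,\cdot)$ is smooth and strictly positive for $t>0$. Hence it vanishes. For the second term, decompose $F_t^i-\partial_iA=(\partial_iA_t-\partial_iA)-(\partial_iA_t-F_t^i)$ to get
\[
\sum_{i=1}^2\int_{\mathbb{T}^2}(\partial_iA_t-F_t^i)(F_t^i-\partial_iA)\,\psi^\xi
=-\int_{\mathbb{T}^2}|\nabla_{x_1^2}A_t-F_t|^2\psi^\xi
+\int_{\mathbb{T}^2}(\nabla_{x_1^2}A_t-F_t)\cdot\nabla_{x_1^2}(A_t-A)\,\psi^\xi,
\]
and the last integral vanishes by \eqref{sca} with $g=A_t-A$ (normalising the additive constant of $A$ so that $\int_{\mathbb{T}^2}A=0$; this $g$ lies in $H^1(\mathbb{T}^2)/\mathbb{R}$ since $A_t\in H^2(\mathbb{T}^2)$ by Proposition~\ref{propmin} and $A$ is smooth because $V\in C^2$). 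Collecting the pieces yields the sharper identity $\sum_{i=1}^2\int_{\mathbb{T}^n}(\partial_iA_t-F_t^i)[\partial_i\ln(\psi/\psi_\infty)]\psi=-\int_{\mathbb{T}^2}|\nabla_{x_1^2}A_t-F_t|^2\psi^\xi\le 0$, which is even stronger than the claimed inequality.

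The only genuine difficulty is conceptual: one must recognise that, once the fibre integration is performed, both surviving contributions have exactly the form $\int_{\mathbb{T}^2}(\nabla_{x_1^2}A_t-F_t)\cdot\nabla_{x_1^2}g\,\psi^\xi$ for admissible test functions $g$ ($g=\ln\psi^\xi$ and $g=A_t-A$), so the Euler--Lagrange equation \eqref{sca} of the weighted Helmholtz projection forces them to be zero. The remaining bookkeeping — that $\ln\psi^\xi$ and $A_t-A$ belong to $H^1(\mathbb{T}^2)/\mathbb{R}$ — is routine and rests on the smoothing of the heat equation \eqref{dif2} for $\psi^\xi$ and on elliptic regularity for $A_t$.
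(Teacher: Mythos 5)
Your proof is correct and follows essentially the same route as the paper: Fubini to reduce to an integral over $\mathbb{T}^2$, the fibre identity of Lemma~\ref{dif} to rewrite $\int_{\Sigma_{(x_1,x_2)}}\partial_i\ln(\psi/\psi_\infty)\psi$, and the Euler--Lagrange equation \eqref{sca} with weight $\psi^{\xi}(t,\cdot)$ to kill the surviving terms. You are in fact slightly more explicit than the paper, which passes directly to $-\int_{\mathbb{T}^2}(\partial_iA_t-F_t^i)^2\psi^{\xi}$ and thereby silently absorbs the cross term $\int_{\mathbb{T}^2}(\nabla_{x_1^2}A_t-F_t)\cdot\nabla_{x_1^2}(A_t-A)\,\psi^{\xi}=0$; your second application of \eqref{sca} with $g=A_t-A$ makes that step transparent and yields the sharper identity that the left-hand side equals $-\int_{\mathbb{T}^2}|\nabla_{x_1^2}A_t-F_t|^2\psi^{\xi}$.
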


\begin{proof}
Using Fubini,
\begin{align*}
&\displaystyle\int_{\mathbb{T}^n}(\partial_1A_t-F_t^1)[\partial_1\ln(\psi/\psi_{\infty})]\psi+\int_{\mathbb{T}^n}(\partial_2A_t-F_t^2)[\partial_2\ln(\psi/\psi_{\infty})]\psi\\
&=\displaystyle\int_{\mathbb{T}^2}(\partial_1A_t-F_t^1)\int_{\Sigma_{(x_1,x_2)}}[\partial_1\ln(\psi/\psi_{\infty})]\psi+\int_{\mathbb{T}^2}(\partial_2A_t-F_t^2)\int_{\Sigma_{(x_1,x_2)}}[\partial_2\ln(\psi/\psi_{\infty})]\psi.
\end{align*}
For the first term, we have
\begin{align*}
\displaystyle\int_{\Sigma_{(x_1,x_2)}}[\partial_1\ln(\psi/\psi_{\infty})]\psi&=\displaystyle\int_{\Sigma_{(x_1,x_2)}}(\partial_1\ln\psi)\psi-\int_{\Sigma_{(x_1,x_2)}}(\partial_1\ln\psi_{\infty})\psi\\
&=\partial_1\psi^{\xi}+\int_{\Sigma_{(x_1,x_2)}}\partial_1(V-A)\psi\\
&=(\partial_1\ln\psi^{\xi})\psi^{\xi}+F_t^1\psi^{\xi}-\partial_1A\psi^{\xi}.
\end{align*}
Similarly, we have
$$\displaystyle\int_{\Sigma_{(x_1,x_2)}}[\partial_2\ln(\psi/\psi_{\infty})]\psi=(\partial_2\ln\psi^{\xi})\psi^{\xi}+F_t^2\psi^{\xi}-\partial_2A\psi^{\xi}.$$
Therefore, one gets
\begin{align*}
&\displaystyle\int_{\mathbb{T}^n}(\partial_1A_t-F_t^1)[\partial_1\ln(\psi/\psi_{\infty})]\psi+\int_{\mathbb{T}^n}(\partial_2A_t-F_t^2)[\partial_2\ln(\psi/\psi_{\infty})]\psi\\
%&=\displaystyle\int_{\mathbb{T}^2}(\partial_1A_t-F_t^1)\left[(\partial_1\ln\psi^{\xi})\psi^{\xi}+F_t^1\psi^{\xi}-\partial_1A\psi^{\xi}\right]\\
&=\displaystyle\int_{\mathbb{T}^2}(\partial_1A_t-F_t^1)(\partial_1\ln\psi^{\xi})\psi^{\xi}+\int_{\mathbb{T}^2}(\partial_2A_t-F_t^2)(\partial_2\ln\psi^{\xi})\psi^{\xi}\\
&\quad -\displaystyle\int_{\mathbb{T}^2}(\partial_1A_t-F_t^1)^2\psi^{\xi}-\int_{\mathbb{T}^2}(\partial_2A_t-F_t^2)^2\psi^{\xi}
\end{align*}
which concludes the assertion since the first line is equal to zero (by \eqref{sca} with $\varphi=\psi^{\xi}(t,.)$) and the second line is non positive. Again the weighted Helmholtz decomposition helps in simplifying terms.
\end{proof}

\noindent\textbf{Proof of Theorem~\ref{theo1}:}\\

Now we will prove the exponentially convergence of $E_m(t)$. Recall \eqref{reform}:
$$\partial_t\psi=\displaystyle{\rm div}(\nabla V\psi+\nabla\psi)-\partial_1((\partial_1A_t)\psi)-\partial_2((\partial_2A_t)\psi),$$
 which is equivalent to
$$\partial_t\psi=\displaystyle{\rm div}\left(\psi_{\infty}\nabla\left(\frac{\psi}{\psi_{\infty}}\right)\right)+\partial_1[(\partial_1A-\partial_1A_t)\psi]+\partial_2[(\partial_2A-\partial_2A_t)\psi].$$
Using \eqref{tent}, \eqref{maent} and \eqref{diff2}, one obtains
\begin{align*}
\displaystyle\frac{dE}{dt}&=-\displaystyle\int_{\mathbb{T}^n}|\nabla\ln(\psi/\psi_{\infty})|^2\psi +\int_{\mathbb{T}^n}\!(\partial_1A_t-\partial_1A)[\partial_1\ln(\psi/\psi_{\infty})]\psi\\
&\quad+ \displaystyle\int_{\mathbb{T}^n}\!(\partial_2A_t-\partial_2A)[\partial_2\ln(\psi/\psi_{\infty})]\psi,\\
\displaystyle\frac{dE_M}{dt}&=-\displaystyle\int_{\mathbb{T}^2}|\nabla\ln(\psi^{\xi})|^2\psi^{\xi}.
\end{align*}
\noindent Using then Lemma \ref{entsum} and Lemma \ref{dif}, one gets
\begin{align*}
\displaystyle\frac{dE_m}{dt}&=\displaystyle\frac{dE}{dt}-\displaystyle\frac{dE_{M}}{dt}\\
&=-\displaystyle\int_{\mathbb{T}^n}|\nabla\ln(\psi/\psi_{\infty})|^2\psi+\int_{\mathbb{T}^n}\!(\partial_1A_t-\partial_1A)\partial_1\ln(\psi/\psi_{\infty})\psi\\
&\quad+ \displaystyle\int_{\mathbb{T}^n}\!(\partial_2A_t-\partial_2A)\partial_2\ln(\psi/\psi_{\infty})\psi +\displaystyle\int_{\mathbb{T}^2}|\partial_1\ln\psi^{\xi}|^2\psi^{\xi}+\int_{\mathbb{T}^2}|\partial_2\ln\psi^{\xi}|^2\psi^{\xi}\\
&=-\displaystyle\int_{\mathbb{T}^n}|\nabla\ln(\psi/\psi_{\infty})|^2\psi\\
&\quad \displaystyle+\int_{\mathbb{T}^n}\!(\partial_1A_t-F_t^1)[\partial_1\ln(\psi/\psi_{\infty})]\psi+ \int_{\mathbb{T}^n}\!(F_t^1-\partial_1A)[\partial_1\ln(\psi/\psi_{\infty})]\psi\\
&\quad \displaystyle+\int_{\mathbb{T}^n}\!(\partial_2A_t-F_t^2)[\partial_2\ln(\psi/\psi_{\infty})]\psi+ \int_{\mathbb{T}^n}\!(F_t^2-\partial_2A)[\partial_2\ln(\psi/\psi_{\infty})]\psi\\
&\quad +\displaystyle\int_{\mathbb{T}^2}|\partial_1\ln\psi^{\xi}|^2\psi^{\xi}+\int_{\mathbb{T}^2}|\partial_2\ln\psi^{\xi}|^2\psi^{\xi}.
\end{align*}
\noindent Lemma \ref{neg} then yields
\begin{align*}
\displaystyle\frac{dE_m}{dt}&\leq-\displaystyle\int_{\mathbb{T}^n}|\nabla\ln(\psi/\psi_{\infty})|^2\psi\\
&\quad \displaystyle+\int_{\mathbb{T}^n}(F_t^1-\partial_1A)\partial_1\ln(\psi/\psi_{\infty})\psi+\int_{\mathbb{T}^n}\!(F_t^2-\partial_2A)\partial_2\ln(\psi/\psi_{\infty})\psi \\
&\quad +\displaystyle\int_{\mathbb{T}^2}|\partial_1\ln\psi^{\xi}|^2\psi^{\xi}+\int_{\mathbb{T}^2}|\partial_2\ln\psi^{\xi}|^2\psi^{\xi}.
\end{align*}
Using lemma \ref{dif} and Fubini, one then obtains
\begin{align*}
\displaystyle\frac{dE_m}{dt}&\leq-\displaystyle\int_{\mathbb{T}^n}|\nabla\ln(\psi/\psi_{\infty})|^2\psi\\
&\quad +\displaystyle\int_{\mathbb{T}^2}\left[\int_{\Sigma_{(x_1,x_2)}}\!\!\!\!\!\!\!\!\!\!\partial_1\ln(\psi/\psi_{\infty})\frac{\psi}{\psi^{\xi}}\right]\int_{\Sigma_{(x_1,x_2)}}\!\!\!\!\!\!\!\!\!\!\partial_1\ln(\psi/\psi_{\infty})\psi
-\displaystyle\int_{\mathbb{T}^n}\!\!\!\!\partial_1\ln(\psi^{\xi})\partial_1\ln(\psi/\psi_{\infty})\psi\\
&\quad +\displaystyle\int_{\mathbb{T}^2}\left[\int_{\Sigma_{(x_1,x_2)}}\!\!\!\!\!\!\!\!\!\!\partial_2\ln(\psi/\psi_{\infty})\frac{\psi}{\psi^{\xi}}\right]\int_{\Sigma_{(x_1,x_2)}}\!\!\!\!\!\!\!\!\!\!\partial_2\ln(\psi/\psi_{\infty})\psi
-\displaystyle\int_{\mathbb{T}^n}\!\!\!\!\partial_2\ln(\psi^{\xi})\partial_2\ln(\psi/\psi_{\infty})\psi\\
&\quad +\displaystyle\int_{\mathbb{T}^2}|\partial_1\ln\psi^{\xi}|^2\psi^{\xi}+\int_{\mathbb{T}^2}|\partial_2\ln\psi^{\xi}|^2\psi^{\xi}\\
&\leq -\displaystyle\int_{\mathbb{T}^n}|\nabla\ln(\psi/\psi_{\infty})|^2\psi\\
&\quad +\displaystyle\int_{\mathbb{T}^2}\left[\int_{\Sigma_{(x_1,x_2)}}\partial_1\ln(\psi/\psi_{\infty})\psi\right]^2\frac{1}{\psi^{\xi}}-\displaystyle\int_{\mathbb{T}^n}\partial_1\ln(\psi^{\xi})\partial_1\ln(\psi/\psi_{\infty})\psi\\
&\quad +\displaystyle\int_{\mathbb{T}^2}\left[\int_{\Sigma_{(x_1,x_2)}}\partial_2\ln(\psi/\psi_{\infty})\psi\right]^2\frac{1}{\psi^{\xi}}-\displaystyle\int_{\mathbb{T}^n}\partial_2\ln(\psi^{\xi})\partial_2\ln(\psi/\psi_{\infty})\psi\\
&\quad +\displaystyle\int_{\mathbb{T}^2}|\partial_1\ln\psi^{\xi}|^2\psi^{\xi}+\int_{\mathbb{T}^2}|\partial_2\ln\psi^{\xi}|^2\psi^{\xi}.
\end{align*}
Applying Cauchy-Schwarz on the first terms of the second and third lines, we obtain
\begin{align*}
\displaystyle\frac{dE_m}{dt}&\leq -\displaystyle\int_{\mathbb{T}^n}|\nabla_{x_3^n}\ln(\psi/\psi_{\infty})|^2\psi\\
&\quad -\displaystyle\int_{\mathbb{T}^n}\partial_1\ln(\psi^{\xi})\partial_1\ln(\psi/\psi_{\infty})\psi-\int_{\mathbb{T}^n}\partial_2\ln(\psi^{\xi})\partial_2\ln(\psi/\psi_{\infty})\psi\\
&\quad +\displaystyle\int_{\mathbb{T}^2}|\partial_1\ln\psi^{\xi}|^2\psi^{\xi}+\int_{\mathbb{T}^2}|\partial_2\ln\psi^{\xi}|^2\psi^{\xi}\\
&\leq-\displaystyle\int_{\mathbb{T}^n}|\nabla_{x_3^n}\ln(\psi/\psi_{\infty})|^2\psi-\displaystyle\int_{\mathbb{T}^2}\partial_1\ln(\psi^{\xi})\left[\int_{\Sigma_{(x_1,x_2)}}\!\!\!\!\!\!\!\!\!\!\partial_1\ln(\psi/\psi_{\infty})\frac{\psi}{\psi^{\xi}}-\partial_1\ln\psi^{\xi}\right]\psi^{\xi}\\
&\quad -\displaystyle\int_{\mathbb{T}^2}\partial_2\ln(\psi^{\xi})\left[\int_{\Sigma_{(x_1,x_2)}}\partial_2\ln(\psi/\psi_{\infty})\frac{\psi}{\psi^{\xi}}-\partial_2\ln\psi^{\xi}\right]\psi^{\xi}.
\end{align*}
Applying Lemma~\ref{em}, Lemma~\ref{dif}, Cauchy-Schwarz, Lemma~\ref{difabs} and Corollary~\ref{fishco},
\begin{align*}
\displaystyle\frac{dE_m}{dt}&\leq -2\rho E_m+\displaystyle\sqrt{\int_{\mathbb{T}^2}|\partial_1\ln(\psi^{\xi})|^2\psi^{\xi}}\sqrt{\int_{\mathbb{T}^2}\frac{2}{\rho}e_m(t,(x_1,x_2))\psi^{\xi}}\\
&\quad +\displaystyle\sqrt{\int_{\mathbb{T}^2}|\partial_2\ln(\psi^{\xi})|^2\psi^{\xi}}\sqrt{\int_{\mathbb{T}^2}\frac{2}{\rho}e_m(t,(x_1,x_2))\psi^{\xi}}\\
&\leq \displaystyle-2\rho E_m+2\gamma\sqrt{\frac{2}{\rho}E_m}\sqrt{\int_{\mathbb{T}^2}|\nabla_{x_1^2}\ln(\psi^{\xi})|^2\psi^{\xi}}\\
&\leq\displaystyle-2\rho E_m+2\gamma\sqrt{\frac{2}{\rho}E_m}\sqrt{I(\psi^{\xi}/\psi^{\xi}_{\infty})}\\
&\leq\displaystyle-2\rho E_m+2\gamma\sqrt{\frac{2}{\rho}E_m}\sqrt{I_0}e^{-4\pi^2t}.
\end{align*}
Finally we obtain
\begin{align*}
\frac{d}{dt}\sqrt{E_m(t)}&\leq\displaystyle-\rho\sqrt{E_m(t)}+\gamma\sqrt{\frac{I_0}{2\rho}}e^{-4\pi^2t}.
\end{align*}
First, if $\rho\neq 4\pi^2$, using Gronwall inequality, one obtains
\begin{align*}
\sqrt{E_m(t)}&\leq\sqrt{E_m(0)}\,\displaystyle e^{-\rho t}+\gamma\sqrt{\frac{I_0}{2\rho}}\displaystyle\int_0^te^{\rho(-t+s)}e^{-4\pi^2s}ds\\
&\leq \displaystyle\sqrt{E_m(0)}\,e^{-\rho t}+\gamma\sqrt{\frac{I_0}{2\rho}}\frac{e^{-\rho t}}{\rho-4\pi^2}\left(e^{(\rho-4\pi^2)t}-1\right)\\
&\leq \displaystyle\sqrt{E_m(0)}\,e^{-\rho t}+\gamma\sqrt{\frac{I_0}{2\rho}}\frac{e^{-\rho t}}{|\rho-4\pi^2|}e^{(\rho-4\pi^2)t}.
\end{align*}
    \begin{align*}
\sqrt{E_m(t)}&\leq\displaystyle\sqrt{E_m(0)}\,e^{-\rho t}+\gamma\sqrt{\frac{I_0}{2\rho}}\displaystyle\int_0^te^{-\rho t}ds\\
&\leq \displaystyle\left(\sqrt{E_m(0)}+\gamma\sqrt{\frac{I_0}{2\rho}}t\right)e^{-\rho t},
\end{align*}
which leads the desired estimation \eqref{Em}.
  
Using this above convergence, Corollary \ref{mic} and Lemma \ref{entsum}, it is then easy to see that $E$ converges exponentially fast to zero. Using \eqref{kul}, one obtains the convergence of $\psi$ to $\psi_{\infty}$ since:
$$\|\psi-\psi_{\infty}\|_{L^1({\mathbb{T}^n})}\leq\sqrt{2H(\psi|\psi_{\infty})}=\sqrt{2E}.$$
The second point of the theorem is checked. Finally, we are now in position to prove the last point of Theorem \ref{theo1}. Using \eqref{minw} and Lemma \ref{difabs},
\begin{align*}
\|\nabla A_t-\nabla A\|_{L^2_{\psi^{\xi}}(\mathbb{T}^2)}^2&\leq 2\|\nabla A_t-F_t\|_{L^2_{\psi^{\xi}}(\mathbb{T}^2)}^2+2\|F_t-\nabla A\|_{L^2_{\psi^{\xi}}(\mathbb{T}^2)}^2\\
&\leq 4\|F_t-\nabla A\|_{L^2_{\psi^{\xi}}(\mathbb{T}^2)}^2\\
&\leq 8\frac{\gamma^2}{\rho}E_m.
\end{align*}

\section*{Acknowledgment}
The authors thank Gabriel Stoltz for very helpful discussions on the numerical experiments. Houssam Alrachid would like to thank the Ecole des Ponts ParisTech and CNRS Libanais for supporting his PHD thesis. The work of Tony Leli\`evre is supported by the European Research Council under the European Union's Seventh Framework Programme (FP/2007-2013) / ERC Grant Agreement number 614492.

\end{document}